\newtheorem{theorem}{Theorem}[section]
\newtheorem{lemma}[theorem]{Lemma}
\newtheorem{remark}[theorem]{Remark}
\newtheorem{assumption}[theorem]{Assumption}
\numberwithin{equation}{section}
\newcommand{\Norm}[1]{{\left\|{#1} \right\|}}
\newcommand{\SemiNorm}[1]{{\left|{#1} \right|}}
\newcommand{\jump}[1]{\left[\!\left[#1\right]\!\right]}
\newcommand{\calT}{\mathcal{T}}
\newcommand{\calC}{\mathcal{C}}
\newcommand{\calTh}{\calT_h}
\newcommand{\calE}{\mathcal{E}}
\newcommand{\calEh}{\calE_h}
\newcommand{\calEhI}{\calE_h^I}
\newcommand{\calEhB}{\calE_h^B}
\newcommand{\calN}{\mathcal{N}}
\newcommand{\calNh}{\calN_h}
\newcommand{\calNha}{\calN_h^a}
\newcommand{\calNhb}{\calN_h^b}
\newcommand{\EcalE}{\mathcal{E}^{\E}}
\newcommand{\EcalEc}{\EcalE_{c}}
\newcommand{\EcalEs}{\EcalE_{s}}
\newcommand{\calM}{\mathcal{M}}
\newcommand{\Vh}{{V}_h}
\newcommand{\VhE}{\Vh(\E)}
\newcommand{\Vho}{V_h^0}
\newcommand{\Rbb}{\mathbb{R}}
\newcommand{\Pbb}{\mathbb{P}}
\newcommand{\Nbb}{\mathbb{N}}
\newcommand{\E}{K}
\newcommand{\e}{e}
\newcommand{\hE}{h_\E}
\newcommand{\he}{h_\e}
\newcommand{\Ie}{I_\e}
\newcommand{\xbm}{\bm{x}}
\newcommand{\bbm}{\bm{b}}
\newcommand{\xibm}{\bm{\xi}}
\newcommand{\qbm}{\bm{q}}
\newcommand{\qbmk}{\qbm_{k-1}}
\newcommand{\Dbm}{\bm{D}}
\newcommand{\drm}{\mathrm{d}}
\newcommand{\dx}{\drm\xbm}
\newcommand{\ds}{\drm s}
\newcommand{\nbf}{\mathbf n}
\newcommand{\nbfE}{\nbf_\E}
\newcommand{\nbfEp}{\nbf_{\E^+}}
\newcommand{\nbfEm}{\nbf_{\E^-}}
\newcommand{\nbfe}{\nbf_\e}
\newcommand{\eh}{e_h}
\newcommand{\vh}{v_h}
\newcommand{\vpi}{v_\pi}
\newcommand{\vI}{v_I}
\newcommand{\uh}{u_h}
\newcommand{\upi}{u_\pi}
\newcommand{\uI}{u_I}
\newcommand{\ah}{a_h}\newcommand{\bh}{b_h}\newcommand{\ch}{c_h}\newcommand{\Bh}{B_h}
\newcommand{\ahE}{\ah^{\E}}\newcommand{\bhE}{\bh^{\E}}\newcommand{\chE}{\ch^{\E}}\newcommand{\BhE}{\Bh^{\E}}
\newcommand{\aE}{a^{\E}}\newcommand{\bE}{b^{\E}}\newcommand{\cE}{c^{\E}}
\newcommand{\SE}{S^{\E}}
\newcommand{\Pbbtilde}{\widetilde \Pbb}
\newcommand{\boldalpha}{\boldsymbol \alpha}
\newcommand{\Pitilde}{\widetilde \Pi}
\newcommand{\Pitildenabla}{\Pitilde_{k}^{\nabla,\E}}
\newcommand{\Pitildeze}{\Pitilde_{k-1}^{0,\e}}
\newcommand{\Pitildeo}{\Pitilde_{k-1}^{0,\E}}
\newcommand{\Pitildeoh}{\Pitilde_{k-1}^{0,h}}
\newcommand{\Pik}{\Pi_{k-1}^{0,\E}}
\newcommand{\Pikh}{\Pi_{k-1}^{0,h}}
\newcommand{\qtilde}{\widetilde q}
\newcommand{\EEk}{\calE_{\E}^{k-1}}
\newcommand{\Etildek}{\widetilde{{\calE}}_{\E}^{k-1}}
\newcommand{\qn}{q_n}
\newcommand{\vIE}{v_I^\E}
\newcommand{\PipartialE}{\Pi_0^{0,\partial\E}}
\title{Nonconforming virtual element method for general second-order
elliptic problems on curved domain}
\author{\normalsize{
Yi Liu\thanks{Dipartimento di Matematica e Applicazioni, Università degli Studi di Milano-Bicocca, 20125 Milano, Italy,
\tt{yi.liu1@unimib.it, alessandro.russo@unimib.it}},
Alessandro Russo\footnotemark[1]~\thanks{IMATI-CNR, 27100, Pavia, Italy}}}
\date{}
\begin{document}

\maketitle
\begin{abstract}
\noindent This paper introduces a nonconforming virtual element method for  general second-order elliptic problems with variable coefficients on domains with curved boundaries and curved internal interfaces. 
We prove arbitrary order optimal convergence in the energy and~$L^2$ norms, 
confirmed by numerical experiments on a set of polygonal meshes. 
The accuracy of the numerical approximation provided by the method is shown to be comparable with the theoretical analysis.
\end{abstract}

\section{Introduction}
The present work proposes a nonconforming virtual element method for meshes with elements having curved edges for general second-order elliptic problems. 
The method allows to handle domains with curved boundaries or interfaces. To illustrate the main idea, we consider the Drichlet problem for second-order elliptic equations which seek an unknown function~$u$ satisfying 

\begin{equation}\label{eqn:primal problem}
 \left\{
 \begin{aligned}
 & -\nabla\cdot (a\nabla u)+\nabla\cdot(\bbm u)+c u = f \quad \text{in}~\Omega,\\
 &u=g\quad\text{on}~\partial\Omega,
 \end{aligned}\right.
\end{equation}
where $\Omega$ is a domain with curved boundaries or interfaces in~$\Rbb^d$ ($d=2,~3$), 
~$a = (a_{i,j}(\xbm))\in [L^{\infty}(\Omega)]^{d^2}$ is a symmetric matrix-valued function,~$\bbm= (b_i(\xbm))_{d\times1}$ is a vector-valued function, and $c= c(\xbm)$ is a scalar function on~$\Omega$. Assume that the matrix $a$ is strongly elliptic: there exists $a_0>0$ such that 
\[a_0\xibm^T\xibm \leq \xibm^T a\xibm, \quad \forall \xibm\in \Rbb^d.\] 

Dealing with curved boundaries for approximation degrees greater than one has some pitfalls.
Indeed, the discrepancy between the curved physical domain and the polygonal approximation domain leads to a loss of accuracy for
discretization with polynomial order~$k\geq 2$~\cite{Strang-Berger:1973,Thomee:1973}. 
To address this issue, 
one straightforward approach is to reduce the geometric error down to the same level of the approximation error. 
Popular methods following this track include the isoparametric finite element method~\cite{Ergatoudis-Irons-Zienkiewicz:1968, Lenoir:1986}, 
which requires a polynomial approximation of the curved boundary and careful selection of the isoparametric nodes; another notable approach applicable to CAD domains is Isogeometric Analysis~\cite{Cottrell-Hughes-Bazilevs:2009}.

Compared to traditional finite element methods on simplex, in this work, 
we research a numerical method on general polygonal and polyhedral meshes. 
Indeed, it is evident that with polytopal elements one can easily handle hanging nodes, movable meshes and adaptivity in an automatic manner. 
In the last decade, the curved edge discretizations have been investigated in the context of polytopal methods. 
Including the extended hybridizable discontinuous Galerkin method~\cite{Gurkan-SalaLardies-Kronbichler-FernandezMandez:2016};
the unfitted hybrid high-order method~\cite{Burman-Ern:2019,Burman-Cicuttin-Delay-Ern:2021};
the hybrid high-order method for the Poisson~\cite{Botti-DiPietro:2018, Yemm:2023}
and (singularly perturbed) fourth order problems~\cite{Dong-Ern:2022};
the Trefftz-based finite element method~\cite{Anand-Ovall-Reynolds-Weisser:2020};
the weak Galerkin method~\cite{liu-chen-wang:2022}. 

We then present a brief overview of the virtual finite element method. The original virtual element method~\cite{BeiradaVeiga-Brezzi-Cangiani-Manzini-Marini-Russo:2013,BeiradaVeiga-Brezzi-Marini-Russo:2014} is a~$\calC^0$-conforming method for solving the two-dimensional Poisson equation. 
A nonconforming counterpart for solving the same problem was presented in~\cite{AyusodeDios-Lipnikov-Manzini:2016}. 
The extension of these methods to general elliptic problems with variable coefficients is nontrivial. 
While an explicit representation of basis functions is not required, a crucial component of the former schemes is the Galerkin projection operator.
In contrast, the conforming and nonconforming virtual element approaches for general elliptic problems~\cite{Beirao-Brezzi-Marini-Russo:2016} and~\cite{cangiani-Manzini-Sutton:2017} use the~$L^2$-projection operators as designed in~\cite{A-A-B-M-R:2013}.

The virtual element methods have been developed to handle curved edges. In~\cite{BeiraodaVeiga-Russo-Vacca:2019}, the approach is defined (in 2D) directly on curved mesh elements, with the restrictions of their basis functions on curved edges are mapped polynomials. 
A modification proposed later~\cite{Beirao-Brezzi-Marini-Russo:2020}, applicable only to the 2D case, uses the restriction of physical polynomials to define shape functions on curved boundary edges. 
In~\cite{Bertoluzza-Pennacchio-Prada:2019}, the boundary correction technique~\cite{Bramble-Dupont-Thomee:1972} was applied to the virtual element method. 
The idea is to use normal-directional Taylor expansion, in most cases just a linear approximation, to correct function values on the boundary.
 The ideas of~\cite{BeiraodaVeiga-Russo-Vacca:2019} have been generalized to the approximation of solutions to the wave equations. 
 Mixed virtual element methods on curved domains were proposed in~\cite{Dassi-Fumagalli-Losapio-Scialo-Scotti-Vacca:2021}~(2D) and~\cite{Dassi-Fumagalli-Scotti-Vacca:2022}~(3D), respectively.
Additionally, we mention an earlier virtual element method on surfaces~\cite{Frittelli-Sgura:2018, Frittelli-Madzvamuse-Sgura:2021}.

Here, we extend the nonconforming virtual element method to curved edge elements and general second-order elliptic problems,
following the approach proposed in~\cite{AyusodeDios-Lipnikov-Manzini:2016} and \cite{Beirao-Brezzi-Marini-Russo:2016}. 
When the curved boundaries happen to be straight, the proposed virtual space reduces to that in~\cite{AyusodeDios-Lipnikov-Manzini:2016}. 
Although the nonconforming virtual element method for general linear elliptic problems with variable coefficients was designed in~\cite{cangiani-Manzini-Sutton:2017}, 
the bilinear forms presented in this paper differ from those in previous work, 
specifically addressing curved edges.

The paper is organized as follows.
In Section~\ref{section:modelproblem}, we introduce the model problem and outline some notations and hypotheses on curved edges. 
Section~\ref{section:Meshes and broken spaces} presents the mesh assumptions, introduces the discrete spaces along with the associated degrees of freedom, defines the discrete bilinear forms, and formulates the discrete problem.
In Section~\ref{section:Polynomial and virtual element approximation estimates}, we rigorously analyze the theoretical properties of the proposed projection and interpolation operators.
In Section~\ref{section:stability},
 we prove the existence and uniqueness for nonconforming virtual element approximations. 
 In Section~\ref{section:Convergence analysis}, we recover the optimal order of convergence in the energy and $L^2$ norms for the proposed method.
 Details on the 3D version of the method are discussed
in Section~\ref{section:3D}.
Finally, numerical results are presented in Section~\ref{section:Numerical experiments}.

\section{Notation and preliminaries}\label{section:modelproblem}
In this section, we first introduce the standard notation for Sobolev spaces and norms~\cite{Adams:2003}. 
Given an open Lipschitz domain~$D\subset \Rbb^d$,~$d\in\Nbb$, 
the standard norms in the space~$W^{m,p}(\Omega)$ and~$L^p(D)$ are denoted by~$\Norm{\cdot}_{W^{m,p}(D)}$ and~$\Norm{\cdot}_{L^p(D)}$ respectively. 
Norm and seminorm in~$H^m(D)$ are denoted respectively by~$\Norm{\cdot}_{m,D}$ and~$\SemiNorm{\cdot}_{m,D}$, 
while~$(\cdot,\cdot)_{D}$ and~$\Norm{\cdot}_{D}$ denote the~$L^2$-inner product and~$L^2$-norm.   
Sobolev spaces of negative order can be defined by duality.
The notation $\langle\cdot,\cdot\rangle$ stands for the duality pairing~$H^{-\frac12} - H^{\frac12}$ on a given domain.

Next, according to integration by parts, the weak formulation of problem~\eqref{eqn:the weak form} is given as follows: 

seeks~$u\in H^1(\Omega)$ such that~$u=g$ on~$\partial\Omega$ and
 \begin{equation}\label{eqn:weak problem}
 \begin{aligned}
 a(u,v)+ b(u,v)+ c(u,v)  = F(v)\quad \forall v\in H_0^1(\Omega). 
 \end{aligned}
\end{equation}
where the bilinear and linear forms are defined as
\begin{equation}\label{eqn:bilinear terms}
 \begin{aligned}
 &a(u,v):= \int_\Omega{a \nabla u \cdot\nabla v}~\drm \xbm, \quad b(u,v) := - \int_\Omega{u\bbm\cdot\nabla v}~\drm \xbm,\\
 &c(u,v):= \int_\Omega{c u v}~\drm \xbm,\quad F(v) := \int_\Omega{fv }~\drm \xbm.
 \end{aligned}
\end{equation}
and 
\begin{equation}\label{eqn:the weak form}
 \begin{aligned}
 B(u,v) := a(u,v)+ b(u,v)+ c(u,v).
  \end{aligned}
\end{equation}

Finally, we present the assumption of the boundary~$\partial \Omega$~\cite{Beirao-Liu-Mascotto-Russo:2023}: it is the union
a finite number of smooth curved edges/faces
$\{\Gamma_i\}_{i=1,\cdots,N}$, i.e.,
\[
\bigcup_{i=1}^N\Gamma_i = \partial\Omega.
\]
Each $\Gamma_i$
is of class $\mathcal{C}^{\eta}$, for an integer $\eta \geq 1$,
which will fixed in Assumption~\ref{assumption:eta} below:
there exists a given regular and invertible $\mathcal{C}^{\eta}$-parametrization
$\gamma_i:I_i(F_i)\rightarrow\Gamma_i$ for $i = 1, \dots , N$, 
where $I_i$~(2D) and ~$F_i$~(3D) are a closed interval and a straight polygon,respectively.
The smoothness parameter~$\eta$ depends on the order of the numerical scheme
and will be specified later.

Since all the~$\Gamma_i$ can be treated analogously in the forthcoming analysis,
 we drop the index~$i$
and assume that~$\partial \Omega$ contains only one curved face~$\Gamma$.
To further simplify the presentation,
we focus on the two dimensional case
and postpone the discussion of the three dimensional case to Section~\ref{section:3D} below.
For the sake of notation simple, we further assume that $\gamma:[0,1] \to \Gamma$.
\begin{remark} \label{remark:internal-interfaces}
The forthcoming analysis can be extended to
the case of fixed internal interfaces (and jumping coefficients) with minor modifications.
To simplify the presentation, we stick to the case of~$\Gamma$ being a curved boundary face;
however, we shall present numerical experiments for jumping coefficients
across internal curved interfaces.
\end{remark}

 \section{The nonconforming virtual element method }\label{section:Meshes and broken spaces}
 
\subsection{Mesh assumptions} \label{subsection:meshes}
 Let~$\calTh$ be a decomposition of~$\Omega$. 
 We denote the diameter of each element~$\E$ by~$\hE$,
and the mesh size function of~$\calTh$ by~$h := \max_{\E \in \calT_h}\hE$. 
And let~$\calEh$ be the set of edges e of $\calEh$. 
We denote the size of any edge~$\e$ by~$\he$.
Let $\calEhI$ and $\calEhB$ be the sets of all interior and boundary edges in~$\calTh$.

For each element $\E\in\calTh$,
we denote the sets of its edges by~$\EcalE$,
which we split into straight~$\EcalEs$
and curved edges~$\EcalEc$, respectively.
With each element~$\E$, we associate the outward unit normal vector~$\nbfE$;
with each edge~$\e$, we associate a unit normal vector~$\nbfe$ out of the available two.

 Henceforth, we demand the following regularity assumptions on the sequence~$\calTh$:
there exists a positive constant~$\rho$ such that
\begin{itemize}
\item[\textbf{(G1)}] each element~$\E$ is star-shaped with respect to a ball of radius larger than or equal to~$\rho \hE$;
\item[\textbf{(G2)}] for each element~$\E$ and any of its edges (possibly curved)~$\e$,
$\he$ is larger than or equal to~$\rho \hE$.
\end{itemize}

We introduce a parametrization of the edges: For a curved edge, we have~$\gamma_e : I_e\rightarrow e$ is a restriction of the global parametrization~$\gamma$, while for a non curved edge~$e$,~$\gamma_e$ is an affine map.   

We shall write $x\lesssim y$ and $x\gtrsim y$ instead of $x\leq Cy$ and $x\geq Cy$, respectively,
for a positive constant $C$ independent of~$\calTh$.
Moreover, $x\approx y$ stands for 
$x \lesssim y$ and~$y \lesssim x$ at once.
The involved constants will be written explicitly only when necessary.

The validity of (\textbf{G1})--(\textbf{G2})
guarantees that the constants in the forthcoming trace and inverse inequalities are uniformly bounded.
\subsection{Broken spaces} \label{subsection:broken-spaces}
 For any integer $n\geq -1$ and any element~$\E\in\calTh$, 
 we define~$\Pbb_n(\E)$ be the set of polynomials on~$\E$ of degree less or equal to~$n$.
 In the case~$n=-1$ we set~ $\Pbb_{-1}(\E) = \{0\}$.
Indentify with~$\bm{x}_{\E}$ the centroid of the element~$\E$,
we introduce the space of normalized monomials 
 as
\[
\calM_n(\E) =
\left\{
\left(\dfrac{\bm{x}-\bm{x}_{\E}}{\hE}\right)^{\boldalpha}
\; \forall  \boldalpha \in \Nbb^2, \; \vert \boldalpha \vert \le n,\;\;
\forall \bm x \in \E \right\}.
\]
The space~$\calM_n(\E)$ forms a basis for~$\Pbb_n(\E)$.
For the edges of the grid, we introduce approximation spaces that consider the curved gemometry. 
For a reference segment~$I_e$, we introduce the monomial set
\[
\calM_n(\Ie) =
\left\{
\left(\dfrac{{x}-{x}_{\Ie}}{h_{\Ie}}\right)^{\alpha}
\; \forall  \alpha \in \Nbb, \;  \alpha \le n,\;\;
\forall x \in \Ie \right\}.
\]
with~$x_{I_e}$ the midpoint of~$I_e$ its size.
Next, we define the mapped polynomial spaces on the edges in~$\calE_h$, given by
\[
\begin{split}
\Pbbtilde_n(\e)   
= \{\qtilde 
= q\ \circ\ \gamma_\e^{-1}:\ q\in {P}_n(\Ie)\},
\qquad
\widetilde{\calM}_n(\e)  = \{\widetilde{m} = m\ \circ\ \gamma_\e^{-1}:\ m \in \calM_n(\Ie)\}.
\end{split}
\]
where~$\gamma_e$ represents the local map of the edge~$I_e$ to~$e$ as discussed before.

If~$\e$ is straight, then $\Pbbtilde_n(\e)$ and~$\widetilde{\calM}_n(\e)$
boil down to a standard polynomial space
and scaled monomial set, respectively.
For any~$s>0$, we introduce the broken Sobolev space over a mesh~$\calTh$ as
\[
H^s(\calT_h) := \{ v\in L^2(\Omega):\, v_{|\E} \in H^s(\E)\quad \forall \E\in\calTh\}
\]
and equip it with the broken norm and seminorm
\[
\Norm{v}_{s,h}^2 := 
\sum_{\E\in\calT_h}\Norm{v}_{s,\E}^2 ,
\qquad
\SemiNorm{v}_{s,h}^2 :=
\sum_{\E\in\calT_h}\SemiNorm{v}_{s,\E}^2 .
\]
We define the jump across the edge~$\e$ of any~$v$ in~$H^1(\calTh)$ as
\[
\jump{v}:=
\begin{cases}
v_{|\E^+}\nbfEp +v_{|\E^-} \nbfEm   & \text{if~$\e\in\calE_h^I$,  $\e \subset \partial\E^+ \cap \partial \E^-$ for given~$\E^+$, $\E^- \in \calTh$} \\
v\nbfe                               & \text{if~$\e\in\calEhB$, $\e \subset \E$ for a given~$\E \in \calTh$.}
\end{cases}
\]
The nonconforming Sobolev space of order~$n\in\Nbb$ over~$\calTh$ is given as follows:
\[
H^{1,nc}(\calTh,n) 
:= \left\{
v\in H^1(\calTh) \middle|  
\int_{\e}\jump{v}\cdot\nbfe \widetilde{q}_{n-1}~\drm s=0,\;
\forall \widetilde{q}_{n-1}\in \widetilde{\Pbb}_{n-1}(\e),\ \forall e\in\calEhI
\right\}.
\]
\subsection{The nonconforming virtual element space} \label{subsection:Nonconforming spaces}

\paragraph*{Projections onto polynomial spaces.}
In this section, we introduce the operators that are essential for the implementation of the method discussed subsequently.
On each element~$\E$, we have 

\begin{itemize}
\item the (possibly curved) edge $L^2$ projection $\Pitilde_{n}^{0,\e} : L^2(\e)\rightarrow \Pbbtilde_n(\e)$
given by
\begin{equation} \label{L2-projection-edge}
\int_\e\qtilde_n  (v -\Pitilde_n^{0,\e} v )\ \mathrm{d}s =0
\qquad \forall \qtilde_n\in  \Pbbtilde_n(\e);
\end{equation}
\item the Ritz-Galerkin operator
$\Pitilde_n^{\nabla,\E} : H^1(\E)\rightarrow \Pbb_n(\E)$
satisfying, for all~$\qn$ in~$\Pbb_n(\E)$,
\begin{equation} \label{eqn:RG-projection-bulk}
\begin{split}
& \int_\E \nabla \qn \cdot \nabla\Pitilde_n^{\nabla,\E} v \ \drm \xbm
= -\int_\E\Delta \qn  v \ \drm \xbm
+\sum_{\e\in\calE^K} \int_\e\Pitilde_{n-1}^{0,\e}
(\nbfE \cdot \nabla \qn) v \ \drm s\\ 
& = -\int_\E\Delta \qn  v \ \drm \xbm
+\sum_{\e\in\calE_{s}^K}\int_\e 
(\nbfE \cdot \nabla \qn)  v \ \drm s
+\sum_{\e\in\calE_{c}^K}\int_\e\Pitilde_{n-1}^{0,\e}
(\nbfE \cdot \nabla \qn) v \ \drm s,
\end{split}
\end{equation}
together with
\begin{equation} \label{eqn:RG-projection-zero-average}
\left\{
\begin{aligned}
\int_{\partial \E} (v -\Pitilde_n^{\nabla,\E} v )\ \mathrm{d}s =0 \quad n=1,\\
\int_{\E} (v -\Pitilde_n^{\nabla,\E} v )\ \mathrm{d}\xbm =0\quad n\geq 2;
\end{aligned}
\right.
\end{equation}
\item the $L^2$ operator $\Pitilde_{n}^{0,K}:\nabla (H^1(\E))\rightarrow [\Pbb_n(\E)]^d$ satisfying, for all $\qbm_n \in [\Pbb_n(\E)]^d$,
\begin{equation} \label{eqn:L2-tilde-projection-bulk}
\begin{split}
&\int_\E \qbm_n \cdot (\Pitilde_{n}^{0,K} \nabla v) \ \mathrm{d}\xbm 
= -\int_\E \nabla \cdot\qbm_n    v \ \mathrm{d}\xbm +\sum_{\e\in\calE^K} \int_\e\Pitilde_{n}^{0,\e}
(\nbfE \cdot \qbm_n) v \ \mathrm{d}s\\
& = -\int_\E \nabla \cdot\qbm_n    v \ \mathrm{d}\xbm
+\sum_{\e\in\calE_{s}^K}\int_\e 
(\nbfE \cdot \qbm_n) v  \ \mathrm{d}s
+\sum_{\e\in\calE_{c}^K}\int_\e\Pitilde_{n}^{0,\e}
(\nbfE \cdot \qbm_n) v  \ \mathrm{d}s.
\end{split}
\end{equation}
\item the $L^2$ projection
$\Pi_n^{0,\E} : L^2(\E)\rightarrow \Pbb_n(\E)$ given by
\begin{equation} \label{eqn:L2-projection-bulk}
\int_\E \qn  (v -\Pi_n^{0,\E} v )\ \mathrm{d}\xbm =0
\qquad \forall \qn \in \Pbb_n(\E). 
\end{equation}
For simplicity, we will use the same notation for the~$L^2$-projection of vector-valued functions onto the polynomial space~$[\Pbb_n(\E)]^d$. 
\end{itemize}
\medskip
\paragraph*{ Approximation space and degrees of freedom.}
Let $k\geq 1$ be the polynomial order of the method. By following the approach derived in~\cite{Beirao-Liu-Mascotto-Russo:2023} for elliptic problems, we select the following enhanced local virtual element space defined as:

\begin{equation}\label{eqn:local virtual space}
 \begin{aligned}
 \VhE:=\{\vh \in H^1(\E):~&\Delta \vh \in \Pbb_{k}(\E),~
\nbfE \cdot \nabla \vh \in \Pbbtilde_{k-1}(\e)\ 
~\forall \e \in \EcalE, \\
&(\Pitilde_{k}^{\nabla,\E} v-v,q_k)_{\E}=0~\forall q_k\in (\Pbb_k(\E)/\Pbb_{k-2}(\E))\}. 
 \end{aligned}
\end{equation}
We remark that, if~$\E$ is an element with only straight edges, then~\eqref{eqn:local virtual space} reduces to the enhanced virtual element spaces as defined in~\cite{cangiani-Manzini-Sutton:2017}.
In general, the space~$\VhE$ does not give a closed form for computing its shape functions. 
We here summarize the main properties of the space~$\VhE$ (we refer to \cite{A-A-B-M-R:2013,Beirao-Liu-Mascotto-Russo:2023} for a deeper
analysis)

\begin{itemize}

\item\textbf{Polynomial inclusion:}~$\Pbb_0(\E)\subset \VhE$ but in general~$\Pbb_k(\E)\not\subset\VhE$ on the element with some curved edges.
\item \textbf{Degrees of freedom:}~the following linear operators constitute a set of DoFs for~$\VhE$: for any~$\vh\in\VhE$ we consider 
\begin{itemize}
\item[-]on each edge~$\e$ of~$\E$, the moments
\begin{equation}\label{eqn:edge-dofs}
\bm{D}_{\e}^{i}(\vh) 
= |\e|^{-1} \int_\e \vh \widetilde{m}_i\ \mathrm{d}s\qquad
\forall \widetilde{m}_i \in \widetilde{\calM}_{k-1}(\e);
\end{equation}
\item[-] the bulk moments
\begin{equation}\label{eqn:bulk-dofs}
\bm{D}_{\E}^{j}(\vh)
= |\E|^{-1} \int_\E \vh m_j\ \mathrm{d}\xbm
\qquad  \forall m_j \in \calM_{k-2}(\E).
\end{equation}
\end{itemize}

\item \textbf{Polynomial projections:}
the operators~$\Pitilde_k^{\nabla,\E}$ and~$\Pi_k^{0,\E}$  can be explicitly computed from the DoFs ($\Dbm$)(see~\cite{Beirao-Liu-Mascotto-Russo:2023} and \cite{A-A-B-M-R:2013} );
 and~the function~$\Pitilde_{k-1}^{0,\E}\nabla\vh $~also can be explicitly computed from the DoFs ($\Dbm$) of~$\vh$(see below).

While the computation of ~the function~$\Pitilde_{k-1}^{0,\E}\nabla\vh $~is well-documented in the literature, 
the detailed process for computing~$\Pitilde_{k-1}^{0,\E}\nabla v$ requires further discussion.
To compute~$\Pitilde_{k-1}^{0,\E}\nabla v$, for all~$\E\in\calTh$ we need to calculate 
\[-\int_\E v \nabla\cdot \qbm_{k-1}\ \drm \xbm +\sum_{\e\in\calE_{s}^K}\int_\e 
(\nbfE \cdot  \qbm_{k-1})  v \ \drm s+\sum_{\e\in\calE_{c}^K}\int_\e\Pitilde_{k-1}^{0,\e}(\qbm_{k-1}\cdot\nbf)v \ \drm s \quad\forall \qbm_{k-1}\in[\Pbb_{k-1}]^2(\E).\] 
The first term above clearly depends only on the moments of~$v$ appearing in \eqref{eqn:bulk-dofs};
The second and third terms are computable from \eqref{eqn:edge-dofs}.

\end{itemize}

The global nonconforming virtual element space is obtained by a standard coupling of the interface degrees of freedom~\eqref{eqn:edge-dofs}:
\begin{equation}\label{eqn:global-space}
\Vh(\calTh)
:=
\left\{
\vh \in H^{1,nc}(\calTh,k) \middle|
{\vh}_{| \E} \in \VhE ~ \forall\E\in\calTh
\right\}.
\end{equation}

We further define nonconforming virtual element spaces with weakly imposed boundary conditions:
for all~$g$ in~$L^2(\partial \Omega)$,
\begin{equation}\label{eqn:Vkg}
\Vh^g(\calTh)
:= \left\{
\vh \in \Vh(\calTh) \middle| 
\int_\e (\vh-g) \qtilde_{k-1}\  \drm s= 0 \quad
\forall \qtilde_{k-1} \in \widetilde{\Pbb}_{k-1}(\e),\ \forall\e\in\calEhB
\right\}.
\end{equation}

\subsection{The nonconforming virtual element method}
\label{subsection:polynomial-projections-bf}

In this section, we present the discrete weak formulation of problem~\eqref{eqn:weak problem} using the operators and functional spaces introduced earlier.
\paragraph*{ Discrete bilinear forms.}
Given an element~$\E$,
we define the discrete local bilinear forms, 
for any~$\uh,\ \vh \in \VhE+\Pbb_k(\E)$ as follows:
\begin{equation}\label{eqn:local bilinear forms}
\begin{aligned}
\ahE(\uh,\vh) &= \int_\E (a\Pitildeo\nabla\uh)\cdot( \Pitildeo  \nabla\vh)~\drm  \xbm 
+\SE((I-\Pitilde_k^{\nabla,\E} )\uh,(I-\Pitilde_k^{\nabla,\E} )\vh);\\
\bhE(\uh,\vh) &= -\int_\E\Pi_{k-1}^{0,\E}  \uh (\bbm\cdot\Pitildeo  \nabla\vh)~\drm  \xbm  ;\\
\chE(\uh,\vh) &= \int_\E c(\Pi_{k-1}^{0,\E}  \uh)( \Pi_{k-1}^{0,\E}  \vh)~\drm  \xbm  ;\\
\BhE(\uh,\vh)& = \ahE(\uh,\vh)+\bhE(\uh,\vh)+\chE(\uh,\vh) .
\end{aligned}
\end{equation}

In this paper, we use~$\SE(\cdot,\cdot) : \VhE\times \VhE\rightarrow \Rbb$ defined as
\begin{equation}\label{eqn:dofi-dofi}
\SE(\uh,\vh) = \sum_{l=1}^{N_{\mathrm{dof}}(\E)}
\bm{D}^l(\uh)\bm{D}^l(\vh),
\end{equation}
being~$N_{\mathrm{dof}}(\E)$ the number of DoFs on~$\E$.
More precisely, we have the continuity and coercivity of the stabilization~$\SE$ (see Proposition 2 and 3 in~\cite{Beirao-Liu-Mascotto-Russo:2023}): There exists positive constants~$\alpha$ and~$\alpha^*$ only depend on~$k$ and the shape regularity constant~$\rho$, such that
\begin{equation}\label{eqn:stability bounded property}
  \SE(\uh,\vh)
\leq \alpha^*\SemiNorm{\uh}_{1,\E}\SemiNorm{\vh}_{1,\E}\qquad\forall \vh \in (\VhE+\Pbb_k(\E)) \cap \ker(\Pitilde_k^{\nabla,\E}), 
\end{equation}
and 
\begin{equation} \label{eqn:stability-bounds}
\alpha\SemiNorm{\vh}_{1,\E}^2
\leq \SE(\vh,\vh)
\qquad\forall \vh \in \VhE .
\end{equation}

Of course, other stabilizations satisfying the continuity and stability properties can be defined;
we stick to the choice in~\eqref{eqn:dofi-dofi} since it is the most popular in the virtual element community.
\paragraph*{The global bilinear terms.} We set for all~$\vh\in\Vh+\Pi_{\E\in\calTh}\Pbb_k(\E)$: 
\begin{equation}\label{eqn:bilinear forms}
\begin{aligned}
\ah(\uh,\vh) &= \sum_{\E\in\calTh}\ahE(\uh,\vh) ;&\bh(\uh,\vh) = \sum_{\E\in\calTh}\bhE(\uh,\vh);\\
\ch(\uh,\vh) &=\sum_{\E\in\calTh}\chE(\uh,\vh);&\Bh(\uh,\vh) = \sum_{\E\in\calTh}\BhE(\uh,\vh).
\end{aligned}
\end{equation}
\paragraph*{The discrete right-hand side.}
Here, we construct a computable discretization of the right-hand side~$F_h(\vh)$ as discussed in~\eqref{eqn:weak problem}~\cite{Beirao-Brezzi-Marini-Russo:2016}.
\begin{equation} \label{eqn:fk1}
F_h(\vh)
= \sum_{\E\in\calTh}\int_\E  f\ \Pi_{k-1}^{0,\E}\vh \drm \xbm 
 = \sum_{\E\in\calTh}\int_\E  \Pi_{k-1}^{0,\E}f\ \vh \drm \xbm 
\qquad \qquad \forall \vh \in \Vh;
\end{equation}

We propose the following nonconforming virtual element method:
\begin{equation}\label{eqn:VEM}
\begin{cases}
& \text{find $\uh\in \Vh^g(\calTh)$ such that} \\
& \Bh (\uh,\vh)= F_h(\vh)
        \qquad\qquad \forall \vh\in \Vh^0(\calTh).
\end{cases}
\end{equation}
The well posedness of method~\eqref{eqn:VEM}
requires further technical tools,
which are derived in section~\ref{section:stability}.

\section{Polynomial and virtual element approximation estimates} \label{section:Polynomial and virtual element approximation estimates}

To analyze the stability and the approximation properties of the local discrete bilinear form, we need the following result,
which states the stability of the operators~$\Pitildenabla$ and~$\Pitildeo$ in~$H^1$ and~$L^2$ norms, as defined in~\eqref{eqn:RG-projection-bulk},~\eqref{eqn:RG-projection-zero-average} and~\eqref{eqn:L2-tilde-projection-bulk}, respectively.

Consider the operator~$\PipartialE : H^1(\E) \to \Rbb$ given by
\[
\PipartialE v := \frac{1}{|\partial \E|}\int_{\partial \E} v\ \mathrm{d}s.
\]
The following Poincar\'e inequality is valid:
\begin{equation}\label{eqn:poincare}
\|v-\PipartialE v\|_{0,\E}\lesssim \hE\SemiNorm{v}_{1,\E}.
\end{equation}
\begin{lemma}\label{lemma:PTcont}
Given an element~$\E$ and $v_h$ in~$\VhE$, we have
\begin{equation}\label{eqn:tpi0bd}
\begin{aligned}
\SemiNorm{\Pitildenabla\vh}_{1,\E}
\lesssim \SemiNorm{ v_h}_{1,\E};\\
  \Norm{\Pitildeo \nabla v_h}_{0,\E}
\lesssim \Norm{\nabla v_h}_{0,\E}.  
\end{aligned}
\end{equation}
\end{lemma}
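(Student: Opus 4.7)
The plan is, for each of the two operators, to test the defining relation against the projected function itself and then rewrite the resulting right-hand side as the integration-by-parts identity applied to~$v_h$ plus a curved-edge remainder that lives inside the range of $(I-\Pitildeze)$. The first piece will be controlled by Cauchy--Schwarz; the curved-edge piece will be handled using orthogonality of $\Pitildeze$ combined with a polynomial trace inequality and the Poincar\'e estimate~\eqref{eqn:poincare}, with assumption~(\textbf{G2}) ensuring that the $h$-powers cancel.

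For the first bound I would set $p:=\Pitildenabla v_h\in\Pbb_k(\E)$, insert $q_n=p$ into~\eqref{eqn:RG-projection-bulk}, and subtract the standard integration-by-parts identity $\int_\E\nabla p\cdot\nabla v_h\,\drm\xbm=-\int_\E\Delta p\, v_h\,\drm\xbm+\int_{\partial\E}(\nbfE\cdot\nabla p) v_h\,\drm s$, valid since $p\in\Pbb_k(\E)$ and $v_h\in H^1(\E)$. Straight-edge contributions cancel identically and what remains is
\[
|p|_{1,\E}^2
=\int_\E\nabla p\cdot\nabla v_h\,\drm\xbm
+\sum_{\e\in\EcalEc}\int_\e(\Pitildeze-I)(\nbfE\cdot\nabla p)\, v_h\,\drm s.
\]
The first term is immediately $\leq|p|_{1,\E}|v_h|_{1,\E}$. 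On each curved edge I would exploit the orthogonality of $\Pitildeze$ to replace $v_h$ by $v_h-\PipartialE v_h$ (a constant lies in $\Pbbtilde_{k-1}(\e)$) and then apply Cauchy--Schwarz. A polynomial trace inequality yields $\|\nbfE\cdot\nabla p\|_{0,\e}\lesssim\he^{-1/2}|p|_{1,\E}$, while the scaled trace inequality combined with~\eqref{eqn:poincare} yields $\|v_h-\PipartialE v_h\|_{0,\e}\lesssim\hE^{1/2}|v_h|_{1,\E}$. By~(\textbf{G2}) the two $h$-powers cancel, the (uniformly bounded) number of curved edges is absorbed into the hidden constant, and dividing by $|p|_{1,\E}$ produces the first inequality.

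The second bound follows the same skeleton. I would set $\bm p:=\Pitildeo\nabla v_h\in[\Pbb_{k-1}(\E)]^d$, test~\eqref{eqn:L2-tilde-projection-bulk} with $\qbmk=\bm p$, and subtract the analogous identity $\int_\E\bm p\cdot\nabla v_h\,\drm\xbm=-\int_\E(\nabla\cdot\bm p)v_h\,\drm\xbm+\int_{\partial\E}(\nbfE\cdot\bm p)v_h\,\drm s$, obtaining
\[
\|\bm p\|_{0,\E}^2
=\int_\E\bm p\cdot\nabla v_h\,\drm\xbm
+\sum_{\e\in\EcalEc}\int_\e(\Pitildeze-I)(\nbfE\cdot\bm p)\, v_h\,\drm s.
\]
The same orthogonality, polynomial-trace, and Poincar\'e arguments bound the curved-edge remainder by $\|\bm p\|_{0,\E}|v_h|_{1,\E}$, and dividing by $\|\bm p\|_{0,\E}$ gives the claim.

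The main obstacle I anticipate is precisely the curved-edge remainder. The trace $\nbfE\cdot\nabla p$ (respectively $\nbfE\cdot\bm p$) is the product of a polynomial gradient and a varying normal field, so it does not belong to $\Pbbtilde_{k-1}(\e)$ and one cannot just ``cancel'' the edge projection through a polynomial test. Using orthogonality of $\Pitildeze$ against the Poincar\'e-corrected oscillation of $v_h$, together with the polynomial trace bound applied to the gradient factor, is the step that rescues the clean scaling and makes both estimates quantitatively independent of the curvature.
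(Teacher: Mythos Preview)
Your proposal is correct and follows essentially the same route as the paper: test the defining relation against the projection itself, undo the integration by parts to isolate a curved-edge remainder, and control that remainder with a polynomial trace bound on one factor and the Poincar\'e--trace estimate on the mean-free part of~$v_h$ on the other. The only cosmetic difference is that the paper subtracts $\PipartialE v_h$ from~$v_h$ at the outset (using $\nabla(v_h-\PipartialE v_h)=\nabla v_h$), whereas you perform the same subtraction inside the curved-edge integral via the orthogonality of $(I-\Pitildeze)$ to constants; the resulting estimates are identical.
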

\begin{proof}
The stability of~$\Pitildenabla$ follows immediately from Lemma 6 in \cite{Beirao-Liu-Mascotto-Russo:2023}
and is therefore not detailed here. 
We will, however, focus on the second operator.

Introduce $\bar v_h = v_h-\PipartialE \vh$. Clearly,
we have $ \nabla \bar v_h  = \nabla\vh$.  
Therefore, by substituting $\qbm_k = \Pitildeo \nabla \bar v_h$ into \eqref{eqn:L2-tilde-projection-bulk}, we obtain
$$
\begin{aligned}
&\Norm{\Pitildeo \nabla \vh}_{0,\E}^2
=\Norm{\Pitildeo \nabla \bar v_h}_{0,\E}^2 \\
&=-\int_\E \bar v_h \nabla\cdot (\Pitildeo \nabla \bar v_h)\ \dx 
+\sum_{\e\in\calE_{s}^K}\int_\e 
(\nbfE \cdot  \Pitildeo \nabla \bar v_h)  \bar v_h \ \drm s\\
&\quad+\sum_{\e\in\calE_{c}^K}\int_\e\Pitildeze(\nbfE \cdot\Pitildeo \nabla \bar v_h)\bar v_h \ \drm s\\
&=\int_\E \nabla \bar v_h \cdot (\Pitildeo \nabla \bar v_h)\ \dx-\sum_{\e\in\calE_{c}^K}\int_\e \bar v_h(I-\Pitildeze)( \nbfE \cdot\Pitildeo \nabla \bar v_h) \ \drm s\\
&\leq \Norm{\nabla \bar v_h}_{0,\E}\Norm{\Pitildeo \nabla \bar v_h}_{0,\E}
+\sum_{\e\in\calE_{c}^K}\|\nbfE \cdot \Pitildeo \nabla \bar v_h \|_{0,\e}
 \|\bar v_h\|_{0,\e}.
\end{aligned}$$

Applying  standard trace inequality, 
the scaled Poincar\'e inequality and inverse inequality, 
we infer
\begin{equation}\label{trace-inverse-vh}
   \hE^{-\frac12}\Norm{\bar v_h}_{0,\e}
\lesssim \hE^{-1}\Norm{\bar v_h}_{0,\E}+\Norm{\nabla\bar v_h}_{0,\E}
\lesssim\Norm{\nabla\bar v_h}_{0,\E}, 
\end{equation}
and
\[
\hE^{\frac12}
\| \nbfE \cdot\Pitildeo \nabla \bar v_h \|_{0,\e}
\lesssim\Norm{ \Pitildeo \nabla \bar v_h  }_{0,\E}
+\hE\SemiNorm{\Pitildeo \nabla \bar v_h }_{1,\E}.
\]

By combining the three estimates above, we obtain
\[
\Norm{\Pitildeo \nabla \vh }_{0,\E}^2 
\lesssim\Norm{\nabla\bar v_h}_{0,\E}
\Norm{\Pitildeo \nabla  \bar v_h }_{0,\E}
\lesssim\Norm{\nabla\vh}_{0,\E}\Norm{\Pitildeo \nabla  \vh }_{0,\E}.
\]

The assertions follow from the above estimates.
\end{proof}

And then, we present the approximation properties
of the two following approximants for sufficiently regular functions~$v$:
\begin{itemize}
\item the~$L^2$ projection~$\vpi = \Pi_k^{0,\E}v$ of $v$ into the polynomial space~$\Pbb_k(\E)$;
\item the DoFs interpolant~$\vIE$ in~$\VhE$ of~$v$
defined as
\begin{equation}\label{eqn:vik}
\begin{cases}
\int_\E (\vIE-v) m\ \mathrm{d}K = 0
& \forall m \in \calM_{k-2}(\E) \\
\int_\e (\vIE-v) \widetilde{m}\ \mathrm{d}s  =0 
&  \forall \widetilde{m} \in \widetilde{\calM}_{k-1}(\e), \quad \forall e\in \EcalE.
\end{cases}
\end{equation}
\end{itemize}

\begin{assumption} \label{assumption:eta}
Henceforth, the regularity parameter~$\eta$ of the curved boundary
introduced satisfies $\eta \ge k$.
\end{assumption}
Polynomial error estimates for~$\vpi$ are well know; see e.g.\cite{Brenner-Scott:2008}:
\begin{equation}\label{eqn:Polynomial error estimates}
\|v-\vpi\|_{0,\E}+\hE|v-\vpi|_{1,\E}
\lesssim\hE^{k+1} |v|_{k+1,\E}
\qquad \forall v\in H^{k+1}(\E).
\end{equation}

Then, we recall the approximation properties of some projection operators proved in \cite{Beirao-Liu-Mascotto-Russo:2023}.


\begin{lemma}\label{lemma:proes}
Let~$n \in \Nbb$ and the regularity parameter~$\eta$ of the curved boundary satisfy~$\eta\ge n+1$.
Given an element~$\E$ and any of its edges~$\e$,
for all $0\leq m \leq s \leq n+ 1$, we have
\[
|v - \Pitilde_{n}^{0,\e}v|_{m,\e}
\lesssim \he^{s-m}\Norm{v}_{s,\e}
\qquad \forall v \in H^s(\e) .
\]
\end{lemma}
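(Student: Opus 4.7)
The plan is to reduce the estimate to one-dimensional polynomial approximation on the reference interval $I_\e$ via the parametrization $\gamma_\e$. By the very definition of $\widetilde{\Pbb}_n(\e)$, the pull-back $\widehat v := v\circ\gamma_\e$ lies in $H^s(I_\e)$ whenever $v\in H^s(\e)$, and $\widetilde q\in \widetilde{\Pbb}_n(\e)$ iff $\widetilde q\circ\gamma_\e\in \Pbb_n(I_\e)$. Assumption~\ref{assumption:eta} gives $\eta\ge n+1\ge s$, so the chain rule together with the change of variable $\mathrm{d}s = |\gamma_\e'(t)|\,\mathrm{d}t$ produces the norm equivalences $\|\widehat v\|_{j,I_\e}\approx \|v\|_{j,\e}$ for $0\le j\le n+1$ with $\he$-independent constants (after normalizing $\gamma_\e$ as an affine rescaling onto a fixed reference interval composed with a fixed $\calC^\eta$ map of bounded derivatives). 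This norm equivalence is the analytic backbone of the whole argument.

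First I would treat the case $m=0$. The orthogonality~\eqref{L2-projection-edge} yields quasi-optimality, $\|v-\Pitilde_n^{0,\e}v\|_{0,\e}\le \|v-\widetilde q\|_{0,\e}$ for every $\widetilde q\in\widetilde{\Pbb}_n(\e)$. I would choose $\widetilde q := (P_n\widehat v)\circ\gamma_\e^{-1}$, where $P_n$ is a standard Bramble--Hilbert polynomial approximation on $I_\e$ that satisfies $\|\widehat v - P_n\widehat v\|_{0,I_\e}\lesssim h_{I_\e}^s\,|\widehat v|_{s,I_\e}$. Transferring back to $\e$ via the change of variable and the norm equivalence gives $\|v-\Pitilde_n^{0,\e}v\|_{0,\e}\lesssim \he^s\|v\|_{s,\e}$.

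For $1\le m\le s$, I would use the triangle inequality with the same $\widetilde q$,
\[
|v-\Pitilde_n^{0,\e}v|_{m,\e}
\le |v-\widetilde q|_{m,\e}
+ |\widetilde q-\Pitilde_n^{0,\e}v|_{m,\e}.
\]
The first summand is bounded by $\he^{s-m}\|v\|_{s,\e}$ through the same pull-back argument applied to the seminorm $|\cdot|_{m,\e}$ in combination with Bramble--Hilbert estimates on the interval. The second summand lies entirely in $\widetilde{\Pbb}_n(\e)$, so I would invoke a scaled inverse inequality on $\widetilde{\Pbb}_n(\e)$---again obtained by pulling back to $\Pbb_n(I_\e)$---to write $|\widetilde q-\Pitilde_n^{0,\e}v|_{m,\e}\lesssim \he^{-m}\|\widetilde q-\Pitilde_n^{0,\e}v\|_{0,\e}$, and then close the estimate by combining the $m=0$ bound with $\|v-\widetilde q\|_{0,\e}\lesssim \he^s\|v\|_{s,\e}$.

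The main obstacle is verifying that the constants in the norm equivalences under $\gamma_\e$, and in the inverse inequality on $\widetilde{\Pbb}_n(\e)$, are truly uniform in $\he$. This is precisely where Assumption~\ref{assumption:eta} intervenes: it ensures that the chain-rule expansion of derivatives up to order $n+1$ only involves well-defined, uniformly bounded derivatives of $\gamma_\e$, so that polynomial approximation estimates on the flat interval transfer cleanly to the curved edge.
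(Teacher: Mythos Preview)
The paper does not actually prove this lemma: it merely recalls it from~\cite{Beirao-Liu-Mascotto-Russo:2023} (see the sentence immediately preceding Lemma~\ref{lemma:proes}). Your argument is correct and is essentially the standard proof one would find in that reference: pull back to the parameter interval~$I_\e$ via~$\gamma_\e$, use that~$\eta\ge n+1$ guarantees uniform norm equivalence between~$H^j(\e)$ and~$H^j(I_\e)$ for~$0\le j\le n+1$, apply one-dimensional Bramble--Hilbert for the~$m=0$ case through quasi-optimality of the~$L^2$ projection, and then combine with an inverse inequality on~$\widetilde{\Pbb}_n(\e)$ (obtained by pulling back to~$\Pbb_n(I_\e)$) for~$m\ge 1$.

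Two small remarks worth making explicit. First, the projection~$\Pitilde_n^{0,\e}$ is orthogonal with respect to the arc-length measure on~$\e$, so its pull-back is a \emph{weighted} $L^2$ projection on~$I_\e$; your use of quasi-optimality sidesteps this neatly, since you only need~$\|v-\Pitilde_n^{0,\e}v\|_{0,\e}\le\|v-\widetilde q\|_{0,\e}$ for the particular~$\widetilde q=(P_n\widehat v)\circ\gamma_\e^{-1}$. Second, the step~$h_{I_\e}^s|\widehat v|_{s,I_\e}\lesssim \he^s\|v\|_{s,\e}$ requires~$h_{I_\e}\approx\he$, which follows because~$\gamma$ is a fixed regular parametrization with~$|\gamma'|$ bounded above and below; this is implicit in your ``normalizing~$\gamma_\e$'' remark but could be stated more directly.
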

\begin{lemma}\label{lemma:v-pikv2}
For all $v$ in~$H^{s+1}(\Omega)$,
$1 \le s \le k$, we have
\begin{equation}\label{eqn:v-pikv}
|v-\Pitilde_k^{\nabla,h} v|_{1,h}
\lesssim h^{s} \Norm{v}_{s+1,\Omega}.
\end{equation}

\end{lemma}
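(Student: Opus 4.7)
The plan is to argue element by element and then sum. For each $\E \in \calTh$, I would introduce the $L^2$-projection polynomial $\vpi := \Pi_k^{0,\E} v \in \Pbb_k(\E)$ and split via the triangle inequality
\[
|v - \Pitildenabla v|_{1,\E}
\le |v - \vpi|_{1,\E} + |\vpi - \Pitildenabla v|_{1,\E}.
\]
The first summand is immediately under control thanks to the polynomial approximation estimate~\eqref{eqn:Polynomial error estimates}, which gives $\lesssim \hE^{s}|v|_{s+1,\E}$. All the real work therefore lies in the second summand.

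To bound it I would set $q_k := \Pitildenabla v - \vpi \in \Pbb_k(\E)$, so that $|q_k|_{1,\E}^2 = \int_\E \nabla q_k \cdot \nabla \Pitildenabla v - \int_\E \nabla q_k \cdot \nabla \vpi$. Testing~\eqref{eqn:RG-projection-bulk} with~$q_k$ in the first integral and integrating by parts in the second yields
\[
|q_k|_{1,\E}^2
= -\int_\E \Delta q_k\,(v - \vpi)\,\drm \xbm
+ \sum_{e \in \EcalEs} \int_e (\nbfE \cdot \nabla q_k)(v-\vpi)\,\drm s
+ \sum_{e \in \EcalEc} \int_e \bigl[\Pitildeze(\nbfE \cdot \nabla q_k)\, v - (\nbfE \cdot \nabla q_k)\,\vpi\bigr]\,\drm s.
\]
The bulk and straight-edge contributions are routine: Cauchy--Schwarz, a scaled trace inequality, polynomial inverse estimates on the factors involving~$q_k$, and~\eqref{eqn:Polynomial error estimates} together produce a bound of the form $|q_k|_{1,\E}\,\hE^{s}|v|_{s+1,\E}$.

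The hard part will be the curved-edge terms, since these encode the fact that $\Pitildenabla$ does not preserve $\Pbb_k(\E)$ on elements with curved boundary. I would decompose each such contribution by inserting $\pm\,\Pitildeze(\nbfE \cdot \nabla q_k)\,\vpi$: one piece reduces to the straight-edge treatment once $L^2$-continuity of $\Pitildeze$ is invoked, while the other reads $\int_e (I - \Pitildeze)(\nbfE \cdot \nabla q_k)\,\vpi\,\drm s$. Using the $L^2$-orthogonality of~$\Pitildeze$ to subtract any element of $\Pbbtilde_{k-1}(e)$ from $\vpi$, I would rewrite this as $\int_e (I - \Pitildeze)(\nbfE \cdot \nabla q_k)\,(I-\Pitildeze)\vpi\,\drm s$ and then bound both factors by Lemma~\ref{lemma:proes}; Assumption~\ref{assumption:eta} enters precisely here, through the smoothness of~$\gamma$ needed to ensure that $\nbfE \cdot \nabla q_k$ has the regularity demanded by that lemma.

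Dividing the resulting identity for $|q_k|_{1,\E}^2$ by $|q_k|_{1,\E}$ gives $|\vpi - \Pitildenabla v|_{1,\E} \lesssim \hE^{s}\|v\|_{s+1,\E}$. Combining this with the bound on $|v-\vpi|_{1,\E}$ and summing over $\E \in \calTh$ then yields~\eqref{eqn:v-pikv}.
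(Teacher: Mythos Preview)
The paper does not actually give a proof of this lemma: it is stated as a result ``recalled'' from \cite{Beirao-Liu-Mascotto-Russo:2023}, with no argument supplied. So there is no in-paper proof to compare against. That said, your strategy mirrors the paper's detailed proof of the companion estimate Lemma~\ref{lemma:v-pikt0v} almost step for step: split via an intermediate polynomial, express the difference of projections through the defining identity, isolate the curved-edge discrepancy $(I-\Pitildeze)(\nbfE\cdot\nabla q_k)$, and estimate with Lemma~\ref{lemma:proes} together with the bound~\eqref{eqn:psin2}. In that sense your route is exactly the one the authors use for the analogous operator and presumably the one in the cited reference.

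One point needs tightening. Your final local claim $|\vpi - \Pitildenabla v|_{1,\E} \lesssim \hE^{s}\|v\|_{s+1,\E}$ is slightly too strong as written. When you bound $\|(I-\Pitildeze)\vpi\|_{0,\e}$ you will have to pass through~$v$ (since $\vpi$ restricted to a curved edge has no direct approximation property), and Lemma~\ref{lemma:proes} then produces a term $\he^{s}\|v\|_{s,\e}$ living on the curved boundary, not on~$\E$. This is exactly what happens in the proof of Lemma~\ref{lemma:v-pikt0v}; see~\eqref{eqn:second-bound-boldpitildek-estimates}. The fix is the same one used there: carry the boundary contribution $\sum_i\|v\|_{s,\Gamma_i}$ through the global sum and absorb it into $\|v\|_{s+1,\Omega}$ via the Stein extension argument~\eqref{tracegammai}. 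With that adjustment your proof goes through.
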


\begin{lemma}\label{lemma:DoFs-interpolant-estimates}For all~$v$ in~$H^{s+1}(\Omega)$,
$1 \le s \le k$, it holds
\begin{equation}\label{eqn:Approximation}
\begin{aligned}
\Norm{v-\vI}_{0,\Omega}+h\SemiNorm{v-\vI}_{1,h}&\lesssim h^{s+1}\Norm{v}_{s+1,\Omega}.
\end{aligned}
\end{equation}
\end{lemma}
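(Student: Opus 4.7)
The plan is to prove the estimate element-by-element: on each $\E\in\calTh$, we aim at
\[
\Norm{v-\vIE}_{0,\E}^2 + \hE^2 \SemiNorm{v-\vIE}_{1,\E}^2 \lesssim \hE^{2(s+1)}\Norm{v}_{s+1,\E}^2,
\]
whose sum over $\E\in\calTh$ yields~\eqref{eqn:Approximation}; the broken $H^1$-seminorm on the right of~\eqref{eqn:Approximation} accommodates the nonconforming nature of $\Vh(\calTh)$ automatically. The strategy is the standard VEM triangle-inequality trick: introduce the polynomial approximant $v_\pi := \Pi_k^{0,\E} v \in \Pbb_k(\E)$, for which~\eqref{eqn:Polynomial error estimates} gives $\Norm{v-v_\pi}_{0,\E} + \hE \SemiNorm{v-v_\pi}_{1,\E} \lesssim \hE^{s+1}\SemiNorm{v}_{s+1,\E}$. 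Denote by $\mathcal{I}^\E:H^1(\E)\to\VhE$ the DoFs interpolation operator defined by~\eqref{eqn:vik}, so that $\vIE = \mathcal{I}^\E v$. Since $v_\pi\notin\VhE$ on elements with curved edges, we split, using linearity of $\mathcal{I}^\E$,
\[
v - \vIE = (v - v_\pi) + (v_\pi - \mathcal{I}^\E v_\pi) + \mathcal{I}^\E(v_\pi - v),
\]
and bound the three summands separately.

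The first summand is controlled directly by~\eqref{eqn:Polynomial error estimates}. For the third summand, a scaling argument based on (G1)--(G2) delivers the norm equivalence
\[
\Norm{w_h}_{0,\E}^2 + \hE^2 \SemiNorm{w_h}_{1,\E}^2 \lesssim \hE^{d}\sum_l |\Dbm^l(w_h)|^2 \qquad \forall w_h\in\VhE.
\]
Applied to $w_h = \mathcal{I}^\E(v_\pi - v)$, whose DoFs coincide with those of $v_\pi - v$, together with Cauchy-Schwarz on each DoF and a scaled trace inequality analogous to~\eqref{trace-inverse-vh}, this yields
\[
\Norm{\mathcal{I}^\E(v_\pi - v)}_{0,\E} + \hE \SemiNorm{\mathcal{I}^\E(v_\pi - v)}_{1,\E} \lesssim \Norm{v-v_\pi}_{0,\E} + \hE\SemiNorm{v-v_\pi}_{1,\E} \lesssim \hE^{s+1}\SemiNorm{v}_{s+1,\E}.
\]

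The second summand $v_\pi - \mathcal{I}^\E v_\pi$ is the genuinely new ingredient. On straight-edge elements $v_\pi \in \VhE$ and the term vanishes. On curved-edge elements, one must quantify how much the DoFs interpolant of the polynomial $v_\pi$ deviates from $v_\pi$ itself, the obstruction being that $\VhE$ does not contain $\Pbb_k(\E)$ in the curved case. This step exploits Assumption~\ref{assumption:eta}: under the $\calC^\eta$-regularity of $\gamma$ with $\eta \ge k$, a Bramble-Hilbert argument in curvilinear coordinates adapted to the curved edge — combined with the projection stability of Lemma~\ref{lemma:PTcont} and the defining orthogonalities~\eqref{eqn:vik} — produces
\[
\Norm{v_\pi - \mathcal{I}^\E v_\pi}_{0,\E} + \hE \SemiNorm{v_\pi - \mathcal{I}^\E v_\pi}_{1,\E} \lesssim \hE^{s+1}\SemiNorm{v}_{s+1,\E}.
\]
This last estimate is the main obstacle: it encapsulates the geometric error introduced by the curved boundary. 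Its proof essentially mimics the conforming curved VEM analysis of~\cite{Beirao-Liu-Mascotto-Russo:2023}, the only adaptation being that the matching DoFs here are edge moments against $\Pbbtilde_{k-1}(\e)$ rather than vertex/edge values — a difference that does not affect the underlying geometric-error mechanism.
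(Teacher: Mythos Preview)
Your splitting is a sound and standard route, but the paper's argument is considerably shorter and exploits a structural feature of the interpolant that you overlook. The paper simply \emph{cites} Lemma~10 of~\cite{Beirao-Liu-Mascotto-Russo:2023} for the $H^1$-seminorm bound $\SemiNorm{v-\vI}_{1,h}\lesssim h^{s}\Norm{v}_{s+1,\Omega}$, and then derives the $L^2$ bound from it in one line: since the edge moments~\eqref{eqn:edge-dofs} of $v$ and $\vIE$ agree, one has $\PipartialE(v-\vIE)=0$ on every element, so the Poincar\'e inequality~\eqref{eqn:poincare} gives $\Norm{v-\vIE}_{0,\E}\lesssim \hE\SemiNorm{v-\vIE}_{1,\E}$, and summing finishes. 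Your triangle-inequality decomposition with $v_\pi$, by contrast, essentially re-derives what is already packaged inside the cited $H^1$ estimate, and then treats the $L^2$ part by the same machinery rather than by the Poincar\'e shortcut.

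One technical caveat on your write-up: your element-local claim $\Norm{v_\pi-\mathcal{I}^\E v_\pi}_{0,\E}+\hE\SemiNorm{v_\pi-\mathcal{I}^\E v_\pi}_{1,\E}\lesssim \hE^{s+1}\SemiNorm{v}_{s+1,\E}$ with a purely local right-hand side is optimistic. The curved-edge analysis in~\cite{Beirao-Liu-Mascotto-Russo:2023} (and in the analogous Lemma~\ref{lemma:v-pikt0v} here) produces trace norms $\Norm{v}_{s,\e}$ on the curved edges, which are then absorbed into $\Norm{v}_{s+1,\Omega}$ via Stein extension and a global trace argument as in~\eqref{tracegammai}; a clean element-local bound is not what comes out. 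This does not break your global estimate~\eqref{eqn:Approximation}, but your intermediate local inequality as written would need adjustment.
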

\begin{proof}
    Clearly, the result follows from Lemma 10 in~\cite{Beirao-Liu-Mascotto-Russo:2023}, which states that:
    \[
\SemiNorm{v-\vI}_{1,h}\lesssim h^{s}\Norm{v}_{s+1,\Omega}.
\]

By the definition of~$\vI$, for any element~$\E$ we have~$\PipartialE {v}=\PipartialE{\vI}$. 
Using the Poincar\'e inequality \eqref{eqn:poincare}, we then find
\[\Norm{v-\vI}_{0,\Omega}=(\sum_{\E\in\calT_h}\Norm{v-\vI}_{0,\E}^2)^{\frac12}\leq (\sum_{\E\in\calT_h}h^{2s}\SemiNorm{v-\vI}_{1,\E}^2)^{\frac12}\lesssim h\SemiNorm{v-\vI}_{1,h}.\]

The assertion follows from the above estimates.
\end{proof}

We also recall the properties of the Stein's extension operator~$E$ as presented in~\cite[Chapter~VI, Theorem~$5$]{Stein:1970}.
\begin{lemma} \label{lemma:Stein}
Given a Lipschitz domain~$\Omega$ in~$\Rbb^2$ and $s \in \Rbb$, $s \ge 0$,
there exists an extension operator
$E: H^s(\Omega)\rightarrow H^s(\Rbb^2)$
such that
\begin{equation}\label{eqn:Extension}
Ev |_\Omega = v
\quad\text{and}\quad
\|Ev\|_{s,\Rbb^d}\lesssim \Norm{v}_{s,\Omega}
\qquad\qquad \forall v\in H^s(\Omega).
\end{equation}
The hidden constant depends on~$s$
but not on the diameter of~$\Omega$.
\end{lemma}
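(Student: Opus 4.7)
The plan is to invoke Stein's classical extension theorem for the existence of~$E$ and the $H^s$-bound, and then to establish the diameter-independence of the hidden constant by a rescaling argument.

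First I would cite~\cite[Chapter VI, Theorem 5]{Stein:1970} directly: for any bounded Lipschitz domain~$D$ and any integer $k\ge 0$, Stein constructs a single linear operator $E_D : H^k(D)\to H^k(\Rbb^d)$ that extends~$v$ from~$D$ to all of~$\Rbb^d$, with operator norm depending only on $k$, $d$, and the Lipschitz character of~$\partial D$. Fractional~$s$ is obtained by real (or complex) interpolation between consecutive integer orders. The construction is carried out locally through a partition of unity that reduces matters to the model case of a special Lipschitz domain (the epigraph of a Lipschitz function), where $E_D$ is defined by an integral formula involving Stein's regularized distance function and a reflection kernel whose moments are calibrated so that all derivatives up to any prescribed order are matched across the boundary.

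Second, to obtain the diameter-independence stated in the lemma, I would rescale. Let $d_\Omega := \mathrm{diam}(\Omega)$ and set $\widehat\Omega := d_\Omega^{-1}\Omega$, which has unit diameter and the same Lipschitz character as~$\Omega$. For $v\in H^s(\Omega)$, define $\widehat v(\widehat x) := v(d_\Omega \widehat x) \in H^s(\widehat\Omega)$, apply Stein's extension $\widehat E$ on $\widehat\Omega$, and finally set $Ev(x) := (\widehat E \widehat v)(x/d_\Omega)$. Since Stein's bound on $\widehat\Omega$ reads $\Norm{\widehat E \widehat v}_{s,\Rbb^2} \le \widehat C(s)\Norm{\widehat v}_{s,\widehat\Omega}$ with $\widehat C(s)$ depending only on~$s$ and on the Lipschitz character of~$\widehat\Omega$, translating this inequality back to~$\Omega$ via the change-of-variables formulas for the $L^2$-norm and the $H^s$-seminorm is the goal.

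The main obstacle will be to track the Sobolev-norm scalings consistently across all orders. Under the dilation $\widehat x = x/d_\Omega$ in dimension two, one has $\Norm{\widehat v}_{0,\widehat\Omega}^2 = d_\Omega^{-2}\Norm{v}_{0,\Omega}^2$ and, for the Slobodeckij seminorm, $\SemiNorm{\widehat v}_{s,\widehat\Omega}^2 = d_\Omega^{2s-2}\SemiNorm{v}_{s,\Omega}^2$, with identical identities for $\widehat E\widehat v$ on all of~$\Rbb^2$. Multiplying Stein's inequality on~$\widehat\Omega$ by the appropriate power of $d_\Omega$ on each side makes the~$d_\Omega$ factors cancel, yielding the claimed bound with constant independent of $d_\Omega$; for integer~$s$ one uses the analogous identities on each individual derivative. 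The delicate point is checking that the same cancellation survives interpolation for non-integer~$s$, which follows from the scale-equivariance of the real interpolation functor together with the fact that the Lipschitz character of~$\widehat\Omega$ is by construction identical to that of~$\Omega$, so the interpolated constant is itself diameter-free.
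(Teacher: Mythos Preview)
The paper does not prove this lemma at all; it is stated as a recalled result, the only justification being the citation to~\cite[Chapter~VI, Theorem~5]{Stein:1970}. Your first paragraph therefore already reproduces the paper's entire treatment.

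The rescaling argument you add for the diameter-independence clause, however, has a genuine gap. The full $H^s$ norm is not scale-homogeneous: under $\widehat x = x/d_\Omega$ in two dimensions one has $\Norm{D^\alpha \widehat v}_{0,\widehat\Omega}^2 = d_\Omega^{\,2|\alpha|-2}\Norm{D^\alpha v}_{0,\Omega}^2$, so the summands making up $\Norm{\cdot}_{s}^2$ carry \emph{different} powers of~$d_\Omega$. Transporting Stein's inequality on~$\widehat\Omega$ back to~$\Omega$ yields only the weighted estimate
\[
\sum_{|\alpha|\le s} d_\Omega^{\,2|\alpha|-2}\Norm{D^\alpha Ev}_{0,\Rbb^2}^2
\;\le\; \widehat C(s)^2 \sum_{|\alpha|\le s} d_\Omega^{\,2|\alpha|-2}\Norm{D^\alpha v}_{0,\Omega}^2,
\]
and there is no single power of~$d_\Omega$ that turns both sides into the unweighted $H^s$ norms; the claimed cancellation does not occur. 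In fact the clause itself is problematic as stated: for $v\equiv 1$ on the disk~$B(0,R)$ one has $\Norm{v}_{1,B(0,R)}^2=\pi R^2$, while any $H^1(\Rbb^2)$ extension must spend gradient energy transitioning from the value~$1$ to an $L^2$ tail, and minimising over radial profiles gives $\Norm{Ev}_{1,\Rbb^2}^2 \gtrsim 1/|\ln R|$, so the ratio blows up as $R\to 0$. In the paper the extension is only ever applied to the fixed physical domain~$\Omega$ (see~\eqref{tracegammai}), so the diameter-independence clause is never actually used; but your rescaling step, as written, does not establish it.
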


Using similar argument techniques as those in Lemma 11 of~\cite{Beirao-Liu-Mascotto-Russo:2023}, we can derive the error estimate for the $L^2$-operator~$\Pitildeo$
defined in~\eqref{eqn:L2-tilde-projection-bulk}.

\begin{lemma}\label{lemma:v-pikt0v}
For all $v$ in~$H^{s+1}(\Omega)$,
$1 \le s \le k$, we have
\begin{equation}\label{eqn:v-pikt0v}
\Norm{\nabla v-\Pitildeoh \nabla v}_{0,\Omega}
\lesssim h^{s} \Norm{v}_{s+1,\Omega}.
\end{equation}

\end{lemma}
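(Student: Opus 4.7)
The plan is to adapt the proof of Lemma~11 in~\cite{Beirao-Liu-Mascotto-Russo:2023}, which established the analogous estimate for the Ritz--Galerkin projection $\Pitildenabla$ (restated as Lemma~\ref{lemma:v-pikv2}). I would argue element by element. On a fixed $\E\in\calTh$, introduce a polynomial approximant $\vpi\in\Pbb_k(\E)$ (for example, the $L^2$ projection of a Stein extension $Ev$, cf.\ Lemma~\ref{lemma:Stein}) and use the splitting
\[
\nabla v - \Pitildeo \nabla v
= (\nabla v - \nabla \vpi) \;-\; \Pitildeo(\nabla v - \nabla \vpi) \;+\; (\nabla \vpi - \Pitildeo \nabla \vpi).
\]
Standard polynomial approximation theory gives $\Norm{\nabla v - \nabla \vpi}_{0,\E}\lesssim h^s\Norm{v}_{s+1,\E}$, and the second summand enjoys the same bound thanks to the $L^2$ stability of $\Pitildeo$ established in Lemma~\ref{lemma:PTcont}.

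The heart of the matter is the consistency term $\nabla \vpi - \Pitildeo \nabla \vpi$. Since $\vpi$ is smooth, classical integration by parts combined with the defining identity~\eqref{eqn:L2-tilde-projection-bulk} yields, for every $\qbm_{k-1}\in[\Pbb_{k-1}(\E)]^d$,
\[
\int_\E \qbm_{k-1}\cdot(\nabla \vpi - \Pitildeo \nabla \vpi)\,\drm \xbm
= \sum_{\e\in\EcalEc}\int_\e (I-\Pitildeze)(\nbfE\cdot\qbm_{k-1})\,\vpi\,\drm s,
\]
so only curved edges contribute. Testing with $\qbm_{k-1}=\nabla \vpi - \Pitildeo \nabla \vpi\in[\Pbb_{k-1}(\E)]^d$, exploiting the $L^2(\e)$-orthogonality of $\Pitildeze$ against $\widetilde{\Pbb}_{k-1}(\e)$ to subtract $\Pitildeze\vpi$ from $\vpi$, and applying Cauchy--Schwarz produces
\[
\Norm{\nabla \vpi - \Pitildeo \nabla \vpi}_{0,\E}^2
\lesssim \sum_{\e\in\EcalEc}\Norm{\nbfE\cdot\qbm_{k-1}}_{0,\e}\,\Norm{\vpi-\Pitildeze\vpi}_{0,\e}.
\]

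The first edge factor is controlled by a scaled trace--inverse estimate on the polynomial $\qbm_{k-1}$, contributing $h^{-1/2}\Norm{\qbm_{k-1}}_{0,\E}$. The second factor is handled via Lemma~\ref{lemma:proes} applied to $\vpi$ on the curved edge, combined with a scaled trace inequality to route the regularity back to $\E$. Assumption~\ref{assumption:eta} ($\eta\ge k$) ensures that $\gamma_\e\in\mathcal{C}^\eta$, so $\vpi\circ\gamma_\e$ carries enough smoothness to invoke Lemma~\ref{lemma:proes} at order $\sigma=k$ and to compensate the $h^{-1/2}$ inverse-trace factor with an extra half power of $h$ from the curved-edge approximation. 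Squaring, cancelling one copy of $\Norm{\qbm_{k-1}}_{0,\E}$, and summing over $\E\in\calTh$ using the mesh regularity \textbf{(G1)}--\textbf{(G2)} then yield~\eqref{eqn:v-pikt0v}.

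The main obstacle is the careful bookkeeping of powers of $h$ in the curved-edge estimate: one must combine Lemma~\ref{lemma:proes} for $\Pitildeze$ with a trace theorem so as to extract exactly $h^{s+1/2}\Norm{v}_{s+1,\E}$ from $\Norm{\vpi-\Pitildeze\vpi}_{0,\e}$ (after absorbing the polynomial-to-edge comparison between $\vpi$ and $v$), since this is the precise factor required to balance the $h^{-1/2}$ on the $\qbm_{k-1}$-side and to recover the optimal rate $h^s$. Once that interpolation between the regularity of $v$ and the polynomial nature of $\vpi$ is pinned down, the remaining steps reduce to routine Cauchy--Schwarz, triangle, and summation arguments already used in Lemma~\ref{lemma:v-pikv2}.
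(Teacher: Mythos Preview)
Your overall strategy—reducing to a curved-edge identity and bounding via Cauchy--Schwarz—is the same as the paper's, but there is a genuine gap at the step where you bound the first edge factor by a naive trace--inverse estimate contributing $h^{-1/2}\Norm{\qbm_{k-1}}_{0,\E}$. This is too crude by a full power of~$h$. The paper keeps the projector and invokes the sharper bound
\[
\Norm{(I-\Pitildeze)(\nbfE\cdot\qbm_{k-1})}_{0,\e}\lesssim \hE^{1/2}\Norm{\qbm_{k-1}}_{0,\E},
\]
which is~\eqref{eqn:psin2}, taken from~(36) in~\cite{Beirao-Liu-Mascotto-Russo:2023}. The point is that on a straight edge $\nbfE\cdot\qbm_{k-1}\in\Pbb_{k-1}(\e)=\Pbbtilde_{k-1}(\e)$ exactly, so $(I-\Pitildeze)$ annihilates it; on a curved edge the residual is an $O(h)$ curvature defect, and~\eqref{eqn:psin2} quantifies precisely this. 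Your proposed compensation on the $\vpi$-side cannot recover the lost power: applying Lemma~\ref{lemma:proes} at order~$k$ to~$\vpi$ and tracing back to~$\E$ yields at best $\Norm{(I-\Pitildeze)\vpi}_{0,\e}\lesssim h^{k-1/2}\Norm{\vpi}_{k,\E}$ (the term $|\vpi|_{k+1,\E}$ vanishes, so no further gain is available), and combined with your $h^{-1/2}$ this gives $h^{k-1}$ rather than $h^{k}$ at $s=k$. Routing through~$v$ instead of~$\vpi$ on the edge produces the non-local contribution $h^{s}\Norm{v}_{s,\e}$, which after your $h^{-1/2}$ and summation over boundary edges still falls short by~$h^{1/2}$.

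A secondary issue: Lemma~\ref{lemma:PTcont} is stated only for $v_h\in\VhE$, whereas you invoke it for $\nabla(v-\vpi)$. Its proof does extend (only trace and Poincar\'e are used), but it is cleaner to follow the paper's splitting $\nabla v-\Pitildeo\nabla v=(\nabla v-\Pik\nabla v)+(\Pik\nabla v-\Pitildeo\nabla v)$, which avoids this altogether and places~$v$ (not~$\vpi$) in the edge identity. Then~\eqref{eqn:psin2} on the polynomial factor together with Lemma~\ref{lemma:proes} applied to $(I-\Pitildeze)v$, followed by the Stein extension argument (Lemma~\ref{lemma:Stein}) to absorb $\Norm{v}_{s,\Gamma_i}$ into $\Norm{v}_{s+1,\Omega}$, closes the proof at the optimal rate.
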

\begin{proof}
    By applying the triangle inequality and the approximation property~$ \Pi_n^{0,\E}$, we obtain
\[
\begin{aligned}
\Norm{\nabla v-\Pitildeo\nabla v}_{0,\E}
&\leq \Norm{\nabla v-\Pik \nabla v}_{0,\E}+\Norm{\Pik \nabla v - \Pitildeo \nabla v}_{0,\E}\\
&\lesssim \hE^s \SemiNorm{v}_{s+1,\E}
+ \Norm{\Pik \nabla v - \Pitildeo \nabla v}_{0,\E}.
\end{aligned}
\]

By definitions of~$\Pitildeo$ and~$\Pik$,
for all~$\qbmk$ in~$\Pbb_{k-1}(\E)$,
we can write
\[
\begin{aligned}
&\int_\E \qbmk \cdot  (\Pik \nabla v -\Pitildeo\nabla v)~ \dx\\
=& \int_\E \qbmk \cdot \nabla v~ \dx
+\int_\E\nabla\cdot \qbmk v \ \dx
-\sum_{\e\in\EcalE}\int_\e\Pitildeze
(\nbfE \cdot \qbmk) v \ \mathrm{d}s\\
 = &\sum_{\e\in\EcalE}\int_\e (I- \Pitildeze) (\nbfE \cdot  \qbmk) v~\drm s
= \sum_{\e\in\EcalEc}\int_\e 
(I- \Pitildeze) (\nbfE \cdot  \qbmk) v ~\drm s.
\end{aligned}
\]

 We set
\[\bm{\psi} 
:= (\Pik \nabla v - \Pitildeo \nabla v)
\in [\Pbb_{k-1}(\E)]^2.
\]

We rewrite the above identity for the choice
$\qbmk = \bm{\psi} $ and obtain
\begin{equation} \label{eqn:difference-boldpik-boldpitildek}
\begin{aligned}
& \Norm{\Pik \nabla v - \Pitildeo \nabla v}_{0,\E}^2 
= \sum_{\e\in\EcalE_c}\int_\e (I- \Pitildeze)(\bm{\psi}\cdot\nbfE) v~\mathrm{d}s\\
& =\sum_{\e\in\EcalE_c}\int_\e(I-{\Pitildeze})(\bm{\psi}\cdot\nbfE) (I- \Pitildeze)v~\mathrm{d}s \\
& \leq\sum_{\e\in\EcalE_c}\|(I-{\Pitildeze})(\bm{\psi}\cdot\nbfE) \|_{0,\e}\|(I- \Pitildeze) v\|_{0,\e}.
\end{aligned}
\end{equation}

According to (36) in \cite{Beirao-Liu-Mascotto-Russo:2023}, we have derived the following
\begin{equation}\label{eqn:psin2}
\|(I- {\Pitildeze})(\bm{\psi}\cdot\nbfE)\|_{0,\e} 
\lesssim\hE^{\frac12} \|{\bm{\psi}}\|_{0,\E}.
\end{equation}
On the other hand, Lemma~\ref{lemma:proes} implies
\begin{equation} \label{eqn:second-bound-boldpitildek-estimates}
\| (I- {\Pitildeze})v\|_{0,\e}
\lesssim \he^{s}\Norm{v}_{s,\e}.
\end{equation}
Collecting~\eqref{eqn:psin2} and~\eqref{eqn:second-bound-boldpitildek-estimates}
in~\eqref{eqn:difference-boldpik-boldpitildek},
we deduce
\[
\Norm{ \nabla v - \Pitildeoh \nabla v}_{0,\Omega}
\lesssim h^s \left(\Norm{v}_{s+1,\Omega}
                +\sum_{i=1}^N\Norm{v}_{s,\Gamma_i}\right).
\]
To end up with error estimates involving terms only in the domain~$\Omega$ and not on its boundary,
we use the Stein's extension operator
of Lemma~\ref{lemma:Stein}.
For any curve $\Gamma_i$ on the boundary of~$\partial\Omega$,
let $\mathcal C_i$ be a domain in $\Rbb^2$
with part of its boundary given by $\partial \mathcal{C}_i\in C^{k,1}$.
Then, applying the standard trace theorem on smooth domains
and the stability of the Stein's extension operator in~\eqref{eqn:Extension}, 
we obtain
\begin{equation}
\label{tracegammai}
\Norm{v}_{s,\Gamma_i} = \Norm{Ev}_{s,\Gamma_i}
\leq\|Ev\|_{s,\partial\mathcal{C}_i}
\lesssim \|Ev\|_{s+1,\mathcal{C}_i}
 \leq \|Ev\|_{s+1,\Rbb^d}
 \lesssim \Norm{v}_{s+1,\Omega}.
\end{equation}
The assertion follows combining the two estimates above.
\end{proof}

\section{The difference between continuous and discrete bilinear forms}
In many instances, we will need to estimate the difference between continuous and discrete bilinear forms. 
This estimation is addressed in the following preliminary lemma.

\begin{lemma}\label{lemma:L2consistency}
Let~$\E\in\calTh$, let~$\mu$ be a smooth function on~$\E$, and let~$p,~q$ denote
smooth scalar or vector-valued functions on~$\E$. 
Also, let~$Q_h^i,~i=1,~2,~3,~4$ be the operators defined on~$L^2(\E)$. We have 
\[\begin{aligned}
(\mu  p,q)_{\E}  - (\mu Q_h^1p, Q_h^2q)_{\E} 
&\lesssim \Norm{\mu p-Q_h^3(\mu p)}_{0,\E}\Norm{ q-Q_h^2 q}_{0,\E}+\Norm{p - Q_h^1 p}_{0,\E}  \Norm{q-Q_h^2q}_{0,\E}\\
&\quad+\Norm{p - Q_h^1 p}_{0,\E}\Norm{\mu q-Q_h^4(\mu q)}_{0,\E}+\SemiNorm{(Q_h^3(\mu p),q-Q_h^2 q)_{\E}} \\
&\quad+\SemiNorm{(p - Q_h^1 p,Q_h^4 (\mu q))_{\E}} .
\end{aligned}\]

\end{lemma}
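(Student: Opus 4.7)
The plan is to produce the bound by a sequence of add-and-subtract manipulations aimed at producing either two-factor differences that can be handled by Cauchy--Schwarz, or the two residual inner-product terms that are kept on the right-hand side.

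First I would split the left-hand side into the two natural pieces obtained by inserting $\mu p$ or $\mu Q_h^1 p$ as an intermediate:
\[
(\mu p,q)_{\E} - (\mu Q_h^1 p, Q_h^2 q)_{\E}
= \bigl(\mu(p-Q_h^1 p), q\bigr)_{\E} + \bigl(\mu Q_h^1 p, q - Q_h^2 q\bigr)_{\E}.
\]
For the first piece I would transfer $\mu$ onto $q$ and then insert $Q_h^4(\mu q)$:
\[
\bigl(\mu(p-Q_h^1 p), q\bigr)_{\E}
= \bigl(p-Q_h^1 p, \mu q - Q_h^4(\mu q)\bigr)_{\E}
+ \bigl(p-Q_h^1 p, Q_h^4(\mu q)\bigr)_{\E}.
\]
The first summand here is estimated by Cauchy--Schwarz and produces exactly the third term of the target bound; the second summand appears on the right-hand side as the last retained inner product.

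For the second piece I would write $\mu Q_h^1 p = \mu p - \mu(p - Q_h^1 p)$ and then insert $Q_h^3(\mu p)$ in the first summand:
\[
\bigl(\mu Q_h^1 p, q-Q_h^2 q\bigr)_{\E}
= \bigl(\mu p - Q_h^3(\mu p), q-Q_h^2 q\bigr)_{\E}
+ \bigl(Q_h^3(\mu p), q-Q_h^2 q\bigr)_{\E}
- \bigl(\mu(p-Q_h^1 p), q-Q_h^2 q\bigr)_{\E}.
\]
The first term here yields, by Cauchy--Schwarz, the first term of the stated bound; the second term is the other retained inner product on the right-hand side; and the third term, bounded by $\|\mu\|_{L^\infty(\E)}\|p-Q_h^1 p\|_{0,\E}\|q-Q_h^2 q\|_{0,\E}$, is absorbed (up to the hidden constant) into the second term of the stated bound. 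Collecting the five contributions gives the claim.

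There is no real obstacle here: the argument is purely algebraic with Cauchy--Schwarz, and the only mild subtlety is the bookkeeping to ensure that after every add-and-subtract the pieces land in one of the five summands on the right-hand side, without producing stray terms. The only implicit hypothesis being used is that $\mu \in L^\infty(\E)$ so that the factor $\|\mu\|_{L^\infty(\E)}$ can be absorbed into the hidden constant of $\lesssim$; this is consistent with the smoothness assumption on $\mu$ stated in the lemma.
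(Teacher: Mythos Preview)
Your proof is correct and is essentially the same as the paper's: both arguments perform an add-and-subtract decomposition that yields the identical five summands $(\mu p-Q_h^3(\mu p),q-Q_h^2q)$, $(Q_h^3(\mu p),q-Q_h^2q)$, $-(\mu(p-Q_h^1p),q-Q_h^2q)$, $(p-Q_h^1p,\mu q-Q_h^4(\mu q))$, and $(p-Q_h^1p,Q_h^4(\mu q))$, followed by Cauchy--Schwarz. The only cosmetic difference is the order of the initial split (you insert $\mu Q_h^1 p$ first, the paper inserts $\mu p$ against $Q_h^2 q$ first), but the resulting terms and estimates coincide.
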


\begin{proof}
    By adding and subtracting terms, we obtain
\[\begin{aligned}
     &(\mu  p,q)_{\E}   - (\mu Q_h^1p~Q_h^2q)_{\E} \\
     &= (\mu p, q-Q_h^2 q)_{\E}  + (\mu(p - Q_h^1 p), Q_h^2q)_{\E} \\
    & =(\mu p-Q_h^3(\mu p),q-Q_h^2 q)_{\E}  + (p - Q_h^1 p,\mu  Q_h^2q-Q_h^4 (\mu q))_{\E} \\
    &\quad+(Q_h^3(\mu p),q-Q_h^2 q)_{\E} +(p - Q_h^1 p,Q_h^4 (\mu q))_{\E} \\
    & =  (\mu p-Q_h^3(\mu p), q-Q_h^2 q)_{\E}  + (p - Q_h^1 p,\mu  Q_h^2q- \mu q)_{\E} \\
    &\quad+ (p - Q_h^1 p,\mu q-Q_h^4 (\mu q))_{\E} +(Q_h^3(\mu p),q-Q_h^2 q)_{\E} \\
    &\quad+(p - Q_h^1 p,Q_h^4 (\mu q)_{\E} .
\end{aligned}\]
and the result follows by the Cauchy–Schwarz inequality.
\end{proof}

By choosing different operators~$Q_h^i,~i=1$,~$2$,~$3$~and~$4$, the following result  immediately follows from the Lemma~\ref{lemma:L2consistency} and the~$L^2$ orthogonality of the project operator~$\Pik$.
\begin{lemma}\label{lemma:abcconsistency}
For~$\E\in\calTh$, for a generic~$w\in L^2(\E)$ (or~$\bm{w}\in[L^2(\E)]^d$), we define
  \[
   \calE_{\E}^{k-1}(w):=\Norm{w-\Pik w}_{0,\E},
\]
and 
\[\widetilde{{\calE}}_{\E}^{k-1}(\bm{w}):=\Norm{\bm{w}-\Pitildeo\bm{w}}_{0,\E}.\]
For any~$\uh,~\vh\in \VhE+\Pbb_k(\E)$ it holds
\[\begin{aligned}
    |\aE(\uh,\vh)- \ahE(\uh,\vh)|&\leq \EEk(a\nabla \uh)\Etildek(\nabla \vh)+C_a\Etildek(\nabla \uh)\Etildek(\nabla \vh)\\
    &\quad+\Etildek(\nabla \uh)\EEk(a\nabla \vh) +\SE((I-\Pitildenabla )\uh,(I-\Pitildenabla )\vh)\\
    &\quad+ \left|\int_{\E}\Pik(a \nabla \uh)\cdot(\nabla \vh-\Pitildeo \nabla \vh)~\dx\right|\\
    &\quad + \left|\int_\E(\nabla \uh- \Pitildeo \nabla \uh)\cdot \Pik(a \nabla \vh)~\dx\right|;\\
   | \bE(\uh,\vh)- \bhE(\uh,\vh)|
    &\leq
    \EEk(\bbm \uh)\Etildek(\nabla \vh)+C_{\bbm} \EEk(\uh)\Etildek(\nabla \vh)\\
    &\quad+\EEk(\uh)\EEk(\bbm\cdot\nabla \vh)
     +\left|\int_\E\Pik(\bbm \uh)\cdot(\nabla \vh -\Pitildeo\nabla \vh)~\dx\right|;\\
    |\cE(\uh,\vh)- \chE(\uh,\vh)|&\leq \EEk(c \uh)\EEk(\vh)+C_{c} \EEk(\uh)\EEk(\vh)+\EEk(\uh)\EEk(c\vh).
\end{aligned}\]  
\end{lemma}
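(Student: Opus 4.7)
The plan is to specialize Lemma~\ref{lemma:L2consistency} three times, once per bilinear form, choosing the four operators $Q_h^i$ to mirror the construction of $\ahE$, $\bhE$, $\chE$ in~\eqref{eqn:local bilinear forms}. The three norm-product terms of Lemma~\ref{lemma:L2consistency} will then become exactly the $\EEk$ and $\Etildek$ contributions in the statement, while the two mixed-integral remainders are either killed or preserved by the $L^2$-orthogonality of $\Pik$ against $\Pbb_{k-1}(\E)$, depending on which projection is paired against the $L^2$-orthogonal factor.

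The $c$-estimate is the easiest: take $\mu=c$, $p=\uh$, $q=\vh$ and $Q_h^1=Q_h^2=Q_h^3=Q_h^4=\Pik$; both mixed remainders vanish because $\Pik(c\uh)$ and $\Pik(c\vh)$ lie in $\Pbb_{k-1}(\E)$ while $\vh-\Pik\vh$ and $\uh-\Pik\uh$ are $L^2$-orthogonal to that space, leaving precisely the three $\EEk$-products listed. For the $a$-estimate, I would first split
\[
\aE(\uh,\vh)-\ahE(\uh,\vh)=\bigl[(a\nabla\uh,\nabla\vh)_\E-(a\Pitildeo\nabla\uh,\Pitildeo\nabla\vh)_\E\bigr]-\SE((I-\Pitildenabla)\uh,(I-\Pitildenabla)\vh),
\]
absorb $|\SE(\cdot,\cdot)|$ into the right-hand side by the triangle inequality, and apply Lemma~\ref{lemma:L2consistency} componentwise in the matrix $a$ with $Q_h^1=Q_h^2=\Pitildeo$ and $Q_h^3=Q_h^4=\Pik$. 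This time $\nabla\vh-\Pitildeo\nabla\vh$ is not $L^2$-orthogonal to $\Pbb_{k-1}(\E)$, so both mixed integrals survive as the last two terms of the stated bound, and $C_a=\|a\|_\infty$ absorbs the factor coming from the middle estimate.

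The $b$-estimate does not fit the template of Lemma~\ref{lemma:L2consistency} verbatim because $\bbm$ couples the two arguments asymmetrically ($\uh$ is projected via $\Pik$ while $\nabla\vh$ is projected via $\Pitildeo$). I would therefore unfold the proof of Lemma~\ref{lemma:L2consistency} by hand: starting from $(\uh\bbm,\nabla\vh)_\E-(\Pik\uh\,\bbm,\Pitildeo\nabla\vh)_\E$, insert $\pm(\uh\bbm,\Pitildeo\nabla\vh)_\E$ to decompose into $(\uh\bbm,\nabla\vh-\Pitildeo\nabla\vh)_\E+((\uh-\Pik\uh)\bbm,\Pitildeo\nabla\vh)_\E$. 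In the first piece, adding and subtracting $\Pik(\uh\bbm)$ yields $\EEk(\bbm\uh)\Etildek(\nabla\vh)$ plus the surviving mixed integral $(\Pik(\uh\bbm),\nabla\vh-\Pitildeo\nabla\vh)_\E$. In the second piece, rewriting $\bbm\cdot\Pitildeo\nabla\vh=\bbm\cdot\nabla\vh-\bbm\cdot(\nabla\vh-\Pitildeo\nabla\vh)$ gives the $C_\bbm$ term immediately, while for the remaining $(\uh-\Pik\uh,\bbm\cdot\nabla\vh)_\E$ an extra add-and-subtract of $\Pik(\bbm\cdot\nabla\vh)$ produces $\EEk(\uh)\EEk(\bbm\cdot\nabla\vh)$; the leftover $(\uh-\Pik\uh,\Pik(\bbm\cdot\nabla\vh))_\E$ vanishes by $L^2$-orthogonality, which is exactly why only one mixed integral appears in the second inequality.

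The main obstacle is the bookkeeping for the $b$-form: since Lemma~\ref{lemma:L2consistency} does not apply verbatim, one has to track each add-and-subtract step and verify which orthogonalities kill which residual. Everything else is routine Cauchy--Schwarz combined with the definitions of $\EEk$ and $\Etildek$.
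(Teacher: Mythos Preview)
Your proposal is correct and matches the paper's approach exactly: apply Lemma~\ref{lemma:L2consistency} with suitable choices of~$Q_h^i$ and invoke the $L^2$-orthogonality of~$\Pik$ to kill the appropriate mixed remainders. One minor simplification: contrary to your claim, the $b$-form \emph{does} fit the template of Lemma~\ref{lemma:L2consistency} verbatim with $\mu=\bbm$, $p=\uh$, $q=\nabla\vh$, $Q_h^1=\Pik$, $Q_h^2=\Pitildeo$, $Q_h^3=Q_h^4=\Pik$ (the lemma allows all four operators to differ), and then term~5 vanishes by orthogonality while term~4 survives---so your by-hand unfolding, though correct, is unnecessary.
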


\subsection{The error between the bilinear functions~$B(\upi,\vh)$ and $\Bh(\upi,\vh)$}

Firstly, we estimate from above a term measuring
the lack of polynomial consistency of the proposed method, i.e.,
the error between the bilinear functions~$B(\upi,\vh)$ and $\Bh(\upi,\vh)$. 
According to Lemma ~\ref{lemma:abcconsistency}, we also need the following results.

\begin{lemma}\label{lemma:pitildea1}
For any~$u\in H^{s+1}(\Omega)$,
$1 \le s \le k$ and for~$\upi$ as in~\eqref{eqn:L2-projection-bulk}, we have 
 \begin{equation}\label{eqn:pitildea1upi}
 \SemiNorm{\sum_{\E\in\calTh}\int_\E(\nabla \upi- \Pitildeo \nabla \upi)\cdot \Pik(a \nabla \vh)~\dx} \lesssim h^{s}\Norm{u}_{s+1,\Omega}\Norm{\vh}_{1,h}. \end{equation}

\end{lemma}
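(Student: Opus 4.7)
The plan is to mimic the template used to prove Lemma~\ref{lemma:v-pikt0v}, exploiting the polynomial nature of $\upi$ to reduce the bulk difference $\nabla\upi-\Pitildeo\nabla\upi$ to a boundary contribution on curved edges only. Fix an element $\E$ and any $\bm{q}\in[\Pbb_{k-1}(\E)]^2$. Since $\upi_{|\E}\in\Pbb_k(\E)$, integration by parts gives
\[
\int_\E \bm{q}\cdot\nabla\upi\,\dx = -\int_\E (\nabla\cdot\bm{q})\,\upi\,\dx + \sum_{\e\in\EcalE}\int_\e (\nbfE\cdot\bm{q})\,\upi\,\drm s.
\]
Subtracting the definition~\eqref{eqn:L2-tilde-projection-bulk} of $\Pitildeo\nabla\upi$ with the same test $\bm{q}$ makes the bulk term and the straight-edge contributions cancel, leaving the key identity
\[
\int_\E \bm{q}\cdot(\nabla\upi-\Pitildeo\nabla\upi)\,\dx = \sum_{\e\in\EcalEc}\int_\e (I-\Pitildeze)(\nbfE\cdot\bm{q})\,\upi\,\drm s.
\]
I will then specialize to $\bm{q}=\Pik(a\nabla\vh)\in[\Pbb_{k-1}(\E)]^2$.

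By the defining orthogonality of $\Pitildeze$, any function in $\Pbbtilde_{k-1}(\e)$ may be subtracted from the second factor, so $\upi$ can be replaced by $(I-\Pitildeze)\upi$ inside each boundary integral. Cauchy--Schwarz then reduces the task to estimating two factors on each curved edge. For the first factor, I plan to reuse~\eqref{eqn:psin2} exactly as in the proof of Lemma~\ref{lemma:v-pikt0v}, which together with the $L^2$-stability of $\Pik$ and $a\in L^\infty(\Omega)$ gives
\[
\|(I-\Pitildeze)(\nbfE\cdot\Pik(a\nabla\vh))\|_{0,\e} \lesssim \hE^{\frac12}\SemiNorm{\vh}_{1,\E}.
\]

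The second factor is the new ingredient: Lemma~\ref{lemma:proes} cannot be applied to $\upi$ directly, because $\upi$ is a polynomial on $\E$ but not on the parameter interval of the curved edge. My plan is to write $\upi = u-(u-\upi)$, apply Lemma~\ref{lemma:proes} to $u$, and handle $u-\upi$ on $\e$ by a scaled trace inequality analogous to~\eqref{trace-inverse-vh} combined with the polynomial error estimate~\eqref{eqn:Polynomial error estimates}, yielding
\[
\|(I-\Pitildeze)\upi\|_{0,\e} \lesssim \he^s\Norm{u}_{s,\e} + \hE^{s+\frac12}\SemiNorm{u}_{s+1,\E}.
\]

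Finally, I will sum the per-element contributions. Cauchy--Schwarz immediately controls the bulk piece $\sum_\E \hE^{s+1}\SemiNorm{u}_{s+1,\E}\SemiNorm{\vh}_{1,\E}$ by $h^{s+1}\Norm{u}_{s+1,\Omega}\Norm{\vh}_{1,h}$. For the boundary piece, each curved edge belongs to a single element of $\calTh$, so after Cauchy--Schwarz one encounters $\sum_{\e\in\EcalEc}\he^{2s+1}\Norm{u}_{s,\e}^2 \leq h^{2s+1}\Norm{u}_{s,\Gamma}^2$, which by the Stein extension argument~\eqref{tracegammai} is bounded by $h^{2s+1}\Norm{u}_{s+1,\Omega}^2$; this contributes a term of order $h^{s+\frac12}\Norm{u}_{s+1,\Omega}\Norm{\vh}_{1,h}$. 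Both contributions are $\lesssim h^s\Norm{u}_{s+1,\Omega}\Norm{\vh}_{1,h}$, proving the claim. The main technical obstacle is the bound on $\|(I-\Pitildeze)\upi\|_{0,\e}$; the detour through $u$ sacrifices a half power of $h$, but since $h^{s+\frac12}\leq h^{s}$ on quasi-uniform scales, this loss does not affect the claimed rate.
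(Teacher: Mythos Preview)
Your argument is correct, but it is considerably more elaborate than what the paper does. The paper proves this lemma in three lines: apply Cauchy--Schwarz directly in the bulk to get
\[
\int_\E (\nabla\upi-\Pitildeo\nabla\upi)\cdot\Pik(a\nabla\vh)\,\dx
\lesssim \|\nabla\upi-\Pitildeo\nabla\upi\|_{0,\E}\,\SemiNorm{\vh}_{1,\E},
\]
then bound the first factor by inserting $\nabla u$ twice,
\[
\|\nabla\upi-\Pitildeo\nabla\upi\|_{0,\E}
\le \|\nabla\upi-\nabla u\|_{0,\E} + \|\nabla u-\Pitildeo\nabla u\|_{0,\E} + \|\Pitildeo(\nabla u-\nabla\upi)\|_{0,\E},
\]
and finish with~\eqref{eqn:Polynomial error estimates}, the stability in Lemma~\ref{lemma:PTcont}, and Lemma~\ref{lemma:v-pikt0v} after summing. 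In other words, the curved-edge analysis is not redone here at all; it is entirely hidden inside Lemma~\ref{lemma:v-pikt0v}.

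Your route instead re-opens the boundary identity behind $\Pitildeo$ and re-runs the curved-edge estimates (essentially the mechanism of Lemma~\ref{lemma:v-pikt0v}) with $\upi$ in place of $u$, handling the mismatch $\upi\notin\Pbbtilde_{k-1}(\e)$ via the detour through $u$ and a trace inequality. This is valid and self-contained, and even yields a slightly sharper $h^{s+\frac12}$ from the boundary contribution, but it duplicates work already packaged in Lemma~\ref{lemma:v-pikt0v}. The paper's approach is shorter precisely because it reuses that lemma together with the $L^2$-stability of $\Pitildeo$; yours trades brevity for not needing the stability bound.
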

\begin{proof}

Applying the Cauchy–Schwarz inequality, we obtain
\begin{equation}\label{eqn:pitildea11}
  \begin{aligned}
    \int_\E(\nabla \upi- \Pitildeo \nabla \upi)\cdot \Pik(a \nabla \vh)~\dx \lesssim  \Etildek(\nabla \upi)\SemiNorm{\vh}_{1,\E}.
\end{aligned}  
\end{equation}

 The first factor in the right-hand side of~\eqref{eqn:pitildea11} can be easily bounded by~\eqref{eqn:Polynomial error estimates} and Lemmas~\ref{lemma:PTcont},
 \begin{equation}\label{eqn:pitildea12}
     \begin{aligned}
     &\Etildek(\nabla \upi) = \Norm{\nabla \upi-\Pitildeo\nabla \upi}_{0,\E}\\
 &\leq \Norm{\nabla \upi-\nabla u}_{0,\E}+\Norm{\nabla u-\Pitildeo\nabla u}_{0,\E}+\Norm{\Pitildeo(\nabla u-\nabla \upi)}_{0,\E}\\
  &\lesssim h^s\Norm{ u}_{s+1,\E}+\Norm{\nabla u-\Pitildeo\nabla u}_{0,\E}.
 \end{aligned}
 \end{equation}

The assertion follows summing over the elements, collecting the above inequalities, and using Lemma~\ref{lemma:v-pikt0v}.
\end{proof}

\begin{lemma}\label{lemma:pitildea2}
For any~$u\in H^{s+1}(\Omega)$,
$1 \le s \le k$ and for~$\upi$ as in~\eqref{eqn:L2-projection-bulk}, we have  
\begin{equation}\label{eqn:pitildea2}
   \begin{aligned}
   \SemiNorm{\sum_{\E\in\calTh} \int_{\E}\Pik a\nabla \upi \cdot(\nabla \vh - \Pitildeo\nabla \vh)~\dx}
   \lesssim h^{s}\Norm{u}_{s+1,\Omega}\SemiNorm{\vh}_{1,h}. 
\end{aligned} 
\end{equation}

\end{lemma}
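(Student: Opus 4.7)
The plan is to mirror the proof strategy of Lemma \ref{lemma:v-pikt0v}: collapse the bulk integral onto boundary contributions that live only on curved edges, and then bound each contribution by combining \eqref{eqn:psin2}, Lemma \ref{lemma:proes}, scaled trace inequalities, Poincar\'e \eqref{eqn:poincare}, and the Stein extension.

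Fix an element $\E\in\calTh$ and set $\bm{q}:=\Pik(a\nabla \upi)\in[\Pbb_{k-1}(\E)]^2$. Since $\vh|_\E\in\VhE$ has $\Delta \vh\in\Pbb_k(\E)$, integration by parts for $\int_\E \bm{q}\cdot\nabla \vh\,\dx$ is legitimate; subtracting it from the defining identity \eqref{eqn:L2-tilde-projection-bulk} applied with $\qbm_{k-1}=\bm{q}$ and exploiting the $L^2(\e)$-orthogonality of $\Pitildeze$ yields
\[
\int_\E \bm{q}\cdot(\nabla \vh-\Pitildeo\nabla \vh)\,\dx
=\sum_{\e\in\EcalEc}\int_\e (I-\Pitildeze)(\nbfE\cdot\bm{q})\,(I-\Pitildeze)\vh\,\ds,
\]
where the sum is restricted to curved edges because on a straight $\e$ one has $\nbfE\cdot\bm{q}\in\Pbb_{k-1}(\e)=\Pbbtilde_{k-1}(\e)$, which is fixed by $\Pitildeze$. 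Two one-dimensional factors then remain to be bounded.

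The factor involving the test function is handled at once: since $\Pitildeze$ preserves constants, $(I-\Pitildeze)\vh=(I-\Pitildeze)(\vh-\PipartialE \vh)$, and combining a scaled trace inequality with \eqref{eqn:poincare} gives $\|(I-\Pitildeze)\vh\|_{0,\e}\lesssim \hE^{1/2}|\vh|_{1,\E}$. The coefficient factor is where the real work lies: because $\bm{q}$ involves both the variable coefficient $a$ and the projection $\upi$, \eqref{eqn:psin2} cannot be invoked directly. I split
\[
\bm{q}=a\nabla u\;-\;\bigl(a\nabla u-\Pik(a\nabla u)\bigr)\;+\;\Pik\bigl(a\nabla(\upi-u)\bigr),
\]
and treat the three pieces separately: Lemma \ref{lemma:proes} at order $s$ controls $\nbfE\cdot a\nabla u$; the middle piece is not polynomial, so I settle for the scaled trace inequality combined with the $L^2$- and $H^1$-approximation properties of $\Pik$, producing a bound of order $\hE^{s-1/2}\|u\|_{s+1,\E}$; the last piece is a genuine polynomial and falls under \eqref{eqn:psin2}, with the extra factor $\|\nabla(\upi-u)\|_{0,\E}\lesssim \hE^s\|u\|_{s+1,\E}$ coming from \eqref{eqn:Polynomial error estimates}.

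Multiplying each bound by $\hE^{1/2}|\vh|_{1,\E}$, summing over the elements that carry a curved edge, and applying discrete Cauchy--Schwarz produces a contribution of order at worst $h^s\Norm{u}_{s+1,\Omega}\SemiNorm{\vh}_{1,h}$ (the first and third pieces actually give $h^{s+1/2}$ and $h^{s+1}$, hence subcritical). Any residual boundary seminorm $\Norm{a\nabla u}_{s,\Gamma}$ emerging from the first piece is transferred back to $\Norm{u}_{s+1,\Omega}$ by the smooth trace theorem on an auxiliary domain together with the Stein extension of Lemma \ref{lemma:Stein}, exactly as in \eqref{tracegammai} (tacitly assuming the usual extra regularity on $a$). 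The principal obstacle is precisely the threefold splitting above: the non-polynomial middle term blocks a direct application of \eqref{eqn:psin2}, and one must verify that the weaker-looking scaled trace bound still carries enough powers of $\hE$ to deliver the sharp $h^s$ rate.
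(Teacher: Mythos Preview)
Your argument is correct in outline but takes a more circuitous route than the paper, and it contains a minor regularity slip in the first piece of your threefold splitting.

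The paper splits earlier, in the bulk: writing $\Pik(a\nabla\upi)=\Pik(a\nabla u)+\Pik\bigl(a\nabla(\upi-u)\bigr)$, the second summand is disposed of immediately by Cauchy--Schwarz, the stability of~$\Pik$ and~$\Pitildeo$, and \eqref{eqn:Polynomial error estimates}. Only the first summand is pushed to the curved boundary, and since $\Pik(a\nabla u)$ is a genuine polynomial on~$\E$, Lemma~\ref{lemma:proes} at order $s-\tfrac12$ followed by a polynomial trace inequality yields
\[
\hE^{1/2}\bigl\|(I-\Pitildeze)(\nbfE\cdot\Pik a\nabla u)\bigr\|_{0,\e}
\lesssim \hE^{s}\|\Pik a\nabla u\|_{s,\E}\lesssim \hE^{s}\|u\|_{s+1,\E},
\]
as in \eqref{eqn:ar1}, with no appeal to the Stein extension. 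Your threefold decomposition reaches the same $h^s$ rate but introduces the non-polynomial middle term $a\nabla u-\Pik(a\nabla u)$, which forces the extra scaled-trace argument and the boundary term~$\|a\nabla u\|_{s,\Gamma}$.

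The slip concerns that boundary term: applying Lemma~\ref{lemma:proes} to $\nbfE\cdot a\nabla u$ at order~$s$ and then invoking \eqref{tracegammai} requires control of $\|a\nabla u\|_{s+1/2,\Omega}$ (or $\|a\nabla u\|_{s+1,\Omega}$ with the cruder trace), i.e.\ $u\in H^{s+3/2}(\Omega)$, which exceeds the stated hypothesis. Hence your first piece is not subcritical. The fix is routine: apply Lemma~\ref{lemma:proes} at order $s-\tfrac12$ instead, obtaining $\|(I-\Pitildeze)(\nbfE\cdot a\nabla u)\|_{0,\e}\lesssim \he^{s-1/2}\|a\nabla u\|_{s-1/2,\e}$, and then the Stein argument with the correct shift gives $\|a\nabla u\|_{s-1/2,\Gamma}\lesssim\|a\nabla u\|_{s,\Omega}\lesssim\|u\|_{s+1,\Omega}$. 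The contribution becomes exactly~$h^s$, and the lemma follows.
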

\begin{proof}
For any element~$\E\in\calTh$~and~$u\in H^{s+1}(\Omega)$, we have
\[ \begin{aligned}
  \int_{\E}\Pik a\nabla \upi \cdot(\nabla \vh - \Pitildeo\nabla \vh)~\dx& =  \int_{\E}(\Pik a\nabla \upi-\Pik a\nabla u) \cdot(\nabla \vh - \Pitildeo\nabla \vh)~\dx\\
  &\quad+ \int_{\E}\Pik a\nabla u \cdot(\nabla \vh - \Pitildeo\nabla \vh)~\dx:= I_{1}+I_{2}.
\end{aligned} \]

As for the term~$I_{1}$, we use the stability of $\Pik$ in $L^2$ norm, Lemma~\ref{lemma:PTcont} and~\eqref{eqn:Polynomial error estimates}, then deduce
\[\begin{aligned}
    I_{1}&\leq \Norm{\Pik a\nabla \upi-\Pik a\nabla u}_{0,\E}\Norm{\nabla \vh - \Pitildeo\nabla \vh}_{0,\E}\lesssim h^s\Norm{u}_{s+1,\E}\SemiNorm{\vh}_{1,\E}.
\end{aligned}\]

As for the term~$I_{2}$, we denote $\bar{v}_h = \vh-\PipartialE\vh$ and use the definition of the operator $\Pitildeo$, then 
  \[
    \begin{aligned}
  I_2 &=    \int_{\E}(\Pik a\nabla u)\cdot(\nabla \bar{v}_h - \Pitildeo\nabla \bar{v}_h)\dx\\
 &= \sum_{e\in\EcalEc}\int_{\e} (I- \Pitildeze) (\nbfE\cdot \Pik a\nabla u) \bar{v}_h \drm s \\
 &\leq \sum_{\e\in\EcalE_c}
\Norm{(I- \Pitildeze) (\nbfE\cdot \Pik a\nabla u) }_{0,\e}\Norm{\bar{v}_h }_{0,\e}.
    \end{aligned}
    \]
    
Using the approximation of the operator $\Pitildeze$,
the Poincar\'e inequality,
and the trace inequality,
we infer
the inequality
 \begin{equation}\label{eqn:ar1}
\begin{aligned}
\hE^{\frac12}\Norm{(I- \Pitildeze) (\nbfE\cdot \Pik a\nabla u)}_{0,\e}&\lesssim  \hE^{s}\Norm{\nbfE\cdot \Pik a\nabla u}_{s-\frac12,\e}\lesssim  \hE^{s}\Norm{\Pik a\nabla u}_{s-\frac12,\e}\\
&\lesssim\hE^{s}\Norm{\Pik a\nabla u}_{s,\E}
\lesssim\hE^{s}\Norm{u}_{s+1,\E}.
\end{aligned}
\end{equation}

The assertion in~\eqref{eqn:pitildea2} follows collecting the above estimates,~\eqref{trace-inverse-vh} and summing over the elements.
\end{proof}

With similar arguments as Lemma~\ref{lemma:pitildea2} we have the following result.
\begin{lemma}\label{lemma:pitildeb}
For any~$u\in H^{s+1}(\Omega)$,
$1 \le s \le k$ and for~$\upi$ as in~\eqref{eqn:L2-projection-bulk}, we have    
\begin{equation}\label{eqn:pitildeb}
   \begin{aligned}
    \SemiNorm{\sum_{\E\in\calTh} \int_{\E}\Pik \bbm \upi\cdot(\nabla \vh - \Pitildeo\nabla \vh)~\dx}\lesssim h^{s}\Norm{u}_{s,\Omega}\SemiNorm{\vh}_{1,h}.
\end{aligned} 
\end{equation}
 
\end{lemma}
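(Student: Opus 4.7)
My plan is to follow the argument of Lemma~\ref{lemma:pitildea2} almost verbatim, with $\Pik(a\nabla u)$ replaced by $\Pik(\bbm u)$ throughout. On each element $\E$, I would split
\[
\int_\E \Pik(\bbm \upi) \cdot(\nabla \vh - \Pitildeo\nabla \vh)\,\dx = I_1(\E) + I_2(\E),
\]
where $I_1(\E)$ uses $\Pik(\bbm(\upi - u))$ and $I_2(\E)$ uses $\Pik(\bbm u)$. The final bound involving $\SemiNorm{\vh}_{1,h}$ and $\Norm{u}_{s,\Omega}$ will follow by summing both pieces over $\calTh$ and applying discrete Cauchy--Schwarz.

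For $I_1(\E)$, I would apply Cauchy--Schwarz and combine the $L^2$-stability of $\Pik$, the smoothness of $\bbm$, the standard $L^2$-approximation estimate $\Norm{u - \upi}_{0,\E} \lesssim \hE^{s}\Norm{u}_{s,\E}$ (a consequence of~\eqref{eqn:Polynomial error estimates}), and Lemma~\ref{lemma:PTcont} applied to $\Norm{\nabla \vh - \Pitildeo \nabla \vh}_{0,\E}$. This immediately yields $|I_1(\E)| \lesssim \hE^{s}\Norm{u}_{s,\E}\SemiNorm{\vh}_{1,\E}$.

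For $I_2(\E)$, I would first replace $\vh$ by $\bar v_h := \vh - \PipartialE \vh$ (so that $\nabla \bar v_h = \nabla \vh$), then apply integration by parts to $\int_\E \Pik(\bbm u)\cdot \nabla \bar v_h \,\dx$ and simultaneously plug $\qbmk = \Pik(\bbm u)\in [\Pbb_{k-1}(\E)]^2$ into the definition~\eqref{eqn:L2-tilde-projection-bulk} of $\Pitildeo$. The bulk divergence contributions and the straight-edge boundary terms cancel exactly, leaving only
\[
I_2(\E) = \sum_{\e \in \EcalEc} \int_\e (I - \Pitildeze)(\nbfE\cdot \Pik(\bbm u))\,\bar v_h \,\ds.
\]
I would then mimic estimate~\eqref{eqn:ar1}: apply the approximation property of $\Pitildeze$ from Lemma~\ref{lemma:proes}, the trace inequality to exchange $\Norm{\cdot}_{s-\frac12,\e}$ for $\Norm{\cdot}_{s,\E}$, the $H^s$-stability of $\Pik$, and the smoothness of $\bbm$ to arrive at
\[
\hE^{\frac12}\Norm{(I - \Pitildeze)(\nbfE\cdot \Pik(\bbm u))}_{0,\e} \lesssim \hE^{s}\Norm{u}_{s,\E},
\]
then pair this with the scaled trace inequality~\eqref{trace-inverse-vh} for $\bar v_h$ to conclude $|I_2(\E)| \lesssim \hE^{s}\Norm{u}_{s,\E}\SemiNorm{\vh}_{1,\E}$.

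The only point deserving attention is that this produces the weaker regularity factor $\Norm{u}_{s,\Omega}$ rather than $\Norm{u}_{s+1,\Omega}$ of Lemma~\ref{lemma:pitildea2}: the reason is that the polynomial whose curved-edge trace must be controlled is $\Pik(\bbm u)$, not $\Pik(a\nabla u)$, so only $s$ derivatives of $u$ enter the final estimate through the product $\bbm u$. Beyond the routine verification of $H^s$-stability of the $L^2$ projection combined with the smoothness of $\bbm$, I do not anticipate any genuinely new obstacle compared with Lemma~\ref{lemma:pitildea2}.
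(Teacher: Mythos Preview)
Your proposal is correct and matches the paper's own approach: the paper simply states that Lemma~\ref{lemma:pitildeb} follows ``with similar arguments as Lemma~\ref{lemma:pitildea2}'', and your outline is precisely that argument with $\Pik(a\nabla u)$ replaced by $\Pik(\bbm u)$, including the observation that this substitution is what lowers the regularity requirement from $\Norm{u}_{s+1,\Omega}$ to $\Norm{u}_{s,\Omega}$.
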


\begin{theorem}  
\label{lemma:Consistency}For any~$u\in H^{s+1}(\Omega)$,
$1 \le s \le k$ and for~$\upi$ as in~\eqref{eqn:L2-projection-bulk}, we have
\begin{equation}\label{eqn:Consistency}
|B(\upi,\vh)- \Bh(\upi,\vh)|\lesssim h^s\Norm{u}_{s+1,\Omega}\Norm{\vh}_{1,h}.
\end{equation}
\end{theorem}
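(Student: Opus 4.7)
The plan is to split $B(\upi,\vh)-\Bh(\upi,\vh)$ into the three pieces $(a-\ah)(\upi,\vh)$, $(b-\bh)(\upi,\vh)$ and $(c-\ch)(\upi,\vh)$, apply Lemma~\ref{lemma:abcconsistency} element by element, and finally sum over $\E\in\calTh$ with Cauchy--Schwarz on the mesh index. The terms produced by Lemma~\ref{lemma:abcconsistency} fall into four families, which I handle separately:
\textbf{(i)} polynomial projection errors of the form $\EEk(\mu\upi)$ or $\EEk(\mu\nabla\upi)$ with $\mu\in\{a,\bbm,c\}$;
\textbf{(ii)} $L^2$--projection errors $\Etildek(\nabla \upi)$ and $\Etildek(\nabla \vh)$;
\textbf{(iii)} the stabilization contribution $\SE((I-\Pitildenabla)\upi,(I-\Pitildenabla)\vh)$;
\textbf{(iv)} the cross terms of the form $\int_\E\Pik(\mu\upi\text{ or }\mu\nabla\upi)\cdot(\nabla\vh-\Pitildeo\nabla\vh)\,\dx$ and $\int_\E(\nabla\upi-\Pitildeo\nabla\upi)\cdot\Pik(a\nabla\vh)\,\dx$.

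For family \textbf{(i)}, I insert $\pm u$ (or $\pm\nabla u$) inside the argument of $\EEk$, exploit the smoothness of the coefficients and the $L^2$ stability of $\Pik$, and then apply the polynomial error estimate~\eqref{eqn:Polynomial error estimates} to $u-\upi$ and the Bramble--Hilbert estimate to $u$; this yields an element bound of order $h^s\Norm{u}_{s+1,\E}$. The factors carrying $\vh$, namely $\EEk(\vh)$ and $\EEk(c\vh)$, are controlled by $\Norm{\vh}_{0,\E}$, and $\EEk(\bbm\cdot\nabla\vh)$ by $\SemiNorm{\vh}_{1,\E}$, using $L^2$ stability of $\Pik$. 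Family \textbf{(ii)} is treated exactly as at the beginning of the proof of Lemma~\ref{lemma:pitildea1}: a triangle inequality plus Lemmas~\ref{lemma:PTcont} and~\ref{lemma:v-pikt0v} give $\Etildek(\nabla\upi)\lesssim h^s\Norm{u}_{s+1,\E}$ after summation, whereas $\Etildek(\nabla\vh)\lesssim \SemiNorm{\vh}_{1,\E}$ follows directly from Lemma~\ref{lemma:PTcont}. Family \textbf{(iv)} is already bounded by Lemmas~\ref{lemma:pitildea1},~\ref{lemma:pitildea2} and~\ref{lemma:pitildeb}, which furnish precisely $h^s\Norm{u}_{s+1,\Omega}\SemiNorm{\vh}_{1,h}$.

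For family \textbf{(iii)} I use the continuity~\eqref{eqn:stability bounded property} of $\SE$ on $\ker(\Pitildenabla)$, noting that both arguments lie in $(\VhE+\Pbb_k(\E))\cap\ker(\Pitildenabla)$. This gives
\[
\SE((I-\Pitildenabla)\upi,(I-\Pitildenabla)\vh)\le \alpha^*\,\SemiNorm{(I-\Pitildenabla)\upi}_{1,\E}\SemiNorm{(I-\Pitildenabla)\vh}_{1,\E}.
\]
The $\vh$--factor is bounded by $\SemiNorm{\vh}_{1,\E}$ via Lemma~\ref{lemma:PTcont}. For the $\upi$--factor I add and subtract $u$ and $\Pitildenabla u$ and use the triangle inequality, so that the bound reduces to $\SemiNorm{\upi-u}_{1,\E}+\SemiNorm{u-\Pitildenabla u}_{1,\E}+\SemiNorm{\Pitildenabla(u-\upi)}_{1,\E}$. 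The first and third terms are controlled by~\eqref{eqn:Polynomial error estimates} and Lemma~\ref{lemma:PTcont} with the polynomial estimate, while the second is controlled globally by Lemma~\ref{lemma:v-pikv2}.

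Finally, I sum all the element contributions, apply Cauchy--Schwarz over $\E\in\calTh$, and obtain $|B(\upi,\vh)-\Bh(\upi,\vh)|\lesssim h^s\Norm{u}_{s+1,\Omega}\Norm{\vh}_{1,h}$, which is the claim. The step I expect to be the main obstacle is family \textbf{(iii)}: unlike families \textbf{(i)}, \textbf{(ii)} and \textbf{(iv)}, its bound is not directly read off from the preceding lemmas and requires carefully combining the $H^1$ approximation of $\Pitildenabla$ from Lemma~\ref{lemma:v-pikv2} (which is global) with the local continuity of $\Pitildenabla$ in Lemma~\ref{lemma:PTcont} and the polynomial consistency of $\upi$.
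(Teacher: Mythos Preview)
Your proposal is correct and mirrors the paper's proof: both split $B-\Bh$ into its $a$-, $b$- and $c$-parts, apply Lemma~\ref{lemma:abcconsistency}, and bound the resulting $\EEk$, $\Etildek$, stabilization and cross terms exactly as you describe (via~\eqref{eqn:Polynomial error estimates}, Lemmas~\ref{lemma:PTcont}, \ref{lemma:v-pikv2}, \ref{lemma:v-pikt0v}, \ref{lemma:pitildea1}, \ref{lemma:pitildea2}, \ref{lemma:pitildeb}, and the continuity~\eqref{eqn:stability bounded property}). The stabilization term you flag as the main obstacle is handled in the paper precisely by your add-and-subtract-$u$ argument combined with Lemma~\ref{lemma:v-pikv2}, so there is no additional difficulty there.
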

\begin{proof}
From the definitions of~$B$ and~$B_h$ we have
\begin{equation}
\label{eqn:Iabc}
\begin{aligned}
    B(\upi,\vh)- \Bh(\upi,\vh)& = (a(\upi,\vh)- \ah(\upi,\vh))-(b(\upi,\vh)- \bh(\upi,\vh))\\
    &\quad+(c(\upi,\vh)- \ch(\upi,\vh))\\
    &:= I_a+I_{\bbm}+I_c.
\end{aligned}\end{equation}

As for the term $I_a$, we use (5.14) from \cite{Beirao-Brezzi-Marini-Russo:2016} to obtain
$
   \EEk(a\nabla \upi) \lesssim h^s\Norm{u}_{s+1,\E}.
$
From~\eqref{eqn:pitildea12}, we obtain
$$
  \Etildek(\nabla \upi)\lesssim h^s\Norm{u}_{s+1,\E}+\Norm{\nabla u-\Pitildeo\nabla u}_{0,\E}.
$$

And then, using the definition the of operator~$\Etildek$ and the stability property of the projection operator~$\Pitildeo$ in Lemma~\ref{lemma:PTcont}, the terms~$\Etildek(\nabla \vh)$ and~$\Etildek( a\nabla \vh)$ can be simply bounded by~$\Norm{\vh}_{1,\E}$. 
As for the stability $\SE$,
by using~\eqref{eqn:stability bounded property}, adding and subtracting~$u$, applying Lemma~\ref{lemma:PTcont} and~\eqref{eqn:Polynomial error estimates}, we deduce
\[\begin{aligned}
 \SE((I-\Pitildenabla )\upi,(I-\Pitildenabla )\vh)
    &\lesssim \SemiNorm{(I-\Pitildenabla )\upi}_{1,\E}\SemiNorm{(I-\Pitildenabla )\vh}_{1,\E}\\
    &\lesssim (\SemiNorm{u-\upi}_{1,\E}+\SemiNorm{(I-\Pitildenabla )u}_{1,\E})\SemiNorm{(I-\Pitildenabla )\vh}_{1,\E}\\
    &\lesssim (h^s\SemiNorm{u}_{s+1,\E}+\SemiNorm{u-\Pitildenabla u}_{1,\E})\SemiNorm{\vh}_{1,\E}.
\end{aligned}\]

Thus, the term~$I_a$ follows summing over the elements
and using Lemmas~\ref{lemma:v-pikv2},~\ref{lemma:v-pikt0v},~\ref{lemma:abcconsistency},~\ref{lemma:pitildea2} and~\ref{lemma:pitildea1}:
\begin{equation}\label{eqn:Ia}
   I_a\lesssim h^s\Norm{u}_{s+1,\Omega}\Norm{\vh}_{1,h}.
\end{equation}

With similar arguments, using Lemma~\ref{lemma:pitildeb} and Lemma~5.5 in \cite{Beirao-Brezzi-Marini-Russo:2016}, we obtain, for instance,
\begin{equation}
\label{eqn:Ibc}
\begin{aligned}
  I_{\bbm}\lesssim h^s\Norm{u}_{s,\Omega}\Norm{\vh}_{1,h}\quad\text{and}\quad 
   I_{c}\lesssim h^s\Norm{u}_{s,\Omega}\Norm{\vh}_{1,h}.
\end{aligned}\end{equation}

The proof followsby substituting equations~\eqref{eqn:Ia} and~\eqref{eqn:Ibc} into~\eqref{eqn:Iabc}.
\end{proof}
\subsection{The error between the bilinear functions~$B(\uh,\vI)$ and $\Bh(\uh,\vI)$}
As for the error estimate in~$L^2$ norm, we also need
the error estimate between the bilinear functions~$B(\uh,\vI)$ and $\Bh(\uh,\vI)$. 
The following results could be deduced from Lemma~\ref{lemma:abcconsistency}.

\begin{lemma}
\label{lemma:pitildea1ui}
 For any~$u\in H^{s+1}(\Omega)$,
$1 \le s \le k$,~$v\in H^2(\Omega)$,~$\uh \in \VhE+[\Pbb_{k}(\E
)]^2$, and for~$\vI $ as in~\eqref{eqn:vik}, we have
 \begin{equation}\label{eqn:pitildea1ui}
 \SemiNorm{\sum_{\E\in\calTh}\int_\E(\nabla \uh- \Pitildeo \nabla \uh)\cdot \Pik(a \nabla \vI)~\dx }\lesssim ( h\SemiNorm{u-\uh}_{1,h}+h^{s+1}\Norm{u}_{s+1,\Omega})\Norm{v}_{2,\Omega}.
 \end{equation}   
\end{lemma}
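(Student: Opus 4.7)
The plan is to convert the bulk integral into a sum of curved-edge integrals via the defining identity of~$\Pitildeo$ and integration by parts, and then to balance the~$H^2$-regularity of~$v$ against the~$H^{s+1}$-regularity of~$u$ and of the error~$\uh-u$. The argument parallels the proof of Lemma~\ref{lemma:pitildea2}, adapted to the new pair~$(\uh,\vI)$.

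First, because~$\Pik(a\nabla \vI)\in[\Pbb_{k-1}(\E)]^2$, I use it as the test polynomial in~\eqref{eqn:L2-tilde-projection-bulk}. Integrating by parts (valid since~$\uh\in H^1(\E)$) cancels the bulk divergence term and the straight-edge contributions, leaving
\[
\int_\E \Pik(a\nabla \vI)\cdot(\nabla \uh-\Pitildeo\nabla \uh)\,\dx
= \sum_{\e\in\EcalEc}\int_\e (I-\Pitildeze)(\nbfE\cdot\Pik(a\nabla \vI))\,\uh\,\ds.
\]
I then write~$\uh = u+(\uh-u)$ and use the~$L^2(\e)$-self-adjointness of~$\Pitildeze$ to move the projector onto the second factor, reducing the right-hand side to a sum over curved edges of two terms of the form~$\int_\e A_\e\,(u-\Pitildeze u)\,\ds$ and~$\int_\e A_\e\,\bigl((\uh-u)-\Pitildeze(\uh-u)\bigr)\,\ds$, where~$A_\e := (I-\Pitildeze)(\nbfE\cdot\Pik(a\nabla \vI))$.

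The main obstacle is establishing~$\Norm{A_\e}_{0,\e}\lesssim h_\E^{1/2}\Norm{v}_{2,\E}$, since~$v$ is only~$H^2$. I split~$\vI = v+(\vI-v)$ and treat the two pieces separately. For the~$v$-piece, Lemma~\ref{lemma:proes} with regularity exponent~$1/2$, combined with the trace theorem and the~$H^1$-stability of~$\Pik$ on shape-regular elements, yields~$\Norm{(I-\Pitildeze)(\nbfE\cdot\Pik(a\nabla v))}_{0,\e}\lesssim h_\e^{1/2}\Norm{v}_{2,\E}$. For the correction, I use~$\Norm{I-\Pitildeze}\le 1$, the polynomial inverse-trace inequality~$\Norm{p}_{0,\e}\lesssim h_\E^{-1/2}\Norm{p}_{0,\E}$ for~$p\in\Pbb_{k-1}(\E)$, the~$L^2$-stability of~$\Pik$, and the bound~$\SemiNorm{v-\vI}_{1,\E}\lesssim h_\E\Norm{v}_{2,\E}$ from Lemma~\ref{lemma:DoFs-interpolant-estimates}; this also produces~$h_\E^{1/2}\Norm{v}_{2,\E}$. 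The delicate point is that only one half-power of~$h$ can be extracted from the~$v$-side, so the other half-power must come from the companion factor.

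For the companion factors, Lemma~\ref{lemma:proes} and the trace theorem provide~$\Norm{u-\Pitildeze u}_{0,\e}\lesssim h_\e^{s+1/2}\Norm{u}_{s+1,\E}$, and a Poincar\'e-type argument (subtract the~$\E$-average of~$\uh-u$, then apply the trace inequality together with~\eqref{eqn:poincare}) yields~$\Norm{(\uh-u)-\Pitildeze(\uh-u)}_{0,\e}\lesssim h_\E^{1/2}\SemiNorm{\uh-u}_{1,\E}$. Plugging the three bounds into edgewise Cauchy--Schwarz and summing over the mesh, with a final Cauchy--Schwarz across~$\calTh$ and the shape-regularity relation~$h_\e\approx h_\E$, produces the two contributions~$h^{s+1}\Norm{u}_{s+1,\Omega}\Norm{v}_{2,\Omega}$ and~$h\SemiNorm{\uh-u}_{1,h}\Norm{v}_{2,\Omega}$, which is the asserted estimate.
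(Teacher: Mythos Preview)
Your overall strategy---passing to curved-edge integrals via~\eqref{eqn:L2-tilde-projection-bulk} and splitting~$\uh=u+(\uh-u)$---is sound, and your bound $\Norm{A_\e}_{0,\e}\lesssim h_\E^{1/2}\Norm{v}_{2,\E}$ is correct. The gap is in the companion factor: the local estimate
\[
\Norm{u-\Pitildeze u}_{0,\e}\lesssim h_\e^{s+1/2}\Norm{u}_{s+1,\E}
\]
does \emph{not} follow from Lemma~\ref{lemma:proes} plus a trace inequality. Lemma~\ref{lemma:proes} yields only $\Norm{u-\Pitildeze u}_{0,\e}\lesssim h_\e^{s}\Norm{u}_{s,\e}$, and the scaled trace inequality $\Norm{u}_{s,\e}\lesssim h_\E^{-1/2}\Norm{u}_{s,\E}+h_\E^{1/2}\Norm{u}_{s+1,\E}$ leaves an unremovable term $h_\E^{s-1/2}\Norm{u}_{s,\E}$. (On curved edges one cannot recover the missing half-power by inserting a bulk polynomial, since $\Pbb_s(\E)|_\e\not\subset\Pbbtilde_{k-1}(\e)$.) With this corrected companion bound, pairing against your $K$-norm estimate $\Norm{A_\e}_{0,\e}\lesssim h_\E^{1/2}\Norm{v}_{2,\E}$ and Cauchy--Schwarz gives at best
\[
\Bigl(\sum_{\E}h_\E\Norm{v}_{2,\E}^2\Bigr)^{1/2}\Bigl(\sum_{\e}h_\e^{2s}\Norm{u}_{s,\e}^2\Bigr)^{1/2}\lesssim h^{1/2}\Norm{v}_{2,\Omega}\cdot h^{s}\Norm{u}_{s+1,\Omega},
\]
which is suboptimal by~$h^{1/2}$.

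The paper recovers the missing half-power by first subtracting the \emph{constant} vector $\Pi_0^{0,\E}(a\nabla v)$ from $\Pik(a\nabla\vI)$, splitting the sum into~$I_1+I_2$. The point of the constant is that for $I_2$ one has the sharper edge bound~\eqref{eqn:I-pitilde0enablapi},
\[
\Norm{(I-\Pitildeze)(\nbfE\cdot\Pi_0^{0,\E}(a\nabla v))}_{0,\e}\lesssim h_\E^{3/2}|v|_{2,\E}+h_\E|v|_{1,\e},
\]
because only the (fixed, $h$-independent) normal~$\nbfE$ varies along~$\e$. The edge-norm contribution $h_\E|v|_{1,\e}$ now sums along the one-dimensional boundary~$\Gamma$ to $h\Norm{v}_{2,\Omega}$ (a full power of~$h$, not $h^{1/2}$), and paired with $h^{s}\Norm{u}_{s+1,\Omega}$ from~\eqref{eqn:I-pitilde0ebar} this yields the optimal~$h^{s+1}$. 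The remainder~$I_1$ is handled entirely in the bulk, where the factor $\Pik(a\nabla\vI)-\Pi_0^{0,\E}(a\nabla v)$ already carries an~$O(h)$ smallness. Without this splitting your edge argument cannot close at the stated rate.
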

\begin{proof}
Set~$\bar{u}_h := \uh-\PipartialE\uh$. 
Obviously,
we have $ \nabla \bar u_h  = \nabla\uh$  
and arrive at
\[
  \begin{aligned}
&\sum_{\E\in\calTh}\int_\E(\nabla \uh- \Pitildeo \nabla \uh)\cdot \Pik(a \nabla \vI)~\dx\\
& =  \sum_{\E\in\calTh}\int_\E(\nabla \uh- \Pitildeo \nabla \uh)\cdot (\Pik(a \nabla \vI)-\Pi_{0}^{0,\E}(a\nabla v))~\dx\\
&\quad+\sum_{\E\in\calTh}\int_\E(\nabla \bar{u}_h- \Pitildeo \nabla \bar{u}_h)\cdot \Pi_{0}^{0,\E}(a\nabla v)~\dx\\
&: =I_1+I_2.
\end{aligned}  
\]

Like as~\eqref{eqn:pitildea11} and~\eqref{eqn:pitildea12}, using Lemma~\ref{lemma:DoFs-interpolant-estimates} and~\ref{lemma:v-pikt0v}, we have 
\[
  \begin{aligned}
    I_1 
    &\lesssim  (\SemiNorm{ \uh- u}_{1,h}+\Norm{\nabla u-\Pitildeoh\nabla u}_{0,\Omega})(\SemiNorm{\vI-v}_{1,h}+\Norm{\nabla v - \Pi_{k-1}^{0,h} \nabla v }_{0,\Omega}+\Norm{\nabla v - \Pi_0^{0,h} \nabla v }_{0,\Omega})\\
    &\lesssim ( h\SemiNorm{u-\uh}_{1,h}+h^{s+1}\Norm{u}_{s+1,\Omega})\Norm{v}_{2,\Omega}.
\end{aligned}  
\]

We handle the term~$I_2$ by employing the definition of the operator~$\Pitildeo$:
\[
    \begin{aligned}
  I_2  &=\sum_{\E\in\calT_h}\sum_{e\in\EcalEc}\int_{\e} (I- \Pitildeze) (\nbfE\cdot \Pi_0^{0,\E}( a\nabla v)) \bar{u}_h \drm s\\
 &\leq \sum_{\E\in\calT_h}\sum_{\e\in\EcalE_c} 
\Norm{(I- \Pitildeze) (\nbfE\cdot \Pi_0^{0,\E}( a\nabla v)) }_{0,\e}\Norm{(I- \Pitildeze)\bar{u}_h}_{0,\e}.
    \end{aligned}
    \]
    
We apply Lemma~\ref{lemma:proes},
note that~$\Pi_{0}^{0,\E} \nabla v$ is a constant vector,
recall that~$\nbfE$ is fixed as the curved boundary is fixed
whence~$\Norm{\nbfE}_{W^{1,\infty}(\e)} \lesssim 1$,
 and deduce 
    \[
\begin{aligned}
\|(I-\Pitildeze) (\nbfE\cdot  \Pi_0^{0,\E}( a\nabla v))\|_{0,\e}
&\lesssim \he\|\nbfE\cdot \Pi_0^{0,\E}( a\nabla u)\|_{1,\e} \lesssim  \he\|\Pi_0^{0,\E}( a\nabla v)\|_{1,\e}.
 \end{aligned}
\]

On the other hand, using again that~$\Pi_0^{0,\E}( a\nabla v)$
is a constant vector,
whence~$|\Pi_0^{0,\E}( a\nabla v)|_{1,\e}=0$.
By applying a polynomial trace inverse inequality and the approximation properties~$\Pi_0^{0,\E}$,
we can write
\begin{equation}\label{eqn:I-pitilde0enablapi}
\begin{aligned}
&\|(I-\Pitildeze) (\nbfE\cdot \nabla \Pi_0^{0,\E}( a\nabla v))\|_{0,\e}\lesssim  \he\|\Pi_0^{0,\E}( a\nabla v)\|_{0,\e}\\
&\lesssim  \he\|  a\nabla v-\Pi_0^{0,\E}( a\nabla v)\|_{0,\e}+\he\| a\nabla v\|_{0,\e}\\
&\lesssim  \hE^{\frac12}\|  a\nabla v-\Pi_0^{0,\E}( a\nabla v)\|_{0,\E}+\hE^{\frac32}|  a\nabla v-\Pi_0^{0,\E}( a\nabla v)|_{1,\E}+\hE|v|_{1,\e}\\
 &\lesssim \hE^{\frac32}| v|_{2,\E}+\hE|v|_{1,e},
 \end{aligned}
 \end{equation}
and using~\eqref{trace-inverse-vh}
\begin{equation}
\label{eqn:I-pitilde0ebar}
\begin{aligned}
& \|(I-\Pitildeze)\bar{u}_h\|_{0,\e}\
\leq\|(I-\Pitildeze)(\bar{u}-\bar{u}_h)\|_{0,\e}
+ \|(I-\Pitildeze)\bar{u}\|_{0,\e}
\\
&\leq \|\bar{u}-\bar{u}_h\|_{0,\e}
        +\he^{s} \|\bar{u}\|_{s,\e}
        \lesssim  (\hE^{\frac12}\SemiNorm{u-\uh}_{1,\E}
        +\hE^{s}\Norm{u}_{s,\e}).
 \end{aligned}
 \end{equation}
 
Similar to equation~\eqref{tracegammai}, we further obtain
\[\begin{aligned}
    \left(\sum_{\E\in\calT_h}\sum_{\e\in\EcalE_c}\|(I-\Pitildeze) (\nbfE\cdot \nabla \Pi_0^{0,\E}( a\nabla v))\|_{0,\e}^2\right)^{\frac12}&\lesssim h^{\frac32}| v|_{2,\Omega}+h(\sum_{i=1}^N\|v\|^2_{1,\Gamma_i})^{\frac12}\\&\lesssim h\Norm{ v}_{2,\Omega},
\end{aligned}\]
and 
\[\begin{aligned}
    \left(\sum_{\E\in\calT_h}\sum_{\e\in\EcalE_c}\Norm{(I- \Pitildeze)\bar{u}_h}_{0,\e} ^2\right)&\lesssim h^{\frac12}\SemiNorm{u-\uh}_{1,h}
        +h^{s}(\sum_{i=1}^N\Norm{u}_{s,\Gamma_i}^2)^{\frac12}\\&\lesssim h^{\frac12}\SemiNorm{u-\uh}_{1,h}
        +h^{s}\Norm{u}_{s+1,\Omega}.
\end{aligned}\]

This conclude the bound on the term~$I_2$:
\[I_2\lesssim h(\SemiNorm{u-\uh}_{1,h}
        +h^{s}\Norm{u}_{s+1,\Omega})\Norm{ v}_{2,\Omega}.\]

The assertion follows from the above estimates.        
\end{proof}
\begin{lemma}\label{lemma:pitildea2I}

For any~$u\in H^{s+1}(\Omega)$,
$1 \le s \le k$,~$v\in H^2(\Omega)$,~$\uh \in \VhE+[\Pbb_{k}(\E
)]^2$, and for~$\vI $ as in~\eqref{eqn:vik}, we have
\begin{equation}\label{eqn:pitildea2I}
   \begin{aligned}
   \SemiNorm{\sum_{\E\in\calTh} \int_{\E}\Pik a\nabla \uh \cdot(\nabla \vI - \Pitildeo\nabla \vI)~\dx}\lesssim( h\SemiNorm{u-\uh}_{1,h}+h^{s+1}\Norm{u}_{s+1,\Omega})\Norm{v}_{2,\Omega}.
\end{aligned} 
\end{equation}
\end{lemma}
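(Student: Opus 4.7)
The plan is to mimic the proof of Lemma~\ref{lemma:pitildea2}, but to exploit the extra regularity $v \in H^2(\Omega)$ on the interpolant side. I first split $\Pik a \nabla \uh = \Pik a \nabla(\uh - u) + \Pik a \nabla u$, producing $I = I_1 + I_2$. For $I_1$, Cauchy--Schwarz, the $L^2$-stability of $\Pik$, the triangle inequality $\|\nabla \vI - \Pitildeo \nabla \vI\|_{0,\E} \lesssim \SemiNorm{\vI - v}_{1,\E} + \|\nabla v - \Pitildeo \nabla v\|_{0,\E}$ (together with the stability of $\Pitildeo$ from Lemma~\ref{lemma:PTcont}), and Lemmas~\ref{lemma:DoFs-interpolant-estimates}--\ref{lemma:v-pikt0v} applied with $s = 1$, yield $|I_1| \lesssim h \SemiNorm{u - \uh}_{1,h}\|v\|_{2,\Omega}$. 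For $I_2$, since $\Pik a \nabla u \in [\Pbb_{k-1}(\E)]^2$, I plug it as $\qbmk$ into~\eqref{eqn:L2-tilde-projection-bulk} and subtract the standard integration-by-parts identity for $\int_\E \Pik a \nabla u \cdot \nabla \vI\, \dx$; bulk contributions and straight-edge contributions cancel, leaving
\[
I_2 = \sum_{\E \in \calTh}\sum_{\e \in \EcalEc} \int_\e (I - \Pitildeze)(\nbfE \cdot \Pik a \nabla u)\, \vI \,\ds.
\]
Using the $L^2(\e)$-self-adjointness of $\Pitildeze$ and the identity $\Pitildeze \vI = \Pitildeze v$ dictated by the edge DoFs of $\vI$, I may replace the $\vI$ factor by $(I - \Pitildeze)\vI = (\vI - v) + (I - \Pitildeze) v$, and then apply Cauchy--Schwarz edgewise.

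The first factor is already controlled by inequality~\eqref{eqn:ar1} in the proof of Lemma~\ref{lemma:pitildea2}, namely $\|(I - \Pitildeze)(\nbfE \cdot \Pik a \nabla u)\|_{0,\e} \lesssim \hE^{s - 1/2}\|u\|_{s+1,\E}$, which once summed over the curved boundary elements yields $h^{s-1/2} \|u\|_{s+1,\Omega}$. The main obstacle is the second factor: I need the sharpened bound $(\sum_\e \|(I - \Pitildeze) \vI\|_{0,\e}^2)^{1/2} \lesssim h^{3/2} \|v\|_{2,\Omega}$, in place of the $h^{1/2}$ scaling that a trace-inverse inequality on $\bar v_I$ would deliver. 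I would estimate $\|\vI - v\|_{0,\e} \lesssim h^{3/2}\|v\|_{2,\E}$ by combining the trace inequality with Lemma~\ref{lemma:DoFs-interpolant-estimates}, and $\|(I - \Pitildeze) v\|_{0,\e} \lesssim \he^{3/2}\|v\|_{3/2,\e}$ from Lemma~\ref{lemma:proes} applied at fractional order; summation over curved edges, followed by the trace embedding $H^2(\Omega) \hookrightarrow H^{3/2}(\partial\Omega)$ on each smooth boundary piece and Stein's extension (Lemma~\ref{lemma:Stein}) to pass from boundary norms to $\|v\|_{2,\Omega}$, produces the claimed $h^{3/2}$ rate. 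Multiplying the two factors gives the sharp $h^{s - 1/2} \cdot h^{3/2} = h^{s + 1}$ scaling for $I_2$, and combining with the $I_1$ estimate delivers the claim.
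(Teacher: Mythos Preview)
Your argument is essentially the paper's argument when $k\ge 2$: the same splitting $I_1+I_2$, the same reduction of $I_2$ to curved-edge integrals via~\eqref{eqn:L2-tilde-projection-bulk}, the same use of~\eqref{eqn:ar1} for the first factor, and your decomposition $(I-\Pitildeze)\vI=(\vI-v)+(I-\Pitildeze)v$ via the edge DoFs is equivalent to the paper's bound~\eqref{eqn:I-pitilde0ebar} applied to $\bar v_I$.

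There is, however, a genuine gap in the case $k=1$. Your key step for the second factor is
\[
\|(I-\Pitildeze)v\|_{0,\e}\lesssim \he^{3/2}\|v\|_{3/2,\e},
\]
which you justify by Lemma~\ref{lemma:proes} ``at fractional order''. But Lemma~\ref{lemma:proes} requires $s\le n+1=k$, and for $k=1$ the operator $\Pitildeze=\Pitilde_0^{0,\e}$ projects onto constants, so the best possible rate is $\|(I-\Pitilde_0^{0,\e})v\|_{0,\e}\lesssim\he\|v\|_{1,\e}$; no higher power of $\he$ is available regardless of the smoothness of $v$. With only this bound, your product of factors yields $h^{s-1/2}\cdot h = h^{3/2}$ for $s=k=1$, which misses the required $h^{s+1}=h^2$ by half an order.

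The paper recovers the missing $h^{1/2}$ for $k=1$ by sharpening the \emph{first} factor instead: since $\Pi_0^{0,\E}(a\nabla u)$ is a constant vector, one has the improved estimate~\eqref{eqn:I-pitilde0enablapi}, giving $\|(I-\Pitildeze)(\nbfE\cdot\Pi_0^{0,\E}(a\nabla u))\|_{0,\e}\lesssim \hE^{3/2}|u|_{2,\E}+\hE|u|_{1,\e}$. Pairing this with the $h^{1/2}$-type bound on the second factor (as in~\eqref{eqn:I-pitilde0ebar}) and summing gives the correct $h^2$ rate. You need to add this case distinction; the remainder of your proof is fine.
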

\begin{proof}
For~$u\in H^{s+1}(\Omega)$, we have
\[ \begin{aligned}
  &\sum_{\E\in\calTh} \int_{\E}\Pik a\nabla \uh \cdot(\nabla \vI - \Pitildeo\nabla \vI)~\dx \\
  &= \sum_{\E\in\calTh} \int_{\E}(\Pik a\nabla \uh-\Pik a\nabla u) \cdot(\nabla \vI - \Pitildeo\nabla \vI)~\dx\\
  &\quad+\sum_{\E\in\calTh} \int_{\E}\Pik a\nabla u \cdot(\nabla \vI - \Pitildeo\nabla \vI)~\dx:= I_{1}+I_{2}.
\end{aligned} \]

We handle the term~$I_{1}$, employing the stability of the operator $\Pikh$, 
adding and subtracting~$v$, 
and using Lemma~\ref{lemma:PTcont}, then deduce
\[\begin{aligned}
    I_{1}&\leq \Norm{\Pikh a\nabla \uh-\Pikh a\nabla u}_{0,\Omega}\Norm{\nabla \vI - \Pitildeoh\nabla \vI}_{0,\Omega}\\
      &\lesssim \SemiNorm{u-\uh}_{1,h}(\SemiNorm{\vI-v}_{1,h}+\Norm{\nabla v-\Pitildeoh\nabla v}_{0,\Omega}).
\end{aligned}\]

As for the term~$I_{2}$, we denote $\bar{v}_I = \vI-\PipartialE\vI$ and use the definition of the operator $\Pitildeo$, then 
    \[
    \begin{aligned}
  I_2 &= \sum_{\E\in\calTh}   \int_{\E}(\Pik a\nabla u)\cdot(\nabla \bar{v}_I - \Pitildeo\nabla \bar{v}_I)\dx\\
 &=\sum_{\E\in\calTh} \sum_{e\in\EcalEc}\int_{\e} (I- \Pitildeze) (\nbfE\cdot \Pik a\nabla u) \bar{v}_I \drm s \\
 &\leq \sum_{\E\in\calTh}\sum_{\e\in\EcalE_c}
\Norm{(I- \Pitildeze) (\nbfE\cdot \Pik a\nabla u) }_{0,\e}\Norm{(I- \Pitildeze) \bar{v}_I }_{0,\e}.
    \end{aligned}
    \]
    
  For the case~$k=1$, as in the estimate of the term~$I_{2}$ in Lemma~\ref{lemma:pitildea1ui},
we deduce 
\[I_2\lesssim  h(\SemiNorm{v-\vI}_{1,h}
        +h\Norm{v}_{2,\Omega})\Norm{u}_{2,\Omega}\lesssim  h^2\Norm{u}_{2,\Omega}\Norm{v}_{2,\Omega}.\]
        
For the case~$k\geq2$, thanks to~\eqref{eqn:ar1}, we can write
 \[
\begin{aligned}
h^{\frac12}\Norm{(I- \Pitildeze) (\nbfE\cdot \Pik a\nabla u)}_{0,\e}& \lesssim h^s\Norm{u}_{s+1,\E},
\end{aligned}
\]

and from~\eqref{eqn:I-pitilde0ebar}, we infer
\[
\begin{aligned}
 \hE^{-\frac12}\|(I- \Pitildeze)\bar{v}_I\|_{0,\e}
 \lesssim |\vI-v|_{1,\E}+\hE\|v\|_{\frac32,\e}.
\end{aligned}
\]
    
Therefore, using Cauchy–Schwarz inequality and likes as~\eqref{tracegammai}, we further obtain
\[I_2\lesssim h^s\Norm{u}_{s+1,\Omega}(\SemiNorm{v-\vI}_{1,h}+h\Norm{v}_{2,\Omega}).\]

The assertion in~\eqref{eqn:pitildea2I} follows from combining the above estimates,
using the approximation properties of~$\vI$ in Lemma~\ref{lemma:DoFs-interpolant-estimates}, 
the approximation properties of~$\Pitildeoh$ in Lemma~\ref{lemma:v-pikt0v}.
\end{proof}

As in the estimate of  Lemma~\ref{lemma:pitildea2I} we have the following result:
\begin{lemma}\label{lemma:pitildeb2}
For any~$u\in H^{s+1}(\Omega)$,
$1 \le s \le k$,~$v\in H^2(\Omega)$,~$\uh \in \VhE+[\Pbb_{k}(\E
)]^2$, and for~$\vI $ as in~\eqref{eqn:vik}, we have
 \begin{equation}\label{eqn:pitildeb2}
   \begin{aligned}
   \SemiNorm{\sum_{\E\in\calTh} \int_{\E}\Pik \bbm \uh\cdot(\nabla \vI - \Pitildeo\nabla \vI)~\dx}\lesssim( h\Norm{u-\uh}_{0,\Omega}+h^{s+1}\Norm{u}_{s,\Omega})\Norm{v}_{2,\Omega}.
\end{aligned} 
\end{equation}
\end{lemma}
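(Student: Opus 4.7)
The strategy follows the same template as Lemma~\ref{lemma:pitildea2I}, with the key structural difference that the first factor $\Pik\bbm\uh$ is zero\nobreakdash-order in $\uh$ rather than first\nobreakdash-order (as $\Pik a\nabla\uh$ was). Consequently, the final bound picks up $\Norm{u-\uh}_{0,\Omega}$ in place of $\SemiNorm{u-\uh}_{1,h}$, and only $\Norm{u}_{s,\Omega}$ in place of $\Norm{u}_{s+1,\Omega}$. The plan is to add and subtract $\Pik\bbm u$ inside the integral and split
\[
\sum_{\E\in\calTh}\int_\E \Pik\bbm\uh\cdot(\nabla\vI-\Pitildeo\nabla\vI)~\dx
= I_1+I_2,
\]
where $I_1$ collects the $\Pik\bbm(\uh-u)$ contribution and $I_2$ collects the $\Pik\bbm u$ contribution.

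For $I_1$, I would use the $L^2$\nobreakdash-stability of $\Pikh$ on the first factor and, on the second factor, the triangle inequality $\Norm{\nabla\vI-\Pitildeoh\nabla\vI}_{0,\Omega}\le\SemiNorm{\vI-v}_{1,h}+\Norm{\nabla v-\Pitildeoh\nabla v}_{0,\Omega}$ together with Lemma~\ref{lemma:PTcont}. Combining this with the interpolation estimate of Lemma~\ref{lemma:DoFs-interpolant-estimates} applied to $\vI-v$ and the projection estimate of Lemma~\ref{lemma:v-pikt0v} applied to $\nabla v-\Pitildeoh\nabla v$ yields $I_1\lesssim h\Norm{u-\uh}_{0,\Omega}\Norm{v}_{2,\Omega}$.

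For $I_2$, I would set $\bar v_I:=\vI-\PipartialE\vI$ and apply the definition of $\Pitildeo$ exactly as in Lemma~\ref{lemma:pitildea2I} to rewrite
\[
I_2 \leq \sum_{\E\in\calTh}\sum_{\e\in\EcalEc}
\Norm{(I-\Pitildeze)(\nbfE\cdot\Pik\bbm u)}_{0,\e}\,\Norm{(I-\Pitildeze)\bar v_I}_{0,\e}.
\]
For $k\ge 2$, the first factor is bounded following the chain of inequalities~\eqref{eqn:ar1}, but with $a\nabla u$ replaced by $\bbm u$, giving $\he^{1/2}\Norm{(I-\Pitildeze)(\nbfE\cdot\Pik\bbm u)}_{0,\e}\lesssim\hE^{s}\Norm{u}_{s,\E}$; note that the order in $u$ drops by one since $u$ now enters at zero order. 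The second factor is handled by~\eqref{eqn:I-pitilde0ebar}, yielding $\hE^{-1/2}\Norm{(I-\Pitildeze)\bar v_I}_{0,\e}\lesssim\SemiNorm{\vI-v}_{1,\E}+\hE\Norm{v}_{3/2,\e}$. Summing, applying Cauchy--Schwarz, and promoting the edge norms $\Norm{v}_{3/2,\Gamma_i}$ to $\Norm{v}_{2,\Omega}$ by means of the Stein extension argument~\eqref{tracegammai}, then invoking Lemma~\ref{lemma:DoFs-interpolant-estimates} for $\SemiNorm{v-\vI}_{1,h}$, gives $I_2\lesssim h^{s+1}\Norm{u}_{s,\Omega}\Norm{v}_{2,\Omega}$. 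The case $k=1$ must be treated separately, mimicking the $I_2$ argument of Lemma~\ref{lemma:pitildea1ui}: replace $\Pik\bbm u$ by the elementwise constant $\Pi_0^{0,\E}(\bbm u)$, exploit its $H^1$\nobreakdash-seminorm vanishing, and use the polynomial trace inverse inequality together with approximation of $\Pi_0^{0,\E}$ to recover the same order $h^{s+1}\Norm{u}_{s,\Omega}\Norm{v}_{2,\Omega}$ with $s=1$.

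The main obstacle I expect is the clean bookkeeping between the $k=1$ and $k\ge 2$ cases in the estimate of $I_2$: in the former the first factor does not admit the regularity argument~\eqref{eqn:ar1} directly (since $\Pik\bbm u$ is a constant) and must be handled via the projection onto constants plus a local inverse inequality; in the latter, one has to track carefully that replacing $a\nabla u$ by $\bbm u$ reduces the Sobolev regularity index by exactly one, which is precisely what produces the $\Norm{u}_{s,\Omega}$ factor in the statement rather than $\Norm{u}_{s+1,\Omega}$. Once both cases are unified, the conclusion follows by combining the bounds on $I_1$ and $I_2$.
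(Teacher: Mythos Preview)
Your proposal is correct and follows exactly the approach the paper indicates: the paper simply states that the lemma is obtained ``as in the estimate of Lemma~\ref{lemma:pitildea2I}'', and your decomposition into $I_1$ and $I_2$, the treatment of the curved-edge term via the definition of $\Pitildeo$, and the separate handling of $k=1$ versus $k\ge 2$ all mirror that proof with $\bbm u$ in place of $a\nabla u$. Your observation that the regularity index drops by one (yielding $\Norm{u}_{s,\Omega}$ and $\Norm{u-\uh}_{0,\Omega}$) is precisely the modification needed.
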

 Then, we proceed to introduce  the following consistency.
\begin{theorem}\label{lemma:ConsistencyL2}
For any~$u\in H^{s+1}(\Omega)$,
$1 \le s \le k$,~$v\in H^2(\Omega)$,~$\uh \in \VhE+[\Pbb_{k}(\E
)]^2$, and for~$\vI $ as in~\eqref{eqn:vik}, it holds
\begin{equation}\label{eqn:ConsistencyL2}
|B(\uh,\vI)- \Bh(\uh,\vI)|\lesssim( h\Norm{u-\uh}_{1,h}+h^{s+1}\Norm{u}_{s+1,\Omega})\Norm{v}_{2,\Omega}.
\end{equation}
\end{theorem}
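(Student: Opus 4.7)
The plan is to mirror the structure of Theorem~\ref{lemma:Consistency}, splitting
\[
B(\uh,\vI) - \Bh(\uh,\vI) = I_a + I_b + I_c
\]
into the differences of the $a$-, $b$-, and $c$-forms, and bounding each using Lemma~\ref{lemma:abcconsistency} together with the three auxiliary Lemmas \ref{lemma:pitildea1ui}, \ref{lemma:pitildea2I}, and \ref{lemma:pitildeb2}, which already handle the boundary-type terms that feature $\vI$ in the second slot.

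For $I_a$, Lemma~\ref{lemma:abcconsistency} produces three families of terms: (i) the projection errors $\EEk(a\nabla \uh)$, $\EEk(a\nabla \vI)$, $\Etildek(\nabla \uh)$, $\Etildek(\nabla \vI)$; (ii) the stabilization contribution $\SE((I-\Pitildenabla)\uh,(I-\Pitildenabla)\vI)$; and (iii) the two boundary correction terms already controlled by Lemmas~\ref{lemma:pitildea1ui} and \ref{lemma:pitildea2I}. The core technique for (i) is to add and subtract the smooth functions: for instance
\[
\EEk(a\nabla \uh) \le \EEk(a\nabla (u-\uh)) + \EEk(a\nabla u) \lesssim \SemiNorm{u-\uh}_{1,\E} + h^{s}\Norm{u}_{s+1,\E},
\]
by $L^2$-stability of $\Pik$ and the argument used in the proof of Theorem~\ref{lemma:Consistency}; analogously $\Etildek(\nabla \vI) \lesssim \SemiNorm{v-\vI}_{1,\E} + \Norm{\nabla v - \Pitildeo \nabla v}_{0,\E} \lesssim h\Norm{v}_{2,\E}$ by Lemmas~\ref{lemma:DoFs-interpolant-estimates} and \ref{lemma:v-pikt0v}. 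For (ii) I apply \eqref{eqn:stability bounded property} and then bound $\SemiNorm{(I-\Pitildenabla)\uh}_{1,\E} \lesssim \SemiNorm{u-\uh}_{1,\E} + \SemiNorm{u-\Pitildenabla u}_{1,\E}$ using the triangle inequality and Lemma~\ref{lemma:v-pikv2}, and symmetrically $\SemiNorm{(I-\Pitildenabla)\vI}_{1,\E} \lesssim h \Norm{v}_{2,\E}$. A final element-wise Cauchy--Schwarz then gives $I_a \lesssim (h\SemiNorm{u-\uh}_{1,h} + h^{s+1}\Norm{u}_{s+1,\Omega})\Norm{v}_{2,\Omega}$.

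The treatment of $I_b$ is fully analogous, invoking Lemma~\ref{lemma:pitildeb2} in place of Lemma~\ref{lemma:pitildea2I} and using $L^2$-stability of $\Pik$ to replace $\bbm\uh$ with $\bbm u$ modulo a $\Norm{u-\uh}_{0,\Omega}\lesssim\SemiNorm{u-\uh}_{1,h}$ remainder. For $I_c$ no $\Pitildeo$-type boundary correction appears, so one can follow verbatim the scalar argument of Lemma~5.5 of \cite{Beirao-Brezzi-Marini-Russo:2016}, applying the same add-and-subtract trick to $\EEk(c\uh)\EEk(\vI)$, $\EEk(\uh)\EEk(\vI)$, and $\EEk(\uh)\EEk(c\vI)$ to obtain the same bound.

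The main obstacle is balancing the orders of convergence. Because $v$ only lies in $H^2(\Omega)$, its interpolant concedes only one power of $h$ against $\Norm{v}_{2,\Omega}$, whereas $\uh$ is accessible only through the discrete error $\SemiNorm{u-\uh}_{1,h}$, for which no additional power of $h$ is available. We must therefore ensure that every product coming from Lemma~\ref{lemma:abcconsistency} decomposes cleanly into an $(h\SemiNorm{u-\uh}_{1,h})\Norm{v}_{2,\Omega}$ piece and an $(h^{s+1}\Norm{u}_{s+1,\Omega})\Norm{v}_{2,\Omega}$ piece; the three auxiliary Lemmas~\ref{lemma:pitildea1ui}--\ref{lemma:pitildeb2} already guarantee this split for the genuine curved-edge contributions, and the add-and-subtract estimates above extend it to the projection and stabilization terms, whence the claim.
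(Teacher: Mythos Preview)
Your proposal is correct and follows essentially the same route as the paper: the paper's proof is a one-line reference back to Theorem~\ref{lemma:Consistency}, adding and subtracting~$u$ and invoking Lemmas~\ref{lemma:pitildea1ui}, \ref{lemma:pitildea2I}, and~\ref{lemma:pitildeb2}, which is exactly the decomposition and set of tools you describe. Your write-up simply spells out in more detail how the projection-error and stabilization terms from Lemma~\ref{lemma:abcconsistency} are handled via the add-and-subtract trick and Lemmas~\ref{lemma:PTcont}, \ref{lemma:v-pikv2}, \ref{lemma:DoFs-interpolant-estimates}, \ref{lemma:v-pikt0v}.
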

\begin{proof}
 As in the estimate of Theorem \ref{lemma:Consistency}, we deduce~\eqref{eqn:ConsistencyL2} with adding and subtracting~$u$, using Lemma~\ref{lemma:pitildea1ui}, \ref{lemma:pitildea2I} and~\ref{lemma:pitildeb2}.
\end{proof}
\subsection{The error between the bilinear functions~$B(\uh,\vI)$ and $\Bh(\uh,\vI)$ for stability analysis }
To derive a uniqueness and existence
result for~$\uh$, we also need
the error between the bilinear functions~$B(\uh,\vI)$ and $\Bh(\uh,\vI)$. And then the following results could be deduced from Lemma~\ref{lemma:abcconsistency}.

\begin{lemma}\label{lemma:adiffah1}
For any~$v\in H^2(\Omega)$,~$\uh \in \VhE+[\Pbb_{k}(\E
)]^2$, and for~$\vI $ as in~\eqref{eqn:vik}, we have
\begin{equation}\label{eqn:adiffah1}
   \begin{aligned}
   \SemiNorm{\sum_{\E\in\calTh} \int_{\E}\Pik a\nabla \uh \cdot(\nabla \vI - \Pitildeo\nabla \vI)~\dx}\lesssim h\Norm{\uh}_{1,h}\Norm{v}_{2,\Omega}.
\end{aligned} 
\end{equation}
\end{lemma}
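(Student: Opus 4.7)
The plan is to use the same direct strategy employed for the $I_1$ contribution in the proof of Lemma~\ref{lemma:pitildea2I}. Because no reference function~$u$ appears on the right-hand side and the target bound contains a full power of~$h$, the edge-based refinement used for the $I_2$ term in that lemma is not needed here: a plain element-wise Cauchy--Schwarz combined with $L^2$-stability and standard approximation results will do the job.

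First I would apply Cauchy--Schwarz element-wise and then globally to obtain
\[
\SemiNorm{\sum_{\E\in\calTh}\int_{\E}\Pik(a\nabla \uh)\cdot(\nabla \vI - \Pitildeo\nabla \vI)~\dx}
\le \Norm{\Pikh(a\nabla \uh)}_{0,\Omega}\,\Norm{\nabla \vI - \Pitildeoh\nabla \vI}_{0,\Omega},
\]
and bound the first factor by $\SemiNorm{\uh}_{1,h}$ using the $L^2$-boundedness of $\Pikh$ together with the uniform bound on the matrix~$a$. For the second factor, I would insert $\pm\nabla v$ and $\pm\Pitildeoh\nabla v$ and invoke the triangle inequality together with the $L^2$-stability of $\Pitildeo$ applied to $\nabla(\vI - v)$, to write
\[
\Norm{\nabla \vI - \Pitildeoh\nabla \vI}_{0,\Omega}
\lesssim \SemiNorm{v-\vI}_{1,h} + \Norm{\nabla v - \Pitildeoh\nabla v}_{0,\Omega}.
\]
Applying Lemma~\ref{lemma:DoFs-interpolant-estimates} and Lemma~\ref{lemma:v-pikt0v} with $s=1$ (permitted since $v\in H^2(\Omega)$), both terms on the right-hand side are dominated by $h\Norm{v}_{2,\Omega}$, yielding the desired estimate.

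I do not expect a substantial obstacle. The one minor point deserving attention is that Lemma~\ref{lemma:PTcont} is formally stated for virtual element arguments, whereas here I need $L^2$-stability of $\Pitildeo$ on general gradients $\nabla v$ with $v\in H^1(\E)$; this extension is immediate, since the proof of that lemma only relies on integration by parts, trace inequalities, and a Poincar\'e inequality, all of which hold for arbitrary $H^1$ functions.
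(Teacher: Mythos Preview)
Your proposal is correct and follows essentially the same route as the paper: the paper also applies Cauchy--Schwarz element-wise, bounds the $\Pik(a\nabla\uh)$ factor by $\Norm{\uh}_{1,\E}$, and then controls $\Norm{\nabla\vI-\Pitildeo\nabla\vI}_{0,\E}$ via the very same add--subtract argument (this is precisely the content of~\eqref{eqn:pitildea12} applied with~$v$ in place of~$u$ and~$\vI$ in place of~$\upi$), before summing and invoking Lemma~\ref{lemma:v-pikt0v}. Your remark about Lemma~\ref{lemma:PTcont} extending to general $H^1$ gradients is well taken; the paper implicitly relies on the same extension.
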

\begin{proof}
Following by the Cauchy–Schwarz inequality, Lemma~\ref{lemma:PTcont} and \eqref{eqn:pitildea12}, we obtain 
    \[
    \begin{aligned}
  &\int_{\E}\Pik a\nabla \uh \cdot(\nabla \vI - \Pitildeo\nabla \vI)~\dx\lesssim \Norm{\uh}_{1,\E} \calE_{\E}^{k-1}(\vI)\\
  &\lesssim \Norm{\uh}_{1,\E}(h\Norm{ v}_{2,\E}+\Norm{\nabla v-\Pitildeo\nabla v}_{0,\E}).
    \end{aligned}
    \]

The assertion follows from summing over the elements and applying Lemma~\ref{lemma:v-pikt0v}.
  
\end{proof}

\begin{lemma}
 For any~$v\in H^2(\Omega)$,~$\uh \in \VhE+[\Pbb_{k}(\E
)]^2$, and for~$\vI $ as in~\eqref{eqn:vik}, we have

\begin{equation}
 \SemiNorm{\sum_{\E\in\calTh}\int_\E(\nabla \uh- \Pitildeo \nabla \uh)\cdot \Pik(a \nabla \vI)~\dx} \lesssim  h\Norm{\uh}_{1,h}\Norm{v}_{2,\Omega}. \end{equation}    
\end{lemma}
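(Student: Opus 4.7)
The plan is to follow the same splitting strategy as in Lemma~\ref{lemma:pitildea1ui}, but since we are now estimating directly in terms of $\Norm{\uh}_{1,h}$ (rather than the interpolation error $\uh - u$), we do not need the intermediate subtraction of the continuous solution. Set $\bar u_h := \uh - \PipartialE \uh$, so that $\nabla \bar u_h = \nabla \uh$. Adding and subtracting $\Pi_0^{0,\E}(a\nabla v)$, we write
\[
\int_\E(\nabla \uh - \Pitildeo \nabla \uh)\cdot \Pik(a\nabla \vI)\,\dx
= I_1 + I_2,
\]
where $I_1$ couples $\nabla \uh - \Pitildeo \nabla \uh$ with $\Pik(a\nabla \vI) - \Pi_0^{0,\E}(a\nabla v)$, and $I_2$ couples $\nabla \bar u_h - \Pitildeo \nabla \bar u_h$ with the constant vector $\Pi_0^{0,\E}(a\nabla v)$.

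For $I_1$, I would apply the Cauchy--Schwarz inequality together with the $L^2$-stability of $\Pitildeo$ from Lemma~\ref{lemma:PTcont} to obtain
\[
|I_1| \lesssim \Norm{\nabla \uh}_{0,\E}\,\Norm{\Pik(a\nabla \vI) - \Pi_0^{0,\E}(a\nabla v)}_{0,\E}.
\]
Adding and subtracting $a\nabla v$ inside the second factor, using the $L^2$-stability of $\Pik$, the approximation property of $\Pi_0^{0,\E}$, and Lemma~\ref{lemma:DoFs-interpolant-estimates}, the right-hand factor is controlled by $h\Norm{v}_{2,\E}$. Summing over elements yields $|I_1| \lesssim h\,\Norm{\uh}_{1,h}\Norm{v}_{2,\Omega}$.

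For $I_2$, I would exploit the definition \eqref{eqn:L2-tilde-projection-bulk} of $\Pitildeo$ with the constant test function $\qbm_{k-1} = \Pi_0^{0,\E}(a\nabla v)$, whose divergence vanishes, to rewrite
\[
I_2 = \sum_{\e\in\EcalEc}\int_\e (I-\Pitildeze)\bigl(\nbfE\cdot \Pi_0^{0,\E}(a\nabla v)\bigr)\,\bar u_h\,\drm s,
\]
(the contributions on straight edges and on curved edges where $\Pitildeze$ is inserted cancel because the argument is constant and the remaining edge contributions combine as above). Then, exactly as in~\eqref{eqn:I-pitilde0enablapi}, I would use the approximation of $\Pitildeze$, trace inequalities and the approximation/stability of $\Pi_0^{0,\E}$ to get $\Norm{(I-\Pitildeze)(\nbfE\cdot \Pi_0^{0,\E}(a\nabla v))}_{0,\e}\lesssim \hE^{3/2}|v|_{2,\E}+\hE |v|_{1,\e}$, while Poincar\'e--trace as in~\eqref{trace-inverse-vh} gives $\Norm{\bar u_h}_{0,\e}\lesssim \hE^{1/2}\SemiNorm{\uh}_{1,\E}$.

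Combining these two edge estimates via Cauchy--Schwarz over the curved edges and summing over all elements, the boundary contributions involving $\sum_i \Norm{v}_{1,\Gamma_i}$ are finally converted to $\Omega$-norms by Stein's extension and the trace theorem as in~\eqref{tracegammai}, producing $|I_2|\lesssim h\,\Norm{\uh}_{1,h}\Norm{v}_{2,\Omega}$. The main technical step is the treatment of $I_2$: one must carefully exploit that $\Pi_0^{0,\E}(a\nabla v)$ is a \emph{constant} vector (without this, the boundary term would be off by one order), and then correctly trade half-powers of $\hE$ between the two factors on each curved edge.
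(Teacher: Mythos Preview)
Your proof is correct and follows essentially the same route as the paper: the same splitting $I_1+I_2$ via $\Pi_0^{0,\E}(a\nabla v)$, the same Cauchy--Schwarz/stability treatment of $I_1$, and the same reduction of $I_2$ to curved-edge integrals using the definition of $\Pitildeo$ with the constant vector, followed by~\eqref{eqn:I-pitilde0enablapi} and~\eqref{trace-inverse-vh}. Your presentation is in fact slightly more explicit than the paper's (you spell out the Stein-extension/trace step~\eqref{tracegammai} for the $\Norm{v}_{1,\Gamma_i}$ contributions, which the paper leaves implicit).
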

\begin{proof}
Following the proof of Lemma~\ref{lemma:pitildea1ui} and , we set~$\bar{u}_h := \uh-\PipartialE\uh$,
and arrive at 
\[
  \begin{aligned}
&\sum_{\E\in\calTh}\int_\E(\nabla \uh- \Pitildeo \nabla \uh)\cdot \Pik(a \nabla \vI)~\dx\\
& =  \sum_{\E\in\calTh}\int_\E(\nabla \uh- \Pitildeo \nabla \uh)\cdot (\Pik(a \nabla \vI)-\Pi_{0}^{0,\E}(a\nabla v))~\dx\\
&\quad+\sum_{\E\in\calTh}\int_\E(\nabla \bar{u}_h- \Pitildeo \nabla \bar{u}_h)\cdot \Pi_{0}^{0,\E}(a\nabla v)~\dx\\
&: =I_1+I_2.
\end{aligned}  
\]

As for the first term, using Lemma~\ref{lemma:PTcont} and~\ref{lemma:v-pikt0v}, we have 
\[
  \begin{aligned}
    I_1 \lesssim  h\Norm{ \uh}_{1,h}\Norm{v}_{2,\Omega}.
\end{aligned}  
\]

We handle the term~$I_2$ with using the definition of the operator~$\Pitildeo$:
\[
    \begin{aligned}
  I_2 \leq \sum_{\E\in\calT_h}\sum_{\e\in\EcalE_c} 
\Norm{(I- \Pitildeze) (\nbfE\cdot \Pi_0^{0,\E}( a\nabla v)) }_{0,\e}\Norm{(I- \Pitildeze)\bar{u}_h}_{0,\e}.
    \end{aligned}
    \]
    
We apply \eqref{eqn:I-pitilde0enablapi}, and deduce 
    
\[
\begin{aligned}
\|(I-\Pitildeze) (\nbfE\cdot \nabla \Pi_0^{0,\E}( a\nabla v))\|_{0,\e}
 &\lesssim \hE^{\frac32}| v|_{2,\E}+\hE|v|_{1,e},
 \end{aligned}
\]

and use~\eqref{trace-inverse-vh}, it holds
\[
\begin{aligned}
 \|(I-\Pitildeze)\bar{u}_h\|_{0,\e}\
\leq\|\bar{u}_h\|_{0,\e}
        \lesssim  h^{\frac12}|\uh|_{1,\E}.
 \end{aligned}
\]

Suming over the elements, this concludes the bound on the term~$I_2$:
\[I_2\lesssim h\SemiNorm{\uh}_{1,h}\Norm{ v}_{2,\Omega}.\]
        
Combining the above estimates, we obtain the assertion in \eqref{eqn:adiffah1}.
\end{proof}

With similar arguments as Lemma~\ref{lemma:adiffah1} and Theorem~\ref{lemma:Consistency} we have the following results
\begin{lemma}
For any~$v\in H^2(\Omega)$,~$\uh \in \VhE+[\Pbb_{k}(\E
)]^2$, and for~$\vI $ as in~\eqref{eqn:vik}, we have 
\begin{equation}
   \begin{aligned}
    \SemiNorm{\sum_{\E\in\calTh} \int_{\E}\Pik \bbm \uh\cdot(\nabla \vI - \Pitildeo\nabla \vI)~\dx}\lesssim h\Norm{\uh}_{1,h}\Norm{v}_{2,\Omega}.
\end{aligned} 
\end{equation}
 
\end{lemma}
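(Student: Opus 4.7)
The plan is to mirror the strategy of Lemma~\ref{lemma:adiffah1}, since the present statement is the advection analogue of that diffusion-type estimate. The essential structural difference is that $\bbm \uh$ involves no derivative of $\uh$, which is actually more favourable: the first factor in the Cauchy--Schwarz splitting costs only an $L^2$ norm of $\uh$, so bounding it by $\Norm{\uh}_{1,\E}$ is wasteful but perfectly sufficient for the target $\Norm{\uh}_{1,h}\Norm{v}_{2,\Omega}$.

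First, on each element $\E$, I would apply the Cauchy--Schwarz inequality to obtain
\[
\int_{\E}\Pik(\bbm \uh)\cdot(\nabla \vI - \Pitildeo\nabla \vI)\,\dx
\;\leq\; \Norm{\Pik(\bbm\uh)}_{0,\E}\,\Norm{\nabla \vI - \Pitildeo\nabla \vI}_{0,\E}.
\]
For the first factor I would invoke the $L^2$-stability of $\Pik$ and the essential boundedness of $\bbm$ to write $\Norm{\Pik(\bbm\uh)}_{0,\E}\lesssim\Norm{\uh}_{0,\E}\leq\Norm{\uh}_{1,\E}$.

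For the second factor, I would repeat the same device used in~\eqref{eqn:pitildea12}: add and subtract $\nabla v$, apply the triangle inequality, and use the stability of $\Pitildeo$ from Lemma~\ref{lemma:PTcont} on the difference $\nabla v - \nabla \vI$, yielding
\[
\Norm{\nabla \vI - \Pitildeo\nabla \vI}_{0,\E}
\;\lesssim\; \SemiNorm{v-\vI}_{1,\E} + \Norm{\nabla v - \Pitildeo\nabla v}_{0,\E}.
\]
Combining the two factors and summing over $\E\in\calTh$, a discrete Cauchy--Schwarz inequality gives the clean splitting $\Norm{\uh}_{1,h}\bigl(\SemiNorm{v-\vI}_{1,h} + \Norm{\nabla v - \Pitildeoh\nabla v}_{0,\Omega}\bigr)$.

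Finally, I would close the argument by applying the interpolation bound $\SemiNorm{v-\vI}_{1,h}\lesssim h\Norm{v}_{2,\Omega}$ from Lemma~\ref{lemma:DoFs-interpolant-estimates} (with $s=1$) and the projection bound $\Norm{\nabla v - \Pitildeoh\nabla v}_{0,\Omega}\lesssim h\Norm{v}_{2,\Omega}$ from Lemma~\ref{lemma:v-pikt0v} (with $s=1$). Both contribute exactly one power of $h$, yielding the claimed bound. No real obstacle is anticipated since this repeats, almost verbatim, the pattern already worked out for the diffusion term; the only point requiring a touch of care is ensuring the stability of $\Pitildeo$ applied to $\nabla(v-\vI)$, which follows from the same argument used to prove Lemma~\ref{lemma:PTcont}.
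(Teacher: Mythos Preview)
The proposal is correct and follows essentially the same approach as the paper, which merely cites ``similar arguments as Lemma~\ref{lemma:adiffah1}'' without spelling them out. Your Cauchy--Schwarz splitting followed by the add/subtract trick on $\nabla v$ (cf.~\eqref{eqn:pitildea12}) and the appeal to Lemmas~\ref{lemma:DoFs-interpolant-estimates} and~\ref{lemma:v-pikt0v} is exactly the intended argument, and your remark that the stability of $\Pitildeo$ extends from $\VhE$ to general $H^1(\E)$ functions (by inspection of the proof of Lemma~\ref{lemma:PTcont}) correctly handles the only subtle point.
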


\begin{theorem}\label{lemma:BminusBhunique} For any~$v\in H^2(\Omega)$,~$\uh \in \VhE+[\Pbb_{k}(\E
)]^2$, and for~$\vI $ as in~\eqref{eqn:vik}, we have 
\begin{equation}
|B(\uh,\vI)- \Bh(\uh,\vI)|\lesssim h\Norm{\uh}_{1,h}\Norm{v}_{2,\Omega}\qquad.
\end{equation}
\end{theorem}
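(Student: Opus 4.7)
The plan is to mimic the structure of the proof of Theorem~\ref{lemma:Consistency}, using Lemma~\ref{lemma:abcconsistency} to split
\[
B(\uh,\vI) - \Bh(\uh,\vI) = I_a + I_{\bbm} + I_c,
\]
and bounding each piece separately. The key difference compared to Theorem~\ref{lemma:Consistency} is that here we cannot exploit smoothness of~$\uh$ (it is merely in $\VhE + [\Pbb_k(\E)]^2$, not an approximation to a smooth solution), so every factor involving~$\uh$ must be controlled by $\Norm{\uh}_{1,h}$ directly, via the $L^2$-stability of the polynomial projections in Lemma~\ref{lemma:PTcont} and the $L^2$-stability of $\Pik$.

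For the term~$I_a$, Lemma~\ref{lemma:abcconsistency} produces five contributions. Three of them involve products of quantities of the form $\EEk(a\nabla\uh)$, $\Etildek(\nabla\uh)$, $\EEk(a\nabla\vI)$, and $\Etildek(\nabla\vI)$. The $\uh$-factors are bounded by $\Norm{\uh}_{1,\E}$ using the stability of $\Pik$ and Lemma~\ref{lemma:PTcont}, while the $\vI$-factors produce the expected $h\Norm{v}_{2,\E}$ contribution after adding and subtracting~$v$ and invoking Lemma~\ref{lemma:DoFs-interpolant-estimates} together with Lemma~\ref{lemma:v-pikt0v} and~\eqref{eqn:Polynomial error estimates}. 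The stabilization contribution $\SE((I-\Pitildenabla)\uh,(I-\Pitildenabla)\vI)$ is bounded via~\eqref{eqn:stability bounded property} by $\SemiNorm{\uh}_{1,\E}\SemiNorm{(I-\Pitildenabla)\vI}_{1,\E}$, and the second factor gives the desired $h\Norm{v}_{2,\E}$ after adding and subtracting~$v$ and using Lemma~\ref{lemma:v-pikv2}. Finally, the two remaining "cross" integrals are exactly the content of Lemma~\ref{lemma:adiffah1} and of the unnumbered lemma immediately following it.

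The terms $I_{\bbm}$ and $I_c$ are handled analogously. For $I_{\bbm}$, the cross term involving $\Pik(\bbm\uh)$ paired against $\nabla\vI-\Pitildeo\nabla\vI$ is exactly the content of the unnumbered lemma after Lemma~\ref{lemma:adiffah1}, while the remaining pieces are polynomial approximation errors on the smooth coefficient~$\bbm$ multiplied by $L^2$-stable terms in $\uh$ and $\vI$. The term $I_c$ contains no~$\Pitildeo$ projections and is estimated entirely by standard $L^2$-projection approximation errors on~$c$ together with $L^2$-stability bounds on $\uh$ and $\vI$; the factor of~$h$ comes from $\EEk(c\vI)\lesssim h\Norm{v}_{1,\E}$, obtained by inserting and removing~$v$.

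Collecting the three bounds, summing over~$\E\in\calTh$, and applying the Cauchy--Schwarz inequality produces the assertion. The main obstacle is bookkeeping: making sure the half-powers of~$\hE$ arising from the curved-edge estimates of~$\Pitildeze$ (e.g.~\eqref{eqn:I-pitilde0enablapi} and~\eqref{eqn:I-pitilde0ebar}) combine with the scaled trace and Poincar\'e inequalities to produce exactly one power of~$h$ and nothing more; however, this is already taken care of inside the preceding lemmas of this subsection, so the present proof reduces to assembling those building blocks.
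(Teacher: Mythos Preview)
Your proposal is correct and follows essentially the same route as the paper, which likewise reduces the argument to Lemma~\ref{lemma:abcconsistency} together with the cross-term Lemmas of this subsection (Lemma~\ref{lemma:adiffah1} and the two lemmas following it) and the structure of the proof of Theorem~\ref{lemma:Consistency}. One small inaccuracy: the cross term for $I_{\bbm}$ is handled by the \emph{second} of the two unlabeled lemmas after Lemma~\ref{lemma:adiffah1}, not the one immediately following it; otherwise the bookkeeping is as you describe.
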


\section{Stability analysis }\label{section:stability}

Assume that~$\uh$ is an approximation for the problem \eqref{eqn:primal problem} arising from \eqref{eqn:VEM} by using the nonconforming virtual element space~$\Vh$. 
The objective of this section is to establish results for the uniqueness and existence of~$\uh$. 
First of all, we will present the stability estimates for~$\ahE$ as follows.
\begin{lemma}\label{prop:Xnew}
Given an element~$\E$ and the discrete bilinear form~$\ahE(\cdot,\cdot)$ based on the stabilization described in~\eqref{eqn:dofi-dofi}, we have the following stability estimate

\[
\SemiNorm{\vh}^2_{1,\E}
\lesssim \ahE(\vh,\vh)
\lesssim \SemiNorm{\vh}^2_{1,\E} 
\qquad\qquad \forall \vh \in \VhE .
\]
\end{lemma}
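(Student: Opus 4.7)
The plan is to split the bilinear form
\[
\ahE(\vh,\vh) = (a\Pitildeo\nabla\vh,\Pitildeo\nabla\vh)_\E + \SE((I-\Pitildenabla)\vh,(I-\Pitildenabla)\vh)
\]
and bound each piece both above and below by a uniform constant multiple of $\SemiNorm{\vh}_{1,\E}^2$.

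For the upper bound, the consistency term is handled by the $L^\infty$-bound on $a$ combined with the $L^2$-stability of $\Pitildeo$ from Lemma~\ref{lemma:PTcont}. The stabilization term is bounded using \eqref{eqn:stability bounded property} (applicable since $(I-\Pitildenabla)\vh \in (\VhE+\Pbb_k(\E))\cap\ker(\Pitildenabla)$), which yields
$\SE((I-\Pitildenabla)\vh,(I-\Pitildenabla)\vh) \leq \alpha^* \SemiNorm{(I-\Pitildenabla)\vh}_{1,\E}^2$, and the factor $\SemiNorm{(I-\Pitildenabla)\vh}_{1,\E}$ is absorbed into $\SemiNorm{\vh}_{1,\E}$ via the $H^1$-stability of $\Pitildenabla$ in the same lemma.

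For the lower bound I would first use strong ellipticity of $a$ on the consistency piece together with the coercivity \eqref{eqn:stability-bounds} on the stabilization piece to get
\[
\ahE(\vh,\vh) \gtrsim \Norm{\Pitildeo\nabla\vh}_{0,\E}^2 + \SemiNorm{(I-\Pitildenabla)\vh}_{1,\E}^2,
\]
reducing the task to proving
\[
\SemiNorm{\vh}_{1,\E}^2 \lesssim \Norm{\Pitildeo\nabla\vh}_{0,\E}^2 + \SemiNorm{(I-\Pitildenabla)\vh}_{1,\E}^2.
\]
The key step is a clean identity linking the two projections: taking the definition \eqref{eqn:RG-projection-bulk} of $\Pitildenabla$ with test polynomial $q_k = \Pitildenabla\vh$ and comparing it with the definition \eqref{eqn:L2-tilde-projection-bulk} of $\Pitildeo$ with test vector polynomial $\qbm_{k-1} = \nabla\Pitildenabla\vh \in [\Pbb_{k-1}(\E)]^d$, the two right-hand sides coincide verbatim (both reduce to $-(\Delta\Pitildenabla\vh,\vh)_\E$ plus the same curved-edge correction involving $\Pitildeze$). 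This gives
\[
\Norm{\nabla\Pitildenabla\vh}_{0,\E}^2 = (\nabla\Pitildenabla\vh,\Pitildeo\nabla\vh)_\E,
\]
so by Cauchy--Schwarz $\Norm{\nabla\Pitildenabla\vh}_{0,\E} \leq \Norm{\Pitildeo\nabla\vh}_{0,\E}$, and combining with the triangle inequality $\SemiNorm{\vh}_{1,\E} \leq \SemiNorm{\Pitildenabla\vh}_{1,\E} + \SemiNorm{(I-\Pitildenabla)\vh}_{1,\E}$ closes the bound.

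The main obstacle is spotting the identity above. Without it, one would have to estimate the curved-boundary discrepancy $\Norm{(I-\Pitildeo)\nabla p}_{0,\E}$ on polynomial gradients $p \in \Pbb_k(\E)$, which is nonzero and would require a delicate trace/inverse/approximation argument on $\Pitildeze$ in the spirit of the proof of Lemma~\ref{lemma:v-pikt0v}. The synchronization of $\Pitildenabla$ and $\Pitildeo$ through their common edge correction $\Pitildeze$ bypasses this, making the whole proof reduce to this one short identity plus the standard bounds from Lemma~\ref{lemma:PTcont}.
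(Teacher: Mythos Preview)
Your strategy matches the paper's almost exactly, including the central identity
$\Norm{\nabla\Pitildenabla\vh}_{0,\E}^2 = (\nabla\Pitildenabla\vh,\Pitildeo\nabla\vh)_\E$,
which the paper derives in the same way and for the same purpose. The upper bound is handled identically.

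There is, however, one step in your lower bound that is not justified as written. You invoke the coercivity \eqref{eqn:stability-bounds} to obtain
$\SE((I-\Pitildenabla)\vh,(I-\Pitildenabla)\vh)\gtrsim\SemiNorm{(I-\Pitildenabla)\vh}_{1,\E}^2$.
But \eqref{eqn:stability-bounds} is stated only for functions in $\VhE$, while on a curved element $\Pbb_k(\E)\not\subset\VhE$, so $(I-\Pitildenabla)\vh$ lies in $\VhE+\Pbb_k(\E)$ and not, in general, in $\VhE$. The unisolvence argument behind \eqref{eqn:stability-bounds} does not automatically extend to this larger sum. The paper avoids this issue by not passing through $\SemiNorm{(I-\Pitildenabla)\vh}_{1,\E}^2$ at all: after establishing $\Norm{\nabla\Pitildenabla\vh}_{0,\E}\le\Norm{\Pitildeo\nabla\vh}_{0,\E}$, it appeals directly to Proposition~4 of \cite{Beirao-Liu-Mascotto-Russo:2023}, which gives
\[
\SemiNorm{\vh}_{1,\E}^2\lesssim\Norm{\nabla\Pitildenabla\vh}_{0,\E}^2+\SE((I-\Pitildenabla)\vh,(I-\Pitildenabla)\vh)
\]
for $\vh\in\VhE$, keeping the stabilization term intact. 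Your argument is easily repaired by doing the same, but as written the intermediate replacement of $\SE$ by the seminorm is a gap.
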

\begin{proof}

Using~\eqref{eqn:stability bounded property} and the first on in Lemma~\ref{lemma:PTcont}, we have
\begin{equation}\label{eqn:Sbound}
  \begin{aligned}
    \SE((I-\Pitilde_k^{\nabla,\E} )\uh,(I-\Pitilde_k^{\nabla,\E} )\vh)
    &\leq \alpha^*\SemiNorm{(I-\Pitilde_k^{\nabla,\E} )\uh}_{1,\E}\SemiNorm{(I-\Pitilde_k^{\nabla,\E} )\vh}_{1,\E}\\
     &\leq \alpha^*\SemiNorm{\uh}_{1,\E}\SemiNorm{\vh}_{1,\E}.
\end{aligned}  
\end{equation}

Let~$a_{\max} = \Norm{a}_{L^{\infty}(\Omega)}$ be the~$L^{\infty}$ norm of the coefficient~$a$, with Lemma~\ref{lemma:PTcont}, we can deduce
\[\ahE(\uh,\vh)\lesssim(a_{\max}+\alpha^*)\SemiNorm{\uh}_{1,\E}\SemiNorm{\vh}_{1,\E}.\]

Next, we focus on the lower bound.
By the properties of the operators~$\Pitilde_k^{\nabla,\E}$  and~$\Pitildeo $, we can derive
\[
\begin{aligned}
\Norm{\nabla\Pitilde_k^{\nabla,\E}\vh}_{0,\E}^2
 &= -\int_\E\vh~ \nabla\cdot(\nabla\Pitilde_k^{\nabla,\E}\vh)~\dx
 +\sum_{\e\in\EcalE}\int_\e\vh~ \Pitilde_{k-1}^{0,e}
(\nbfE\cdot\nabla \Pitilde_k^{\nabla,\E}\vh)~\ds\\
& =\int_\E\Pitildeo\nabla \vh\cdot\nabla\Pitildenabla\vh~\dx
\leq \Norm{\Pitildeo\nabla \vh}_{0,\E}\Norm{\nabla\Pitilde_k^{\nabla,\E}\vh}_{0,\E},
\end{aligned}
\]
giving immediately
\[\Norm{\nabla\Pitildenabla\vh}_{0,\E}
\leq \Norm{\Pitildeo\nabla \vh}_{0,\E}.\]

Following the proof of Proposition 4 in \cite{Beirao-Liu-Mascotto-Russo:2023}, we have 
\[\SemiNorm{\vh}_{1,\E}^2\lesssim
\Norm{\nabla\Pitildenabla\vh}_{0,\E}^2+ \SE((I-\Pitilde_k^{\nabla,\E} )\uh,(I-\Pitilde_k^{\nabla,\E} )\vh).\]

Combining the inequalities above, we obtain the desired result.
\end{proof}

We also have the following continuity of the global bilinear term~$B_h$.
\begin{lemma}
The bilinear form~$B_h$ is continuous in~$\Vh\times\Vh$, that is,
\begin{equation}\label{eqn:Continuity}
    B_h(u_h,v_h)\lesssim\Norm{u_h}_{1,h}\Norm{v_h}_{1,h}.
\end{equation}
\end{lemma}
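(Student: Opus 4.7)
The plan is to bound each of the three pieces $a_h$, $b_h$, $c_h$ locally on each element~$\E$ and then sum using a discrete Cauchy--Schwarz inequality over $\calTh$. Since the coefficients $a$, $\bbm$, and $c$ are all assumed to be in $L^\infty(\Omega)$, on each $\E$ we can pull out $\|a\|_{L^\infty(\E)}$, $\|\bbm\|_{L^\infty(\E)}$, $\|c\|_{L^\infty(\E)}$ as bounded constants, reducing the problem to bounding pairings of projections of $\uh$ and $\vh$.

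For the bilinear form $\ahE$, the consistency part $\int_\E (a\Pitildeo\nabla\uh)\cdot(\Pitildeo\nabla\vh)\,\drm\xbm$ is handled by Cauchy--Schwarz and the $L^2$-stability of $\Pitildeo$ from Lemma~\ref{lemma:PTcont}, giving a bound by $\SemiNorm{\uh}_{1,\E}\SemiNorm{\vh}_{1,\E}$. The stabilization term $\SE((I-\Pitildenabla)\uh,(I-\Pitildenabla)\vh)$ is controlled using~\eqref{eqn:stability bounded property} (note $(I-\Pitildenabla)\uh \in \ker(\Pitildenabla)$ since $\Pitildenabla$ is a projection) together with the $H^1$-stability of $\Pitildenabla$ from Lemma~\ref{lemma:PTcont} to estimate $\SemiNorm{(I-\Pitildenabla)\uh}_{1,\E} \lesssim \SemiNorm{\uh}_{1,\E}$. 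This is essentially the upper bound half of Lemma~\ref{prop:Xnew}, extended by polarization to off-diagonal arguments.

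For $\bhE$, applying Cauchy--Schwarz yields a bound by $\Norm{\Pi_{k-1}^{0,\E}\uh}_{0,\E}\Norm{\Pitildeo\nabla\vh}_{0,\E}$; the $L^2$-stability of $\Pi_{k-1}^{0,\E}$ and again Lemma~\ref{lemma:PTcont} give $\lesssim \Norm{\uh}_{0,\E}\SemiNorm{\vh}_{1,\E} \le \Norm{\uh}_{1,\E}\Norm{\vh}_{1,\E}$. The cell term $\chE$ is even simpler: Cauchy--Schwarz together with $L^2$-stability of $\Pi_{k-1}^{0,\E}$ yields $\chE(\uh,\vh)\lesssim\Norm{\uh}_{0,\E}\Norm{\vh}_{0,\E}\le\Norm{\uh}_{1,\E}\Norm{\vh}_{1,\E}$.

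Summing the three local bounds over all $\E\in\calTh$ and applying the discrete Cauchy--Schwarz inequality $\sum_\E \Norm{\uh}_{1,\E}\Norm{\vh}_{1,\E}\le \Norm{\uh}_{1,h}\Norm{\vh}_{1,h}$ yields~\eqref{eqn:Continuity}. There is no real obstacle here: all of the technical work (stability of $\Pitildeo$ and $\Pitildenabla$, and continuity of $\SE$) has already been carried out in Lemma~\ref{lemma:PTcont} and inequality~\eqref{eqn:stability bounded property}, so the proof is essentially a bookkeeping exercise assembling these ingredients.
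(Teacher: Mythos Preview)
Your proposal is correct and follows essentially the same route as the paper: the paper invokes Lemma~\ref{prop:Xnew} for the $\ah$-continuity (whose upper-bound proof is exactly your combination of Lemma~\ref{lemma:PTcont} and~\eqref{eqn:stability bounded property}) and the $L^2$-stability of $\Pitildeo$ and $\Pik$ for $\bh$ and $\ch$, arriving at the same elementwise bounds~\eqref{eqn:bccontinuity} that you derive. Your write-up simply unpacks these references in slightly more detail.
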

\begin{proof}
    According to the stability of the operators~$\Pitildeo$ and~$\Pik$ in~$L^2$ norm, 
    the continuity of~$\bh$ and~$\ch$ is obvious, 
    and actually holds on the whole~$H^1_0(\Omega)$ space. 
    Let~$b_{\max} = \Norm{\bbm}_{L^{\infty}(\Omega)}$ and~$c_{\max} = \Norm{c}_{L^{\infty}(\Omega)}$ be the~$L^{\infty}$ norms of the coefficients~$\bbm$ and~$c$, respectively. 
    Therefore,
    we have
\begin{equation}   
\label{eqn:bccontinuity}
\begin{aligned}
| \bh(\uh,\vh) | &\lesssim b_{\max} \Norm{ \uh}_{0,\Omega}\SemiNorm{\vh}_{1,h};\\
| \ch(\uh,\vh) | &\lesssim c_{\max} \Norm{\uh}_{0,\Omega}\Norm{\vh}_{0,\Omega}.
\end{aligned}
\end{equation}
Thus the result follows Lemma~\ref{prop:Xnew} and~\eqref{eqn:bccontinuity}.
\end{proof}
And then we derive the following analogy of Gåding's inequality.
\begin{lemma}
 There exist positive constants~$\beta $ and~$\lambda$ satisfying
\begin{equation}
\label{eqn:Gading's inequality}
\Bh(\vh,\vh)+\beta \Norm{\vh}_{0,\Omega}^2\geq \lambda\|\vh\|_{1,h}^2.
\end{equation}
for all $\vh\in\Vh$.
\end{lemma}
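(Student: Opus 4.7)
The plan is to obtain coercivity of $\ah$ in the broken $H^1$ seminorm and then absorb the indefinite contributions from $\bh$ and $\ch$ into an $L^2$ mass term via Young's inequality, yielding Gårding's inequality in the standard way.

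First I would establish the element-wise lower bound $\ahE(\vh,\vh) \gtrsim |\vh|_{1,\E}^2$ with a constant depending on $a_0$ (not just on $\alpha, \alpha^*$ as in Lemma~\ref{prop:Xnew}). Using the strong ellipticity of the matrix $a$ on the consistency part gives
\[
\int_\E (a\Pitildeo\nabla\vh)\cdot(\Pitildeo\nabla\vh)\,\dx \geq a_0 \Norm{\Pitildeo\nabla\vh}_{0,\E}^2,
\]
while \eqref{eqn:stability-bounds} applied to $(I-\Pitildenabla)\vh \in \VhE$ gives
\[
\SE\bigl((I-\Pitildenabla)\vh,(I-\Pitildenabla)\vh\bigr) \geq \alpha \, \SemiNorm{(I-\Pitildenabla)\vh}_{1,\E}^2.
\]
Reusing the inequality $\Norm{\nabla\Pitildenabla\vh}_{0,\E}\leq \Norm{\Pitildeo\nabla\vh}_{0,\E}$ obtained in the proof of Lemma~\ref{prop:Xnew} and splitting $\vh = \Pitildenabla\vh + (I-\Pitildenabla)\vh$ with the triangle inequality, one gets $\SemiNorm{\vh}_{1,\E}^2 \lesssim \Norm{\Pitildeo\nabla\vh}_{0,\E}^2 + \SemiNorm{(I-\Pitildenabla)\vh}_{1,\E}^2$, so that $\ahE(\vh,\vh) \geq C_0 \SemiNorm{\vh}_{1,\E}^2$ with $C_0 \approx \min(a_0,\alpha)$.

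Next I would estimate the lower-order bilinear forms. Using the $L^2$-stability of $\Pik$ and $\Pitildeo$ (Lemma~\ref{lemma:PTcont}) together with Cauchy--Schwarz,
\[
|\bhE(\vh,\vh)| \leq b_{\max}\,\Norm{\Pik\vh}_{0,\E}\,\Norm{\Pitildeo\nabla\vh}_{0,\E} \leq b_{\max}\,\Norm{\vh}_{0,\E}\,\SemiNorm{\vh}_{1,\E},
\]
and $|\chE(\vh,\vh)| \leq c_{\max}\Norm{\vh}_{0,\E}^2$. Summing over elements and applying Young's inequality with a parameter $\epsilon>0$ yields
\[
|\bh(\vh,\vh)| \leq \tfrac{\epsilon}{2}\SemiNorm{\vh}_{1,h}^2 + \tfrac{b_{\max}^2}{2\epsilon}\Norm{\vh}_{0,\Omega}^2.
\]

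Combining these ingredients gives
\[
\Bh(\vh,\vh) \geq \bigl(C_0-\tfrac{\epsilon}{2}\bigr)\SemiNorm{\vh}_{1,h}^2 - \Bigl(\tfrac{b_{\max}^2}{2\epsilon}+c_{\max}\Bigr)\Norm{\vh}_{0,\Omega}^2.
\]
Choosing $\epsilon = C_0$, setting $\lambda := C_0/2$ and $\beta := \lambda + \tfrac{b_{\max}^2}{2\epsilon}+c_{\max}$ produces
\[
\Bh(\vh,\vh)+\beta\Norm{\vh}_{0,\Omega}^2 \geq \lambda\bigl(\SemiNorm{\vh}_{1,h}^2+\Norm{\vh}_{0,\Omega}^2\bigr) = \lambda\Norm{\vh}_{1,h}^2,
\]
which is the desired bound. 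The only subtle point I anticipate is tightening the first step so that the coercivity constant of $\ah$ genuinely depends on $a_0$ and not just on the stabilization constants; the rest is a textbook Gårding argument, and the full $\Norm{\cdot}_{1,h}$ norm on the right-hand side comes for free by adding the $L^2$ term with the same factor $\lambda$.
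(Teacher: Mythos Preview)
Your overall strategy is the same as the paper's: coercivity of $\ah$ in the broken seminorm (Lemma~\ref{prop:Xnew}), the continuity bounds \eqref{eqn:bccontinuity} on $\bh$ and $\ch$, then Young's inequality to absorb the cross term. Your write-up is in fact more detailed than the paper's three-line argument.

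There is, however, one technical slip in your first step. You apply \eqref{eqn:stability-bounds} to $(I-\Pitildenabla)\vh$ and claim $(I-\Pitildenabla)\vh\in\VhE$. On elements with curved edges this is false: $\Pitildenabla\vh\in\Pbb_k(\E)$, but the paper explicitly notes under ``Polynomial inclusion'' that in general $\Pbb_k(\E)\not\subset\VhE$, so $(I-\Pitildenabla)\vh$ lies only in $\VhE+\Pbb_k(\E)$. The coercivity bound \eqref{eqn:stability-bounds} is stated only for functions in $\VhE$, so you cannot invoke it here. (Related: $\Pitildenabla$ is not a genuine projection on curved elements, so $(I-\Pitildenabla)\vh$ need not be in $\ker\Pitildenabla$ either.)

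The fix is simply to cite Lemma~\ref{prop:Xnew} directly for the lower bound $\SemiNorm{\vh}_{1,\E}^2\lesssim\ahE(\vh,\vh)$; that lemma's proof already handles the subtle point via the inequality $\Norm{\nabla\Pitildenabla\vh}_{0,\E}\le\Norm{\Pitildeo\nabla\vh}_{0,\E}$ together with the estimate $\SemiNorm{\vh}_{1,\E}^2\lesssim\Norm{\nabla\Pitildenabla\vh}_{0,\E}^2+\SE((I-\Pitildenabla)\vh,(I-\Pitildenabla)\vh)$ drawn from the cited reference. Whether the resulting coercivity constant is literally $a_0$ or $\min(a_0,\alpha)$ up to shape-regularity constants is immaterial for G\aa rding's inequality: any fixed positive lower bound suffices, and the remainder of your argument goes through unchanged.
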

\begin{proof} Obviously, the result follows from~\eqref{eqn:bilinear forms} and~\eqref{eqn:bccontinuity} that exists a constant constants~$\beta $ and~$\lambda$ such that 
\[\begin{aligned}
\Bh(\vh,\vh)+\beta \Norm{\vh}_{0,\Omega}^2
&\geq a_0 \SemiNorm{ \vh}_{1,h}^2-b_{\max} \SemiNorm{ \vh}_{1,h}\Norm{\vh}_{0,\Omega}+(\beta -c_{\max})\Norm{\vh}_{0,\Omega}^2\\
&\geq\lambda\|\vh\|_{1,h}^2.
\end{aligned}\]
where~ $\epsilon>0$.
\end{proof}
We are now in a position to establish the uniqueness and existence of the solution for the nonconforming virtual element method \eqref{eqn:VEM}. 
It is sufficient to prove that the solution is unique. 
To this end, let $ e\in \Vho$  be a discrete solution satisfying
\begin{equation}\label{eqn:unique}
\Bh(e,\vh) =0\qquad \qquad\forall \vh\in \Vho.
\end{equation}
The goal is to show that~$e\equiv0$ using a duality approach similar to the method used by Schatz \cite{Schatz:1974} for the standard Galerkin finite element methods.

Next, we introduce a bilinear form to account for the nonconformity of the method in the error analysis
$\calNha: H^{\frac32+\varepsilon}(\Omega) \times H^{1,nc}(\calTh,k)$, defined as
\begin{equation}\label{eqn:nc-term}
\calNha(u,\vh)
:= \sum_{\E \in \calTh}
    \int_{\partial \E} (\nbfE \cdot a\nabla u) \vh~\mathrm{d}s
=  \sum_{\e \in \calEh} \int_\e a\nabla u \cdot \jump{\vh}~\mathrm{d}s.
\end{equation}

In the following result, we can obtain the estimate of the term related to the nonconformity of the scheme \cite{Beirao-Liu-Mascotto-Russo:2023}.
\begin{lemma} \label{lemma:nc-term}
Let $u$ belong to~$H^{s+1}(\Omega)$, $1/2< s \le k$.
Then, for all~$\vh$ in~$\Vh$, we have
\[
|\calNha(u,\vh)|
\lesssim h^{s} \Norm{u}_{s+1,\Omega} \SemiNorm{v_h}_{1,h}.
\]
where~$\calNha(u,\vh)$ is defined in \eqref{eqn:nc-term}.
\end{lemma}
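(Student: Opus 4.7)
The plan is to localize the expression to each edge and then use the nonconformity condition defining $H^{1,nc}(\calTh,k)$ to introduce a computable polynomial projection that can be controlled by the approximation estimates of Lemma~\ref{lemma:proes}. The starting point is the edge-by-edge rewriting already displayed in~\eqref{eqn:nc-term},
\[
\calNha(u,\vh)
= \sum_{\e \in \calEh}
     \int_\e (a\nabla u \cdot \nbfe)\,(\jump{\vh}\cdot\nbfe)\,\drm s,
\]
where $\jump{\vh}\cdot\nbfe$ is interpreted as the scalar jump of $\vh$ across $\e$ (respectively, $\vh$ itself on boundary edges, where we may use the boundary condition in $\Vh^0$).

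The decisive step is that, by the very definition of $H^{1,nc}(\calTh,k)$ and the mapped polynomial space $\Pbbtilde_{k-1}(\e)$, the jump $\jump{\vh}\cdot\nbfe$ is $L^2(\e)$-orthogonal to $\Pbbtilde_{k-1}(\e)$. Consequently, one may subtract the (possibly curved) edge projection $\Pitildeze(a\nabla u\cdot\nbfe)$ at no cost,
\[
\int_\e (a\nabla u\cdot\nbfe)(\jump{\vh}\cdot\nbfe)\,\drm s
= \int_\e \bigl( a\nabla u\cdot\nbfe - \Pitildeze(a\nabla u\cdot\nbfe)\bigr)(\jump{\vh}\cdot\nbfe)\,\drm s.
\]
Then Cauchy--Schwarz plus Lemma~\ref{lemma:proes} (with $n=k-1$, $m=0$ and the fractional exponent $s-\tfrac12$, which is legitimate because $s>\tfrac12$ and $s\le k$) give
\[
\bigl\| a\nabla u\cdot\nbfe - \Pitildeze(a\nabla u\cdot\nbfe)\bigr\|_{0,\e}
\lesssim \he^{s-\frac12}\,\Norm{a\nabla u\cdot\nbfe}_{s-\frac12,\e}.
\]

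For the jump factor, since constants belong to $\Pbbtilde_{k-1}(\e)$, the mean of $\jump{\vh}\cdot\nbfe$ over $\e$ vanishes, so on each adjacent element $\E^{\pm}$ I may replace $\vh$ by $\vh-\PipartialE\vh$ (or any constant) and combine the scaled trace inequality~\eqref{trace-inverse-vh} with the Poincar\'e inequality~\eqref{eqn:poincare} to obtain
\[
\he^{-\frac12}\Norm{\jump{\vh}\cdot\nbfe}_{0,\e}
\lesssim \SemiNorm{\vh}_{1,\E^+} + \SemiNorm{\vh}_{1,\E^-}.
\]
Collecting the two bounds on the single edge yields a factor $\he^{s}\Norm{a\nabla u\cdot\nbfe}_{s-\frac12,\e}(\SemiNorm{\vh}_{1,\E^+}+\SemiNorm{\vh}_{1,\E^-})$.

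To finish, I sum over $\e\in\calEh$ and apply discrete Cauchy--Schwarz to decouple the two factors, producing $h^s$ in front together with $\SemiNorm{\vh}_{1,h}$ and an aggregate $\bigl(\sum_\e\Norm{a\nabla u\cdot\nbfe}^2_{s-\frac12,\e}\bigr)^{1/2}$. The anticipated main obstacle is bounding this last aggregate by $\Norm{u}_{s+1,\Omega}$: on straight edges a standard trace inequality on the adjacent element suffices, but for the curved edges lying on (a portion of) $\Gamma$, the trace has to be taken on a curved piece. Here I plan to reuse exactly the argument from~\eqref{tracegammai}: extend $u$ to $\Rbb^2$ by Stein's operator (Lemma~\ref{lemma:Stein}), regard $\Gamma$ as part of the smooth boundary of an auxiliary domain, invoke the trace theorem on that smooth domain, and then use the stability of the extension to come back to $\Norm{u}_{s+1,\Omega}$. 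Putting everything together delivers $|\calNha(u,\vh)|\lesssim h^s\Norm{u}_{s+1,\Omega}\SemiNorm{\vh}_{1,h}$, which is the claim.
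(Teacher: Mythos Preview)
Your approach is correct and is essentially the standard one: the paper does not give its own proof of this lemma (it is cited from~\cite{Beirao-Liu-Mascotto-Russo:2023}), but its proof of the companion estimate for~$\calNhb$ in Lemma~\ref{lemma:nc-termall} follows exactly the same strategy you outline---subtract the mapped edge projection~$\Pitildeze$ using the orthogonality built into~$H^{1,nc}(\calTh,k)$, apply Lemma~\ref{lemma:proes}, and control the jump factor by a trace/Poincar\'e argument. One minor remark: the lemma as stated takes $\vh\in\Vh$, but the orthogonality on boundary edges requires $\vh\in\Vh^0$ (or at least weak boundary data), which you correctly flag parenthetically; this matches how the lemma is actually invoked in Sections~\ref{section:stability} and~\ref{section:Convergence analysis}.
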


\begin{lemma}\label{lemma:uniqueL2}
Let~$e\in \Vho$ be a discrete solution satisfying \eqref{eqn:unique}. 
Assume that the dual of \eqref{eqn:primal problem} with homogeneous Dirichlet boundary condition has the ~$H^2$ regularity. Then
\[\Norm{e}_{0,\Omega}\lesssim h\SemiNorm{e}_{1,h},\]
provided that the meshsize~$h$ is sufficient small.
\end{lemma}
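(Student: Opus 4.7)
The plan is to use a standard Aubin--Nitsche duality argument, adapted to the nonconforming setting with curved edges. First I would introduce the dual problem: let $\varphi \in H_0^1(\Omega)$ be the unique solution of
\[
B(v,\varphi) = (e,v)_\Omega \qquad \forall v \in H_0^1(\Omega),
\]
so that by the assumed $H^2$-regularity of the dual, $\|\varphi\|_{2,\Omega} \lesssim \|e\|_{0,\Omega}$. The corresponding strong form reads $-\nabla\cdot(a\nabla\varphi) - \bbm\cdot\nabla\varphi + c\varphi = e$ in $\Omega$ with $\varphi = 0$ on $\partial\Omega$.

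Testing with $e$ and integrating by parts element by element (recall $e \in H^1(\calTh)$ is only broken), the boundary contributions collect into the nonconformity form $\calNha$ of~\eqref{eqn:nc-term}, so I obtain the fundamental identity
\[
\|e\|_{0,\Omega}^2 = B(e,\varphi) - \calNha(\varphi,e),
\]
where $B(\cdot,\cdot)$ is understood as the element-wise extension. Now let $\varphi_I \in \Vh^0(\calTh)$ be the DoFs interpolant of $\varphi$ from~\eqref{eqn:vik}; since $\varphi$ vanishes on $\partial\Omega$ the interpolation conditions on boundary edges force $\varphi_I \in \Vh^0(\calTh)$. I split
\[
B(e,\varphi) = B(e,\varphi - \varphi_I) + \bigl[B(e,\varphi_I) - \Bh(e,\varphi_I)\bigr] + \Bh(e,\varphi_I),
\]
and exploit the Galerkin orthogonality $\Bh(e,\varphi_I)=0$ coming from~\eqref{eqn:unique} to drop the last term.

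The three remaining contributions are each controlled by previously established estimates. The term $B(e,\varphi-\varphi_I)$ is bounded by $\|e\|_{1,h}\,\|\varphi-\varphi_I\|_{1,h}$ using the element-wise continuity of $B$, and then by $h\,\|e\|_{1,h}\,\|\varphi\|_{2,\Omega}$ via Lemma~\ref{lemma:DoFs-interpolant-estimates}. The consistency term $B(e,\varphi_I) - \Bh(e,\varphi_I)$ is bounded by $h\,\|e\|_{1,h}\,\|\varphi\|_{2,\Omega}$ directly from Theorem~\ref{lemma:BminusBhunique} applied with $u_h = e$ and $v = \varphi$. Finally, the nonconformity term $\calNha(\varphi,e)$ is controlled by $h\,\|\varphi\|_{2,\Omega}\,\SemiNorm{e}_{1,h}$ by Lemma~\ref{lemma:nc-term} with $s=1$. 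Collecting everything and using the elliptic regularity estimate yields
\[
\|e\|_{0,\Omega}^2 \lesssim h\,\|e\|_{1,h}\,\|e\|_{0,\Omega}.
\]

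To conclude, I use that $\|e\|_{1,h}^2 = \SemiNorm{e}_{1,h}^2 + \|e\|_{0,\Omega}^2$, giving $\|e\|_{0,\Omega} \lesssim h\,\SemiNorm{e}_{1,h} + h\,\|e\|_{0,\Omega}$; choosing $h$ sufficiently small absorbs the last term on the left, producing $\|e\|_{0,\Omega} \lesssim h\,\SemiNorm{e}_{1,h}$ as claimed. The main obstacle is a bookkeeping one: ensuring that the split $B(e,\varphi) = B(e,\varphi-\varphi_I) + B(e,\varphi_I)$ is legitimate given that $e$ is only a broken $H^1$ function, which amounts to reading $B$ element-by-element everywhere and verifying that Theorem~\ref{lemma:BminusBhunique} indeed applies in this form. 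Once this is in place, the proof reduces to assembling the three estimates above.
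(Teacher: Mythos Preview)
Your proposal is correct and follows essentially the same approach as the paper: the same dual problem, the same fundamental identity $\|e\|_{0,\Omega}^2 = B(e,\varphi) - \calNha(\varphi,e)$, the same three-term decomposition after inserting $\varphi_I$, and the same three estimates (continuity plus Lemma~\ref{lemma:DoFs-interpolant-estimates}, Theorem~\ref{lemma:BminusBhunique}, and Lemma~\ref{lemma:nc-term} with $s=1$), followed by absorption of $h\|e\|_{0,\Omega}$ for small~$h$. The only cosmetic difference is that the paper writes $B(e,\varphi_I) = B(e,\varphi_I) - \Bh(e,\varphi_I)$ directly, while you insert $\Bh(e,\varphi_I)$ explicitly and then drop it by~\eqref{eqn:unique}.
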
 
\begin{proof}
Consider the following dual problem. Find~$\phi\in H^1(\Omega)$ such that
\begin{equation}\label{eqn:dual problem}
 \left\{
 \begin{aligned}
 & -\nabla\cdot (a\nabla \phi)-\bbm\cdot\nabla \phi+c \phi = e\quad \text{in}~\Omega,\\
 &\phi=0\quad\text{on}~\partial\Omega.
 \end{aligned}\right.
\end{equation}

The assumption of~$H^2$ regularity implies that~$\phi\in H^2(\Omega)$ 
and there exists a constant~$C$ such that
\begin{equation}\label{eqn:regularity}\Norm{\phi}_{2,\Omega}\lesssim \Norm{e}_{0,\Omega}.\end{equation}

Testing \eqref{eqn:dual problem} against~$e$ and then using an integration by part lead to
\[
\begin{aligned}
\Norm{e}_{0,\Omega}^2&=\int_{\Omega}(-\nabla \cdot a\nabla \phi-\bbm\cdot\nabla \phi+c\phi)e~\dx\\
&=\sum_{\E\in\calTh}\big{(}\int_{\E} a\nabla \phi\cdot\nabla e~\dx-\int_{\E} \bbm\cdot\nabla \phi e~\dx+\int_{\E}c\phi e~\dx\big{)}-\sum_{\E\in\calTh}\int_{{\partial \E}}(a\nabla \phi)\cdot\nbf_{\E} e~\drm s\\
&=B(e,\phi)-\calNha(\phi,e)=B(e,\phi-\phi_I)+B(e,\phi_I)-\calNha(\phi,e).
\end{aligned}
\]

We now estimate the three terms above. 
The estimate for the first one follows from the continuity of~$B(\cdot,\cdot)$
together with the approximation properties~\eqref{eqn:Approximation} of~$\phi_I$ and ~$\phi$
\[
\begin{aligned}
|B(e,\phi-\phi_I)|\lesssim \Norm{e}_{1,h}\Norm{\phi-\phi_I}_{1,h}\lesssim h\Norm{e}_{1,h}\Norm{\phi}_{2,\Omega}.
\end{aligned}\]

To estimate the second term, we can get~$B_h(e,\phi_I)=0$ due to \eqref{eqn:unique}, then use Theorem \ref{lemma:BminusBhunique}, we have 
$$|B(e,\phi_I)| =|B(e,\phi_I) -  B_h(e,\phi_I)|\leq h\Norm{e}_{1,h}\Norm{\phi}_{2,\Omega}.$$

Last term is readily estimated by means of Lemma \ref{lemma:nc-term} with~$ k=s=1$, giving
\[
|\calNha(\phi,e)|\lesssim  h\Norm{\phi}_{2,\Omega}\SemiNorm{e}_{1,h}.
\]

Using the~$H^2$-regularity assumption \eqref{eqn:regularity}, we arrive 
\[\Norm{e}_{0,\Omega}\lesssim h(\SemiNorm{e}_{1,h}+\Norm{e}_{0,\Omega}).\]

Thus, when~$h$ is sufficiently small, one would obtain the desired estimate. 
This completes the proof.
\end{proof}

\begin{theorem}
Assume that the dual of \eqref{eqn:primal problem} with homogeneous Dirichlet boundary condition has~$H^2$-regularity. 
The nonconforming virtual element method defined in \eqref{eqn:VEM} has a unique solution in the finite element space~$\Vh$ if the meshsize~$h$ is sufficiently small.
\end{theorem}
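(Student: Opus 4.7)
The plan is to reduce existence to uniqueness (since \eqref{eqn:VEM} is a square linear system on the finite-dimensional space $\Vh^0(\calTh)$) and then exploit Gårding's inequality together with the $L^2$ duality bound from Lemma~\ref{lemma:uniqueL2} in a standard Aubin--Nitsche--Schatz style argument.

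More concretely, suppose $e \in \Vh^0(\calTh)$ satisfies $\Bh(e,\vh)=0$ for all $\vh\in\Vh^0(\calTh)$, as in \eqref{eqn:unique}. Choosing $\vh = e$, Gårding's inequality \eqref{eqn:Gading's inequality} immediately gives
\[
\lambda \Norm{e}_{1,h}^2 \;\le\; \Bh(e,e) + \beta\Norm{e}_{0,\Omega}^2 \;=\; \beta\Norm{e}_{0,\Omega}^2 .
\]
Next I would invoke Lemma~\ref{lemma:uniqueL2}, whose proof is exactly designed to handle this situation (it uses the dual problem, the $H^2$-regularity assumption, the consistency bound of Theorem~\ref{lemma:BminusBhunique}, and the nonconformity estimate of Lemma~\ref{lemma:nc-term}), to obtain $\Norm{e}_{0,\Omega}\lesssim h\,\SemiNorm{e}_{1,h}$. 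Combining the two inequalities yields
\[
\lambda\Norm{e}_{1,h}^2 \;\le\; \beta\Norm{e}_{0,\Omega}^2 \;\le\; C\,\beta\, h^2\,\SemiNorm{e}_{1,h}^2 \;\le\; C\,\beta\, h^2\,\Norm{e}_{1,h}^2,
\]
so that $(\lambda - C\beta h^2)\,\Norm{e}_{1,h}^2 \le 0$. For $h$ sufficiently small this forces $\Norm{e}_{1,h}=0$, and then again $\Norm{e}_{0,\Omega}\lesssim h\,\SemiNorm{e}_{1,h}=0$ gives $e\equiv 0$.

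Having shown that the homogeneous problem admits only the trivial solution, uniqueness of the solution to \eqref{eqn:VEM} follows by linearity: the difference of two solutions lies in $\Vh^0(\calTh)$ and satisfies \eqref{eqn:unique}. Existence is then automatic because $\Bh(\cdot,\cdot)$ induces a square linear system on $\Vh^0(\calTh)$, and a square system with trivial kernel is invertible. No real obstacle is expected here; all of the delicate work (the dual estimate, the consistency bound for $B-B_h$ tested against a regular interpolant, and the nonconformity estimate) has already been carried out in Lemmas~\ref{lemma:uniqueL2}, \ref{lemma:nc-term} and Theorem~\ref{lemma:BminusBhunique}, so the present theorem is essentially an application of Gårding plus the $L^2$ improvement.
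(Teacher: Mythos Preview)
Your proposal is correct and follows essentially the same approach as the paper: reduce existence to uniqueness via finite-dimensionality, combine G\aa rding's inequality \eqref{eqn:Gading's inequality} with the duality estimate of Lemma~\ref{lemma:uniqueL2} to force $\Norm{e}_{1,h}=0$ for $h$ small enough. The only cosmetic difference is that you directly analyze the homogeneous problem while the paper writes $e=u_h^1-u_h^2$; also note that $\Norm{\cdot}_{1,h}$ is the full broken $H^1$ norm, so $\Norm{e}_{1,h}=0$ already gives $e\equiv 0$ without the extra appeal to $\Norm{e}_{0,\Omega}\lesssim h\SemiNorm{e}_{1,h}$.
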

\begin{proof}
Observe that uniqueness is equivalent to existence for the solution of \eqref{eqn:VEM} since the number of unknowns matches the number of equations. 
To prove a uniqueness, let~$\uh^1$ and~$\uh^2$ be two solutions of \eqref{eqn:VEM}. 
By defining~$e = \uh^1- \uh^2$~we see that \eqref{eqn:unique} is satisfied. 
Now we have from Gårding’s inequality~\eqref{eqn:Gading's inequality} that
\[\Bh(e,e)+\beta \Norm{e}^2_{0,\Omega}\geq \lambda\|e\|_{1,h}^2.\]

Thus, it follows from the estimate of Lemma \ref{lemma:uniqueL2} that
\[\beta \Norm{e}^2_{0,\Omega}\lesssim \beta  h^2\SemiNorm{e}_{1,h}^2.\]

 Now choose~$h$ small enough so that~$\beta  h^2\lesssim\frac{\lambda}{2}$.
 Thus,
\[\Bh(e,e)\geq \frac{\lambda}{2}\|e\|_{1,h}^2.\]
and~$\|e\|_{1,h}=0$. This shows that~$e = 0$ and consequently,~$\uh^1=\uh^2$. 

\end{proof}

\section{Convergence analysis}\label{section:Convergence analysis}
The goal of this section is to derive error estimates for the virtual element method \eqref{eqn:VEM}.
We will follow the standard approach in the error analysis: 
investigating the difference between the virtual element approximation~$\uh$ with the exact solution~$u$ through an error equation;
and using a duality argument to analyze the error in the~$L^2$ norm.

To begin with the derivation of an error equation between the virtual element approximation~$\uh$ 
and the exact solution~$u$,
We test \eqref{eqn:primal problem} against~$\vh\in \Vh$. This leads to
\begin{equation}\label{eqn:nonconforming term}
\begin{aligned}
F(v)&=\sum_{\E\in\calTh}\int_{\E} ( -\nabla\cdot (a\nabla u)+\nabla\cdot(\bbm u)+c u)v~\dx\\
&=\sum_{\E\in\calTh}\int_{\E} ( a\nabla u)\cdot\nabla v~\dx-\sum_{\E\in\calTh}\int_{{\partial \E}}(a\nabla v)\cdot\nbf_{\E} v~\ds\\
&\quad-\sum_{\E\in\calTh}\int_{\E}u(\bbm\cdot\nabla v)~\dx+\sum_{\E\in\calTh}\int_{{\partial \E}} \bbm u\cdot\nbf_{\E} v~\ds\\
&\quad+\sum_{\E\in\calTh}\int_{\E}cuv~\dx\\
& := B(u,v)-\calNha(u,v)+\calNhb(u,v),
\end{aligned}
\end{equation}
where~$\calNha$ is defined as~\eqref{eqn:nc-term} and~$\calNhb: H^{\frac32+\varepsilon}(\Omega) \times H^{1,nc}(\calTh,k)$
 is defined as
\begin{equation}\label{eqn:nc-termb}
\calNhb(u,\vh)
:= \sum_{\E\in\calTh}\int_{{\partial \E}} \bbm u\cdot\nbf_{\E} v \ds
=  \sum_{\e \in \calEh} \int_\e \bbm u \cdot \jump{\vh}~\ds,
\end{equation}
then, we denote
\begin{equation}\label{eqn:nc-termall}
\calNh(u,\vh)
:= \calNha(u,\vh)-\calNhb(u,\vh).
\end{equation}

Next, we begin by recalling the discretization error on the right-hand side and the approximiation property of~$\Pik$.

\begin{lemma}\label{esf}
Let the right-hand side~$f$ of~\eqref{eqn:primal problem}
belong to~$H^{s}(\Omega)$, $1\le s \le k$,
and the discrete right-hand sides~$F$ and $F_h$ be as in~\eqref{eqn:bilinear terms} and~\eqref{eqn:fk1}.
Then, for any~$\vh$ in~$\Vh$, we have
\[
| F(\vh)-F_h(\vh)|\lesssim 
h^{s} \SemiNorm{f}_{s,\Omega}\Norm{\vh}_{1,h}.
\]
\end{lemma}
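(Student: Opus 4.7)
The plan is to exploit the $L^2$-orthogonality built into the definition of $F_h$ and then reduce the estimate to standard polynomial approximation on each element.

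First, I would combine the two equivalent expressions for $F_h$ in~\eqref{eqn:fk1} with the continuous definition $F(v) = \sum_{\E\in\calTh}\int_\E f v\,\dx$ to write, element by element,
\[
F(\vh) - F_h(\vh)
= \sum_{\E\in\calTh} \int_\E f\,\bigl(\vh - \Pi_{k-1}^{0,\E}\vh\bigr)\,\dx.
\]
The key observation is then that since $\Pi_{k-1}^{0,\E}\vh \in \Pbb_{k-1}(\E)$, the $L^2$-orthogonality \eqref{eqn:L2-projection-bulk} of $\Pi_{k-1}^{0,\E}$ applied to $f$ lets me subtract $\Pi_{k-1}^{0,\E} f$ from $f$ without changing the value of the integral, giving
\[
F(\vh) - F_h(\vh)
= \sum_{\E\in\calTh} \int_\E \bigl(f - \Pi_{k-1}^{0,\E} f\bigr)\,\bigl(\vh - \Pi_{k-1}^{0,\E}\vh\bigr)\,\dx.
\]

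Next, I would apply the Cauchy--Schwarz inequality on each element and bound the two factors separately. For the first factor, the Bramble--Hilbert type estimate for the $L^2$-projection onto $\Pbb_{k-1}(\E)$ on a star-shaped element (guaranteed by \textbf{(G1)}) yields
\[
\Norm{f - \Pi_{k-1}^{0,\E} f}_{0,\E}
\lesssim h_\E^{s}\,\SemiNorm{f}_{s,\E}, \qquad 1 \le s \le k.
\]
For the second factor it is enough to use $L^2$-stability of $\Pi_{k-1}^{0,\E}$, which gives
\[
\Norm{\vh - \Pi_{k-1}^{0,\E}\vh}_{0,\E} \lesssim \Norm{\vh}_{0,\E} \le \Norm{\vh}_{1,\E}.
\]
Combining the two bounds, summing over $\E \in \calTh$, and applying the discrete Cauchy--Schwarz inequality produces
\[
|F(\vh) - F_h(\vh)|
\lesssim h^{s}\,\SemiNorm{f}_{s,\Omega}\,\Norm{\vh}_{1,h},
\]
which is the claimed estimate.

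There is essentially no conceptual obstacle here; the only point that deserves a word of care is that the star-shapedness hypothesis \textbf{(G1)} is needed to guarantee that the constants in the Bramble--Hilbert estimate for $f - \Pi_{k-1}^{0,\E}f$ are uniform in the mesh, even on curved elements. Since this is already assumed in Section~\ref{subsection:meshes}, the proof is a short and direct application of $L^2$-orthogonality plus standard polynomial approximation.
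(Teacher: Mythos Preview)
Your proof is correct and follows exactly the standard argument the paper has in mind: the lemma is stated there as a recall without proof, and the same orthogonality identity $(f-\Pi_{k-1}^{0,h}f,\,\vh-\Pi_{k-1}^{0,h}\vh)$ you use is invoked verbatim later in the $L^2$ error analysis (see the treatment of $F(\phi_I)-F_h(\phi_I)$ in Lemma~\ref{lemma:L2estimate}). There is nothing to add.
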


In the following result,
we cope with the estimate for the term related to the nonconformity of the scheme.

\begin{lemma}\label{lemma:nc-termall}
    Let $u$ belong to~$H^{s+1}(\Omega)$, $3/2< s \le k+1$.
Then, for all~$\vh$ in~$\Vh$, we have
\[
|\calNh(u,\vh)|
\lesssim h^{s} \Norm{u}_{s+1,\Omega} \SemiNorm{v_h}_{1,h}.
\]
where~$\calNh(u,\vh)$ is defined in \eqref{eqn:nc-termall}.
\end{lemma}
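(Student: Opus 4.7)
The plan is to split $\calNh(u,\vh) = \calNha(u,\vh) - \calNhb(u,\vh)$, so that the bound for $\calNha$ is already supplied by Lemma~\ref{lemma:nc-term} (within its range of validity), and only the analogous estimate of order $h^s$ for $\calNhb(u,\vh) = \sum_{\e\in\calEh}\int_\e \bbm u\cdot\jump{\vh}\,\ds$ remains to be established.

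For $\calNhb$ I would exploit that $\bbm u$ is single-valued across interior edges to rewrite, on each $\e\in\calEhI$,
\[
\int_\e \bbm u\cdot\jump{\vh}\,\ds = \int_\e (\bbm u\cdot\nbfe)(\jump{\vh}\cdot\nbfe)\,\ds.
\]
By the definition of $H^{1,nc}(\calTh,k)$, the scalar quantity $\jump{\vh}\cdot\nbfe$ is $L^2$-orthogonal to $\Pbbtilde_{k-1}(\e)$, so $\Pitildeze(\bbm u\cdot\nbfe)$ can be subtracted at no cost. Cauchy--Schwarz then yields
\[
\Bigl|\int_\e (\bbm u\cdot\nbfe)(\jump{\vh}\cdot\nbfe)\,\ds\Bigr|
\le \Norm{(I-\Pitildeze)(\bbm u\cdot\nbfe)}_{0,\e}\,\Norm{\jump{\vh}\cdot\nbfe}_{0,\e}.
\]
The first factor is controlled by Lemma~\ref{lemma:proes} (combined, if necessary, with space interpolation to accommodate the fractional orders produced by trace regularity), and the second is controlled by a broken Poincar\'e estimate that again leverages $\Pitildeze(\jump{\vh}\cdot\nbfe)=0$, giving $\Norm{\jump{\vh}\cdot\nbfe}_{0,\e}\lesssim \he^{1/2}(\SemiNorm{\vh}_{1,\E^+}+\SemiNorm{\vh}_{1,\E^-})$. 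For boundary edges, the same argument applies in the regime where this lemma is ultimately invoked (test functions enforcing weakly homogeneous data via the moment conditions in $\Vh^g(\calTh)$). Summing the edge-wise estimates through a discrete Cauchy--Schwarz and converting the boundary norms of $\bbm u$ into the volumetric norm $\Norm{u}_{s+1,\Omega}$ via the trace theorem together with Stein's extension (Lemma~\ref{lemma:Stein}), following the same pattern as in~\eqref{tracegammai}, closes the bound.

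The step I expect to be most delicate is securing the sharp exponent $h^s$ on curved edges. Lemma~\ref{lemma:proes} is stated for integer Sobolev regularity, whereas the trace of $u\in H^{s+1}(\Omega)$ onto $\e$ lives naturally in half-integer spaces; bridging this gap for the mapped polynomial spaces $\Pbbtilde_{k-1}(\e)$ requires careful use of the smoothness Assumption~\ref{assumption:eta} on the parametrization $\gamma_\e$, so that derivatives of $\gamma_\e$ do not spoil the scaling. I would handle this by interpolating Lemma~\ref{lemma:proes} between consecutive integer orders and invoking the uniform control (in $h$) of the Jacobian of $\gamma_\e$, in direct parallel with the proof strategy already used for $\calNha$ in Lemma~\ref{lemma:nc-term}. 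Combining the two edge bounds then produces $|\calNh(u,\vh)|\lesssim h^s\Norm{u}_{s+1,\Omega}\SemiNorm{\vh}_{1,h}$, as claimed.
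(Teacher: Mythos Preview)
Your proposal is correct and follows essentially the same route as the paper: split $\calNh=\calNha-\calNhb$, invoke Lemma~\ref{lemma:nc-term} for the first piece, and for $\calNhb$ use the orthogonality of $\jump{\vh}$ to $\Pbbtilde_{k-1}(\e)$ to insert $\Pitildeze$ on the $\bbm u$ factor, then combine the approximation bound of Lemma~\ref{lemma:proes} with a trace/Poincar\'e control of the jump. The only cosmetic differences are that the paper keeps the vector form $\bbm u-\Pitildeze(\bbm u)$ rather than your scalar $(I-\Pitildeze)(\bbm u\cdot\nbfe)$, and it makes the jump estimate explicit by subtracting a piecewise constant $\Pi_0^{0}\vh$ before applying the trace inequality, whereas you phrase the same step as a ``broken Poincar\'e estimate''; your remark about converting edge norms of $u$ back to $\Norm{u}_{s+1,\Omega}$ via Stein's extension is in fact a bit more careful than the paper's one-line passage from $\Norm{u}_{k,\e}$ to $\Norm{u}_{k+1,\E^+\cup\E^-}$.
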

\begin{proof}
    Following the Lemma \ref{lemma:nc-term}, 
    we have shown the estimate for~$\calNha$. 
   We now turn to discussing the second term of~$\calNh$. 
    The proof follows along the same line as the one for the classical nonconforming methods. 
    We briefly report it here for the sake of completeness.
    From the definition of the space ~$H^{1,nc}(\calTh;k)$ and applying the Cauchy–Schwarz inequality, we obtain
    \[\begin{aligned}
        |\calNhb(u,\vh)|
        &= |\sum_{\e \in \calEh} \int_\e \bbm u \cdot \jump{\vh}~\mathrm{d}s|\\
        & = |\sum_{\e \in \calEh} \int_\e (\bbm u  - \Pitildeze(\bbm u))\cdot \jump{\vh}~\mathrm{d}s| \\
        & = |\sum_{\e \in \calEh} \int_\e (\bbm u  - \Pitildeze(\bbm u))\cdot (\jump{\vh}-[\![ \Pi_0^{0,e} \vh ]\!])~\mathrm{d}s|, \\
    \end{aligned}\]
Using now Lemma~\ref{lemma:proes} we have for each~$\e=\partial\E^+\cap\partial\E^-$, we get
\[
\|\bbm u  - \Pitildeze(\bbm u)\|_{0,\e}
\lesssim  h^k\| u\|_{k,\e}
    \lesssim  h^k\| u\|_{k+1,\E^+\cup\E^-},
\]
and
\[
\begin{aligned}
\|\jump{\vh}-  [\![ \Pi_0^{0,\e} \vh ]\!] \|_{0,\e}
& \lesssim  h^{-\frac12}\| \vh-  \Pi_0^{0,\e} \vh \|_{0,\E^+\cup\E^-}
+ h^{\frac12}| \vh-\Pi_0^{0,\e} \vh |_{1,\E^+\cup\E^-}
\lesssim  h^{\frac12}|\vh|_{1,\E^+\cup\E^-}.
\end{aligned}
\]
An analogous estimate is valid for boundary edges.
Summing over all elements, the assertion follows.
\end{proof}

\subsection{An estimate in a discrete~$H^1$-norm}\label{subsection:An estimate in a discreteH1-norm}

We are now in a position to state the abstract error analysis in the energy norm for method~\eqref{eqn:VEM}
and derive the corresponding optimal error estimate.

 \begin{lemma}\label{lemma:H1estimate} 
Let $u$ and~$\uh$ be the solutions to~\eqref{eqn:primal problem} and~\eqref{eqn:VEM}, respectively.
Then, for every $\uI$ in~$\Vh$, we have

\begin{equation}\label{eqn:Strang}
\begin{aligned}
\Norm{u-\uh}_{1,h}
 &\lesssim(\Norm{u-\uI}_{1,h}+\Norm{u-\upi}_{1,h}+\sup_{\vh\in \Vh}
          \left(\frac{| F(\vh)-F_h(\vh)|}{\Norm{\vh}_{1,h}}\right.\\
         &\quad \left.+\frac{|\calNh(u,\vh)|}{\Norm{\vh}_{1,h}} 
          +\frac{|B(\upi,\vh)-\Bh(\upi,\vh)|}{\Norm{\vh}_{1,h}}+\frac{|B(u-\upi,\vh)|}{\Norm{\vh}_{1,h}}\right)\\
          &\quad + \Norm{u-\uh}_{0,\Omega}).
\end{aligned}
\end{equation}
provided that the meshsize~$h$ is sufficiently small. Furthermore, if~$u \in H^{s+1}(\Omega)$ and~$f \in H^{s-1}(\Omega)$, with $1 \le s \le k$, we also have
\begin{equation} \label{eqn:error-estimates-H1}
\Norm{u-\uh}_{1,h}
\lesssim h^{s}(\Norm{u}_{s+1,\Omega}
+\SemiNorm{f}_{s-1,\Omega})+\Norm{u-\uh}_{0,\Omega}.
\end{equation}
 \end{lemma}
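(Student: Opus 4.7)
The plan is a nonconforming Strang-lemma argument combined with Gårding's inequality, routing every inconsistency through the consistency estimates prepared earlier. First I would decompose $u-u_h = (u-\uI) + \delta_h$ with $\delta_h := \uI-\uh$. Since $\uh\in\Vh^g(\calTh)$ and the canonical DoF interpolant $\uI$ shares the boundary moments of $u$ (by \eqref{eqn:vik} and \eqref{eqn:Vkg}), $\delta_h\in\Vho$, so only $\|\delta_h\|_{1,h}$ must be bounded; a final triangle inequality returns $\|u-\uh\|_{1,h}$.

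Next I would apply Gårding's inequality \eqref{eqn:Gading's inequality} to $\delta_h$ and use that $\uh$ solves \eqref{eqn:VEM} to write
\[
\lambda\|\delta_h\|_{1,h}^2 \le B_h(\uI,\delta_h) - F_h(\delta_h) + \beta\|\delta_h\|_{0,\Omega}^2.
\]
Then I would insert $\pm\upi$ and $\pm B(\upi,\delta_h)$, and invoke the error equation \eqref{eqn:nonconforming term} in the form $B(u,v)=F(v)+\calNh(u,v)$, producing
\[
\begin{aligned}
B_h(\uI,\delta_h)-F_h(\delta_h) &= B_h(\uI-\upi,\delta_h) + \bigl[B_h(\upi,\delta_h)-B(\upi,\delta_h)\bigr]\\
&\quad + B(\upi-u,\delta_h) + \bigl[F(\delta_h)-F_h(\delta_h)\bigr] + \calNh(u,\delta_h).
\end{aligned}
\]
The first term is handled by continuity of $B_h$ (\eqref{eqn:Continuity}) together with $\|\uI-\upi\|_{1,h}\le\|u-\uI\|_{1,h}+\|u-\upi\|_{1,h}$, the third by continuity of $B$, and the remaining three are exactly the quantities under the supremum in \eqref{eqn:Strang}. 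For the Gårding remainder I would split $\|\delta_h\|_{0,\Omega}\le\|u-\uI\|_{0,\Omega}+\|u-\uh\|_{0,\Omega}$, absorb $\|u-\uI\|_{0,\Omega}$ into the higher-order approximation terms via Lemma~\ref{lemma:DoFs-interpolant-estimates}, and leave $\|u-\uh\|_{0,\Omega}$ on the right-hand side, yielding \eqref{eqn:Strang} after dividing by $\|\delta_h\|_{1,h}$ and applying the triangle inequality.

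For the explicit rate \eqref{eqn:error-estimates-H1}, I would substitute: Lemma~\ref{lemma:DoFs-interpolant-estimates} for $\|u-\uI\|_{1,h}\lesssim h^s\|u\|_{s+1,\Omega}$; \eqref{eqn:Polynomial error estimates} for $\|u-\upi\|_{1,h}\lesssim h^s|u|_{s+1,\Omega}$; Theorem~\ref{lemma:Consistency} for the consistency gap $B(\upi,\cdot)-B_h(\upi,\cdot)$; Lemma~\ref{lemma:nc-termall} for $\calNh(u,\cdot)$; and Lemma~\ref{esf}, suitably refined by orthogonality of $\Pi_{k-1}^{0,\E}$ against $\vh-\Pi_0^{0,\E}\vh$ to upgrade the $f$-regularity index by one, for $|F-F_h|\lesssim h^s|f|_{s-1,\Omega}\|\vh\|_{1,h}$.

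The principal obstacle is that Gårding's inequality is not coercive in $L^2$, so the term $\beta\|\delta_h\|_{0,\Omega}^2$ cannot be absorbed at this stage; this forces the $\|u-\uh\|_{0,\Omega}$ contribution to remain on the right-hand side of \eqref{eqn:Strang} and will ultimately be removed by an Aubin–Nitsche-type duality argument in the $L^2$ analysis that follows. The "sufficiently small $h$" hypothesis is inherited from the well-posedness of \eqref{eqn:VEM} established in Section~\ref{section:stability}. Beyond this, the proof is mostly bookkeeping: each piece of the decomposition has already been estimated in Section~\ref{section:Polynomial and virtual element approximation estimates} and in Lemmas~\ref{esf}–\ref{lemma:nc-termall}, and the task is simply to arrange them in the correct order.
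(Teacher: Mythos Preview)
Your proposal is correct and follows essentially the same route as the paper's proof: the same splitting $u-\uh=(u-\uI)+(\uI-\uh)$, the same use of G\aa rding's inequality \eqref{eqn:Gading's inequality} on the discrete part, and the identical five-term decomposition of $\Bh(\uI,\delta_h)-F_h(\delta_h)$ via $\pm\upi$ and the error equation \eqref{eqn:nonconforming term}. Your explicit remarks that $\delta_h\in\Vho$, that the $\beta\|\delta_h\|_{0,\Omega}^2$ term is handled by the triangle inequality $\|\delta_h\|_{0,\Omega}\le\|u-\uI\|_{0,\Omega}+\|u-\uh\|_{0,\Omega}$, and that Lemma~\ref{esf} must be sharpened by one order via orthogonality against $\vh-\Pi_0^{0,\E}\vh$ to match the $\SemiNorm{f}_{s-1,\Omega}$ in \eqref{eqn:error-estimates-H1}, are all accurate refinements that the paper leaves implicit.
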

 
 \begin{proof}
We first prove the Strang-type estimate~\eqref{eqn:Strang}. We split~$u-\uh$ as~$(u-\uI )+(\uI-\uh)$,
use the triangle inequality, and get
\[
\Norm{u-\uh}_{1,h}
\leq \Norm{u-\uI}_{1,h}+\Norm{\uI-\uh}_{1,h}.
\]
Set $e_h := \uh-\uI$. Following the lines of \cite[Theorem~$4.3$]{AyusodeDios-Lipnikov-Manzini:2016} and recalling \eqref{eqn:nonconforming term}, \eqref{eqn:Continuity}
\[
\begin{aligned}
\Bh(e_h,e_h)&= \Bh(\uh,e_h)-\Bh(\uI,e_h)\\
& = F_h(\eh)-F(\eh) - \calNh(u,e_h)
-\Bh(\uI-\upi,e_h)\\
&\quad +(\Bh(\upi,e_h)-B(\upi,e_h))+B(u-\upi,e_h) \\
&\leq  I_h\Norm{e_h}_{1,h}.
\end{aligned}
\]
where 
\[
\begin{aligned}
I_h&= \Norm{u-\uI}_{1,h}+\Norm{u-\upi}_{1,h}+\sup_{\vh\in \Vh}
          \left(\frac{| F(\vh)-F_h(\vh)|}{\Norm{\vh}_{1,h}}\right.\\
         &\quad \left.+\frac{|\calNh(u,\vh)|}{\Norm{\vh}_{1,h}} 
          +\frac{|B(\upi,\vh)-\Bh(\upi,\vh)|}{\Norm{\vh}_{1,h}}+\frac{|B(u-\upi,\vh)|}{\Norm{\vh}_{1,h}}\right).
\end{aligned}
\]

Next, we use Gårding's inequality \eqref{eqn:Gading's inequality} to obtain
\[
\begin{aligned}
\lambda\Norm{e_h}_{1,h}^2
&\leq I_h\Norm{e_h}_{1,h}+\beta \Norm{e_h}^2_{0,\Omega}\\
&\leq\frac{\lambda}{2}\Norm{e_h}_{1,h}^2+\frac{2}{\lambda}I_h^2+\beta \Norm{e_h}^2_{0,\Omega}.
\end{aligned}\]
which implies the desired estimate \eqref{eqn:Strang}. The error estimates~\eqref{eqn:error-estimates-H1} finally follow bounding the terms
on the right-hand side of~\eqref{eqn:Strang} by means of
Lemmas~\ref{lemma:DoFs-interpolant-estimates}, \ref{esf}, \ref{lemma:nc-termall},
and~Theorem \ref{lemma:Consistency}.
\end{proof}
\subsection{An estimate in~$L^2$ norm}
We are now in a position to present the abstract error analysis in the~$L^2$ norm for the method.

\begin{lemma}
\label{lemma:L2estimate}
Assume that the dual of the problem \eqref{eqn:primal problem} has the~$H^{2}$ regularity. 
Let~$u\in H^{s+1}(\Omega)$ be the solution \eqref{eqn:primal problem},~$1\leq s\leq k$,
and~$\uh$ be a virtual element approximation of~$u$ arising from \eqref{eqn:VEM} by using either the discrete space~$\Vh$. 
Then
\[\Norm{u-\uh}_{0,\Omega}\lesssim h^{s+1}(\SemiNorm{f}_{s-1,\Omega}+\Norm{u}_{s+1,\Omega})+h\Norm{u-\uh}_{1,h}.\]
provided that the meshsize~$h$ is sufficiently small.
\end{lemma}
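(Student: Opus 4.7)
The plan is to run a standard Aubin--Nitsche duality argument, mirroring the structure already used in Lemma~\ref{lemma:uniqueL2} but tuned to capture the sharp order $h^{s+1}$. I would introduce the dual problem: find $\phi\in H^1_0(\Omega)$ with
\[
-\nabla\cdot(a\nabla\phi)-\bbm\cdot\nabla\phi+c\phi = u-\uh \quad\text{in }\Omega,\qquad \phi=0 \quad\text{on }\partial\Omega,
\]
and invoke the $H^2$-regularity hypothesis to obtain $\Norm{\phi}_{2,\Omega}\lesssim\Norm{u-\uh}_{0,\Omega}$. Since $\phi$ vanishes on $\partial\Omega$, the DoF interpolant $\phi_I$ constructed as in~\eqref{eqn:vik} lies in $\Vho(\calTh)$, so it is an admissible test function in~\eqref{eqn:VEM}.

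Multiplying the dual equation by $u-\uh$, integrating element-by-element and using the element-wise integration by parts (exactly as in~\eqref{eqn:nonconforming term}, but with the roles of primal/dual swapped), I would get the identity
\[
\Norm{u-\uh}_{0,\Omega}^2 = B(u-\uh,\phi)-\calNh(\phi,u-\uh),
\]
with $\calNh$ as in~\eqref{eqn:nc-termall}. I then split $B(u-\uh,\phi) = B(u-\uh,\phi-\phi_I)+B(u-\uh,\phi_I)$. The first piece is controlled by continuity of $B$ together with Lemma~\ref{lemma:DoFs-interpolant-estimates} (with $s=1$), yielding $|B(u-\uh,\phi-\phi_I)|\lesssim h\Norm{u-\uh}_{1,h}\Norm{\phi}_{2,\Omega}$; the nonconforming residual is handled by Lemma~\ref{lemma:nc-termall} with $s=1$, giving $|\calNh(\phi,u-\uh)|\lesssim h\Norm{\phi}_{2,\Omega}\SemiNorm{u-\uh}_{1,h}$.

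The core of the argument is the term $B(u-\uh,\phi_I)$. Using~\eqref{eqn:nonconforming term} for the exact solution and the discrete equation $\Bh(\uh,\phi_I)=F_h(\phi_I)$, I would rewrite
\[
B(u-\uh,\phi_I) = \bigl(F(\phi_I)-F_h(\phi_I)\bigr)+\calNh(u,\phi_I)-\bigl(B(\uh,\phi_I)-\Bh(\uh,\phi_I)\bigr),
\]
and bound the three pieces separately. The third one is exactly what Theorem~\ref{lemma:ConsistencyL2} was designed for and supplies $\bigl(h\Norm{u-\uh}_{1,h}+h^{s+1}\Norm{u}_{s+1,\Omega}\bigr)\Norm{\phi}_{2,\Omega}$ directly. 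For the first one I would use the $L^2$-orthogonality of $\Pi_{k-1}^{0,\E}$ twice to write
\[
F(\phi_I)-F_h(\phi_I)=\sum_{\E\in\calTh}\int_\E\bigl(f-\Pik f\bigr)\bigl(\phi_I-\Pik\phi_I\bigr)\,\dx,
\]
and then exploit $\phi\in H^2(\Omega)$ (so that $\Norm{\phi_I-\Pik\phi_I}_{0,\E}\lesssim h^2\Norm{\phi}_{2,\E}$ via $\phi-\phi_I$ estimates) to upgrade Lemma~\ref{esf} to the sharp bound $h^{s+1}\SemiNorm{f}_{s-1,\Omega}\Norm{\phi}_{2,\Omega}$. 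For $\calNh(u,\phi_I)$, since $\phi\in H^2$ is continuous, $\jump{\phi_I}=\jump{\phi_I-\phi}$, and together with the orthogonality $\int_\e\jump{\phi_I}\cdot\nbfe \widetilde{q}_{k-1}=0$ from the definition of $H^{1,nc}(\calTh,k)$, inserting the projection $\Pitildeze$ on $a\nabla u$ (resp.\ $\bbm u$) and using the trace/interpolation bounds again yields $h^{s+1}\Norm{u}_{s+1,\Omega}\Norm{\phi}_{2,\Omega}$.

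Collecting everything, using $\Norm{\phi}_{2,\Omega}\lesssim\Norm{u-\uh}_{0,\Omega}$ and dividing by $\Norm{u-\uh}_{0,\Omega}$ yields the stated bound. The main obstacle is precisely the gain of the extra power of $h$ in the terms $F(\phi_I)-F_h(\phi_I)$ and $\calNh(u,\phi_I)$: Lemma~\ref{esf} and Lemma~\ref{lemma:nc-termall} as stated only yield order $h^s$, so one must carefully redo these estimates by exploiting simultaneously (i) the $H^2$ smoothness of the dual solution, (ii) the approximation properties of $\phi_I$, and (iii) the $L^2$-orthogonality of $\Pik$ and the nonconformity constraint. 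The rest is essentially bookkeeping on top of Theorem~\ref{lemma:ConsistencyL2} and the broken integration by parts identity.
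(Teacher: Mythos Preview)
Your proposal follows the same duality strategy as the paper and arrives at the same decomposition of $B(u-\uh,\phi_I)$ into the load term, the nonconformity term $\calNh(u,\phi_I)$, and the consistency defect handled by Theorem~\ref{lemma:ConsistencyL2}. Two small corrections are in order. First, the integration-by-parts identity for the dual equation only produces $\calNha(\phi,u-\uh)$, not $\calNh(\phi,u-\uh)$: since the dual operator contains $-\bbm\cdot\nabla\phi$ (no divergence form), no boundary term in $\bbm$ appears; accordingly the paper invokes Lemma~\ref{lemma:nc-term} (which allows $s=1$ because its hypothesis is $1/2<s$), not Lemma~\ref{lemma:nc-termall} whose hypothesis $3/2<s$ would fail at $s=1$. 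Second, for $\calNh(u,\phi_I)$ the paper does not redo the projection argument: it simply observes $\calNh(u,\phi_I)=\calNh(u,\phi_I-\phi)$ (since $\phi\in H_0^1(\Omega)$ has zero jumps) and applies Lemma~\ref{lemma:nc-termall} directly to gain the extra factor $\SemiNorm{\phi_I-\phi}_{1,h}\lesssim h\Norm{\phi}_{2,\Omega}$. With these two tweaks your argument matches the paper's proof.
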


\begin{proof}
Consider the dual problem of \eqref{eqn:primal problem} which seeks $w\in H_0^1(\Omega)$.
\begin{equation}\label{eqn:dual problemL2}
 -\nabla\cdot (a\nabla \phi)-\bbm\cdot\nabla \phi+c \phi = u-u_h\quad \text{in}~\Omega.
\end{equation}

The assumed $H^{2}$ regularity for the dual problem implies

\begin{equation}
    \Norm{\phi-\phi_I}_{1,h}\lesssim h\Norm{\phi}_{2,\Omega}\lesssim h\Norm{u-u_h}_{0,\Omega}.
\end{equation}

Similarly argument with Lemma \ref{lemma:uniqueL2} and testing \eqref{eqn:dual problemL2} against~$e_h$ gives
\[
\begin{aligned}
\Norm{u-u_h}_{0,\Omega}^2
&=B(u-u_h,\phi-\phi_I)+B(u-u_h,\phi_I)-\calNha(\phi,u-u_h).
\end{aligned}
\]

We now estimate the three terms above.
The estimate for the first one follows from the continuity of~$B(\cdot,\cdot)$ together with the approximation property~\eqref{eqn:Approximation} of ~$\phi_I$ and the elliptic regularity theory:
\[
\begin{aligned}
|B(u-u_h,\phi-\phi_I)|\lesssim \Norm{u-u_h}_{1,h}\Norm{\phi-\phi_I}_{1,h}\lesssim h\Norm{u-u_h}_{1,h}\Norm{u-u_h}_{0,\Omega}.
\end{aligned}\]

Last term is readily estimated by means of Lemma~\ref{lemma:nc-term} with~$k=s=1$ (since obviously~$u-\uh \in H^{1,nc}(\calTh;1)$),
giving
\[
|\calNha(\phi,u-u_h)|\lesssim h\Norm{\phi}_{2,\Omega}\SemiNorm{u-u_h}_{1,h} \lesssim  h\Norm{u-u_h}_{0,\Omega}\SemiNorm{u-u_h}_{1,h}.
\]

To estimate the second term, 
\[
\begin{aligned}
  B(u-u_h,\phi_I) & =  B(u,\phi_I) -B(\uh,\phi_I)\\
  & = F(\phi_I) +\calNh(u,\phi_I)-F_h(\phi_I) +\Bh(\uh,\phi_I)-B(\uh,\phi_I)\\
  & = F(\phi_I)-F_h(\phi_I) +\calNh(u,\phi_I)+\Bh(\uh,\phi_I)-B(\uh,\phi_I)
\end{aligned}
\]

Next, thanks to the orthogonality and~(5.47) in~\cite{Beirao-Brezzi-Marini-Russo:2016}, we obtain
\[\begin{aligned}
  F(\phi_I)-F_h(\phi_I) = (f-\Pik f,\phi_I)= (f-\Pikh f,\phi_I-\Pikh \phi_I)&\lesssim h^{s+1}\SemiNorm{f}_{s,\Omega} \Norm{u-u_h}_{0,\Omega},\end{aligned}\]
  
Then a standard application of Lemma~\ref{lemma:nc-termall} together with the approximation property~\eqref{eqn:Approximation} of ~$\phi_I$ and the elliptic regularity theory:
  \[
  \calNh(u,\phi_I)= \calNh(u,\phi_I-\phi)\lesssim h^{s}\Norm{u}_{s+1,\Omega} \SemiNorm{\phi_I-\phi}_{1,h}\lesssim h^{s+1}\Norm{u}_{s+1,\Omega} \Norm{u-u_h}_{0,\Omega}.
\]

From Theorem \ref{lemma:ConsistencyL2},
we can get 
\[\Bh(\uh,\phi_I)-B(\uh,\phi_I) \lesssim ( h\SemiNorm{u-\uh}_{1,h}+h^{s+1}\Norm{u}_{s+1,\Omega})\Norm{u-u_h}_{0,\Omega}.\]

This completes the proof.
\end{proof}

\subsection{Error estimates in~$H^1$ and~$L^2$ norms}

With the results established in Lemmas~\ref{lemma:H1estimate} and~\ref{lemma:L2estimate}, we are ready to derive an error estimate for the virtual element method
approximation~$\uh$. To this end, we may substitute the result of Lemma~\ref{lemma:L2estimate} into the estimate shown in Lemma~\ref{lemma:H1estimate}.

\begin{theorem}\label{theorem:H1}
    In addition to the assumption of Lemma~\ref{lemma:L2estimate}, assume that the exact solution~$u$ is sufficiently smooth such that
~$u\in H^{s+1}(\Omega)$ with ~$1\leq s\leq k$. Then,
\begin{equation}\label{eqn:theH1estimate}
   \Norm{u-\uh}_{1,h} \lesssim  h^{s}(\Norm{u}_{s+1,\Omega}
+\SemiNorm{f}_{s-1,\Omega}),
\end{equation}
provided that the meshsize~$h$ is sufficiently small.
\end{theorem}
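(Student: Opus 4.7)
The plan is to combine the two preparatory estimates from Lemma~\ref{lemma:H1estimate} and Lemma~\ref{lemma:L2estimate} by means of a standard absorption argument. The two lemmas form a coupled system: the $H^1$ estimate contains a lower-order $L^2$ term on the right-hand side, and the $L^2$ estimate contains an $h$-multiplied $H^1$ term on the right-hand side, so substituting one into the other should decouple them provided the scaling works out, which it does precisely because the $L^2$ contribution carries an extra power of $h$.

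First, I would invoke Lemma~\ref{lemma:H1estimate} under the stated smoothness assumptions to obtain
\[
\Norm{u-\uh}_{1,h}
\lesssim h^{s}\bigl(\Norm{u}_{s+1,\Omega}+\SemiNorm{f}_{s-1,\Omega}\bigr)
+\Norm{u-\uh}_{0,\Omega}.
\]
Next, I would insert the bound from Lemma~\ref{lemma:L2estimate},
\[
\Norm{u-\uh}_{0,\Omega}
\lesssim h^{s+1}\bigl(\SemiNorm{f}_{s-1,\Omega}+\Norm{u}_{s+1,\Omega}\bigr)
+ h\,\Norm{u-\uh}_{1,h},
\]
into the previous inequality. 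Collecting like terms yields
\[
\Norm{u-\uh}_{1,h}
\lesssim h^{s}\bigl(\Norm{u}_{s+1,\Omega}+\SemiNorm{f}_{s-1,\Omega}\bigr)
+ C\,h\,\Norm{u-\uh}_{1,h},
\]
for some constant $C$ independent of $h$.

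Finally, choosing the meshsize $h$ small enough that $Ch\le 1/2$, the last term can be absorbed into the left-hand side, producing the claimed estimate~\eqref{eqn:theH1estimate}. The main obstacle here is essentially bookkeeping rather than analysis: one must verify that the smallness hypothesis on $h$ required for the absorption is compatible with (i.e., no stricter than) those already required by Lemmas~\ref{lemma:H1estimate} and~\ref{lemma:L2estimate}, which holds because those lemmas only demand $h$ small independently of the solution $u$. All the genuinely technical estimates --- the consistency bound of Theorem~\ref{lemma:Consistency}, the nonconformity bound of Lemma~\ref{lemma:nc-termall}, the right-hand side estimate of Lemma~\ref{esf}, the interpolation estimate of Lemma~\ref{lemma:DoFs-interpolant-estimates}, and the duality argument for the $L^2$ error --- have already been absorbed into the two preparatory lemmas, so this final theorem reduces to the elementary substitution sketched above.
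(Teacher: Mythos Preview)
Your proposal is correct and follows essentially the same approach as the paper: the paper's proof consists of exactly the substitution you describe, inserting the $L^2$ bound of Lemma~\ref{lemma:L2estimate} into the $H^1$ bound of Lemma~\ref{lemma:H1estimate} and absorbing the resulting $Ch\,\Norm{u-\uh}_{1,h}$ term for $h$ sufficiently small.
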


Now substituting the error estimate~\eqref{eqn:theH1estimate} into the estimate of Lemma~\ref{lemma:L2estimate}. The result can then be summarized as follows.
\begin{theorem}
    Under the assumption of Theorem~\ref{theorem:H1}, we have
\begin{equation}\label{eqn:theL2estimate}
   \Norm{u-\uh}_{0,\Omega} \lesssim  h^{s+1}(\Norm{u}_{s+1,\Omega}
+\SemiNorm{f}_{s-1,\Omega}),
\end{equation}
provided that the meshsize~$h$ is sufficiently small.
\end{theorem}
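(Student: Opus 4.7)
The plan is to obtain the $L^2$ error estimate as a direct consequence of two results already in hand: the abstract $L^2$ bound of Lemma~\ref{lemma:L2estimate} and the energy-norm estimate of Theorem~\ref{theorem:H1}. No new machinery will be required since Lemma~\ref{lemma:L2estimate} was stated precisely in the form designed for this substitution step, namely as a bound of $\Norm{u-\uh}_{0,\Omega}$ in terms of optimal data-approximation terms plus the suboptimal contribution $h\Norm{u-\uh}_{1,h}$.

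First I would invoke Lemma~\ref{lemma:L2estimate}, which under the $H^2$-regularity assumption on the dual problem and for $u\in H^{s+1}(\Omega)$ with $1\le s\le k$ gives
\[
\Norm{u-\uh}_{0,\Omega}\lesssim h^{s+1}\bigl(\SemiNorm{f}_{s-1,\Omega}+\Norm{u}_{s+1,\Omega}\bigr)+h\Norm{u-\uh}_{1,h},
\]
valid for $h$ sufficiently small. Next, I would apply Theorem~\ref{theorem:H1} to the second term on the right-hand side, which under the same smoothness hypotheses provides
\[
\Norm{u-\uh}_{1,h}\lesssim h^{s}\bigl(\Norm{u}_{s+1,\Omega}+\SemiNorm{f}_{s-1,\Omega}\bigr).
\]
Multiplying by the extra factor of $h$ already present and combining the two inequalities yields the asserted estimate
\[
\Norm{u-\uh}_{0,\Omega}\lesssim h^{s+1}\bigl(\Norm{u}_{s+1,\Omega}+\SemiNorm{f}_{s-1,\Omega}\bigr).
\]

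There is no real obstacle to overcome here: the genuine work is done upstream in Lemma~\ref{lemma:L2estimate} (the Aubin--Nitsche duality argument, which required both Theorem~\ref{lemma:ConsistencyL2} and the $L^2$-estimate on the nonconforming term) and in Theorem~\ref{theorem:H1} (the Strang-type estimate). The only point worth checking is the compatibility of the smallness-of-$h$ assumptions, but both results are stated for $h$ sufficiently small and neither threshold depends on the other, so one simply takes the minimum of the two and the conclusion follows.
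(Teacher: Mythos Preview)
Your proposal is correct and matches the paper's approach exactly: the paper also obtains~\eqref{eqn:theL2estimate} simply by substituting the $H^1$ estimate of Theorem~\ref{theorem:H1} into the bound of Lemma~\ref{lemma:L2estimate}.
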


\section{The 3D version of the method}\label{section:3D}

Recalling the Section~$7$ in~\cite{Beirao-Liu-Mascotto-Russo:2023}, we have discussed the nonconforming virtual element method for the Poisson equation on~3D curved domain. 
In this section, we mainly presented the differences of the approach on~3D curved domain: the local nonconforming virtual element space and the definition of the nonconformity term. Firstly, the local nonconforming virtual element space on the (possibly curved) polyhedron $\E$ reads
\[
\begin{aligned}
 \VhE:=\{\vh \in H^1(\E):~&\Delta \vh \in \Pbb_{k}(\E),~
\nbfE \cdot \nabla \vh \in \Pbbtilde_{k-1}(F)\ 
~\forall F \in \EcalE, \\
&(\Pitilde_{k}^{\nabla,\E} v-v,q_k)_{\E}=0~\forall q_k\in (\Pbb_k(\E)/\Pbb_{k-2}(\E))\}, 
 \end{aligned}
\]
where $\Pbbtilde_{k-1}(F)$ is the push forward on $F$ of $\Pbb_{k-1}(\widehat{F})$. 

Next, the definition of the nonconformity term
which in 3D is defined as
\[
\calNh(u,v) 
:= \sum_{F\in\calEh^3} \int_F \llbracket v\rrbracket_F\cdot(\nabla u -\bbm u) \ {\mathrm{d}} F ,
\]
where~$\calEh^3$ denotes the set of (curved) faces in the polyhedral decomposition. 
Thus, the abstract error analysis is dealt with similarly to the 2D case.

\newpage

\section{Numerical experiments}\label{section:Numerical experiments}
In this section, we report numerical results for the nonconforming virtual element method on a variety of testing problems, with different mesh and domains.
To this end, let~$\uh$ and~$u$ be the solutions to the virtual element method equation ~\eqref{eqn:VEM} and the original equation~\eqref{eqn:primal problem}, respectively. 
Since the energy and~$L^2$ errors are not computable, we rather consider the computable error
quantities:
\begin{equation} \label{eqn:computable-quantities}
\begin{aligned}
E_{H^1}
:= \frac{\left(\sum_{\E\in\calTh}
\SemiNorm{  u-  \Pitilde_{k}^{0,\E}\uh}_{1,\E}^2\right)^{\frac12}}{\Norm{u}_{1,\Omega}},
\qquad 
E_{L^2}
:= \frac{\left(\sum_{\E\in\calTh}
\Norm{u- \Pitilde_{k}^{0,\E} \uh}_{0,\E}^2\right)^{\frac12}}{\Norm{u}_{0,\Omega}}.\\
\end{aligned}
\end{equation}
\subsection{Case 1: model problem on the unit square}
First, we consider the  variable coefficients of the equation are given by
\begin{equation}\label{coefficients}
    a_1(x,y)= \left( \begin{aligned}
    y^2+1&&-xy\\
    -xy&&x^2+1
\end{aligned}\right),~\bbm_1(x,y) = (x,y),~c_1(x,y) = x^2+y^3,
\end{equation}
and Dirichlet boundary conditions defined in such a way
that the exact solution is
\[u_1(x,y) = x^2y+\sin(2\pi x)\sin(2\pi y)+2.\]
\begin{figure}[H]
\label{fig:squaremesh}
    \centering
\includegraphics[width=2.5in]{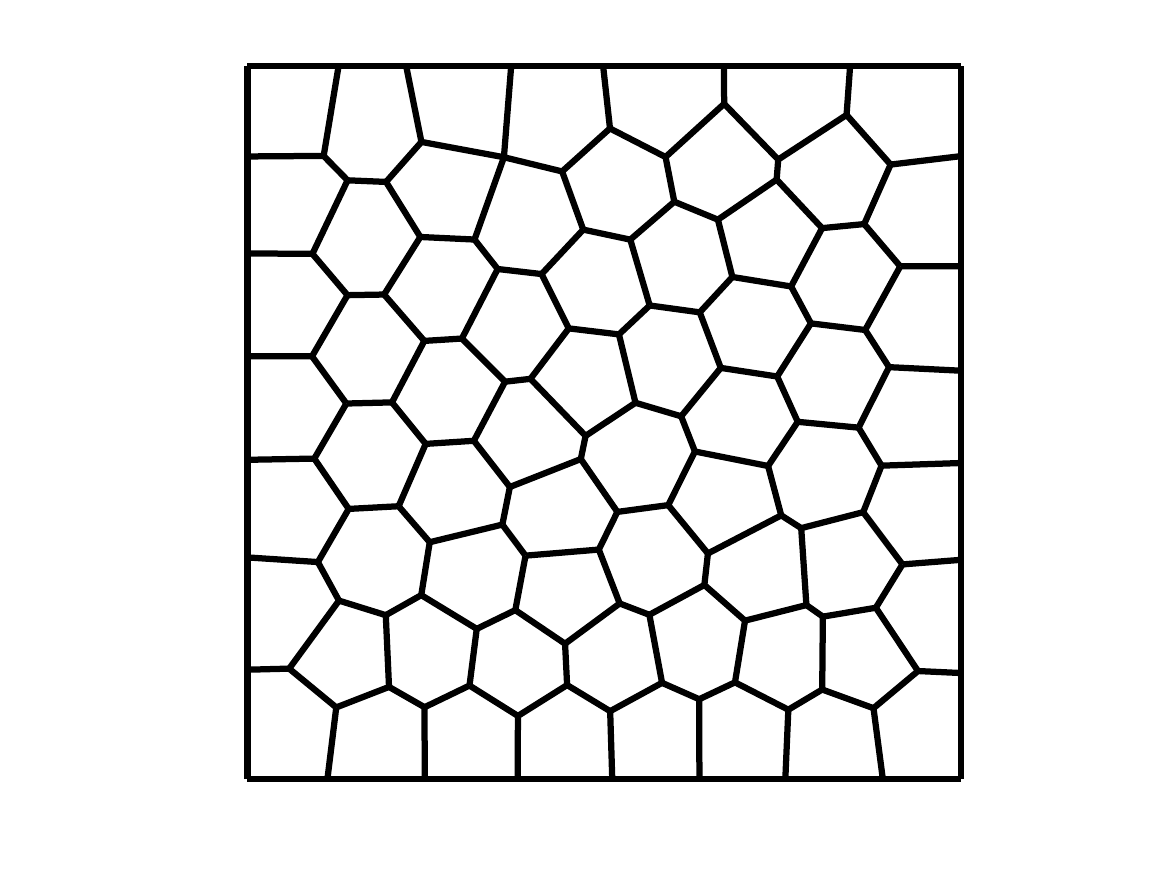}\includegraphics[width=2.5in]{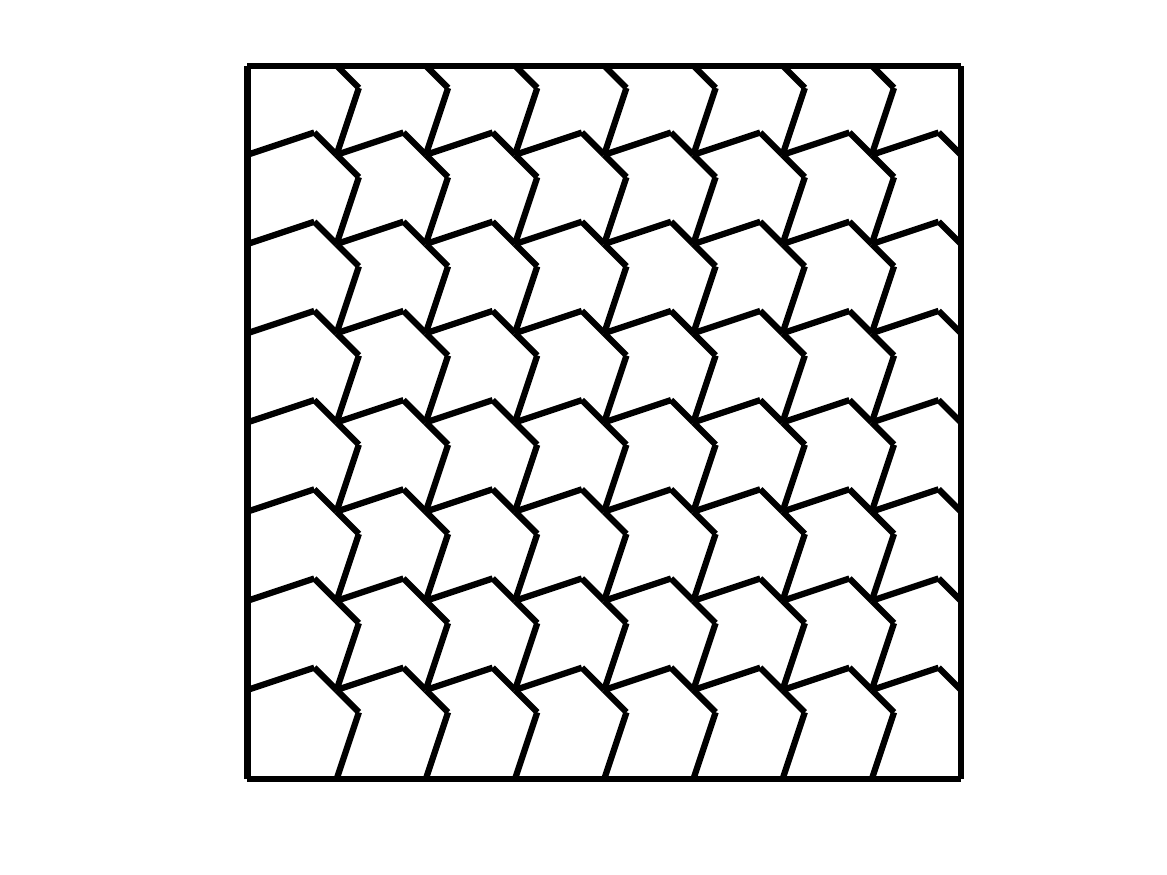}
\caption{Left-panel: an example of Voronoi mesh over~$\Omega$. Right-panel: an example of concave mesh over~$\Omega$.}
\end{figure}

In Figure~\ref{figure:convergence-test-case-1} ,
we show the convergence of the two error quantities in~\eqref{eqn:computable-quantities} on the given sequence of Voronoi meshes and concave meshes with decreasing mesh size;
see Figure~\ref{fig:squaremesh} for sample meshes.
We consider virtual elements of ``orders'' $k=1$,~$2$,~$3$, and~$4$.

\begin{figure}[H]

\centering
\includegraphics[width=2.5in]{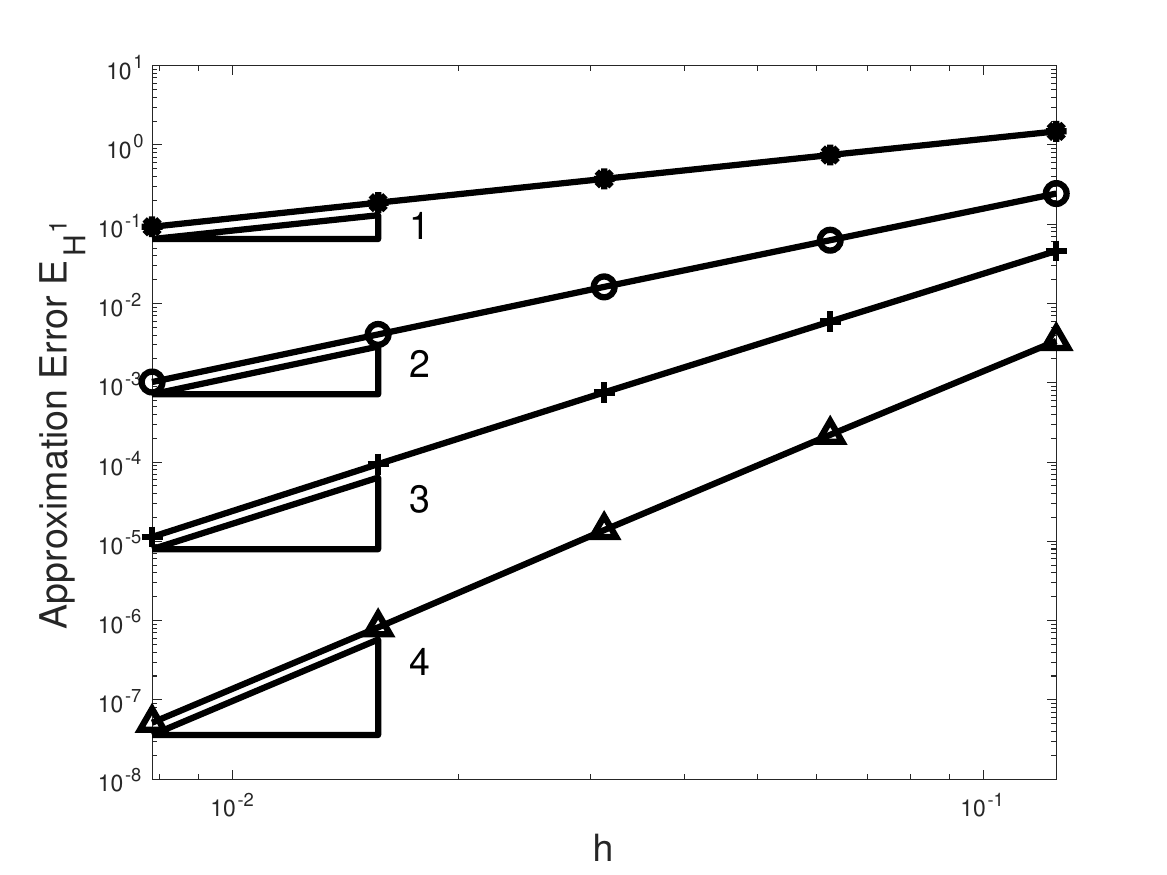}\includegraphics[width=2.5in]{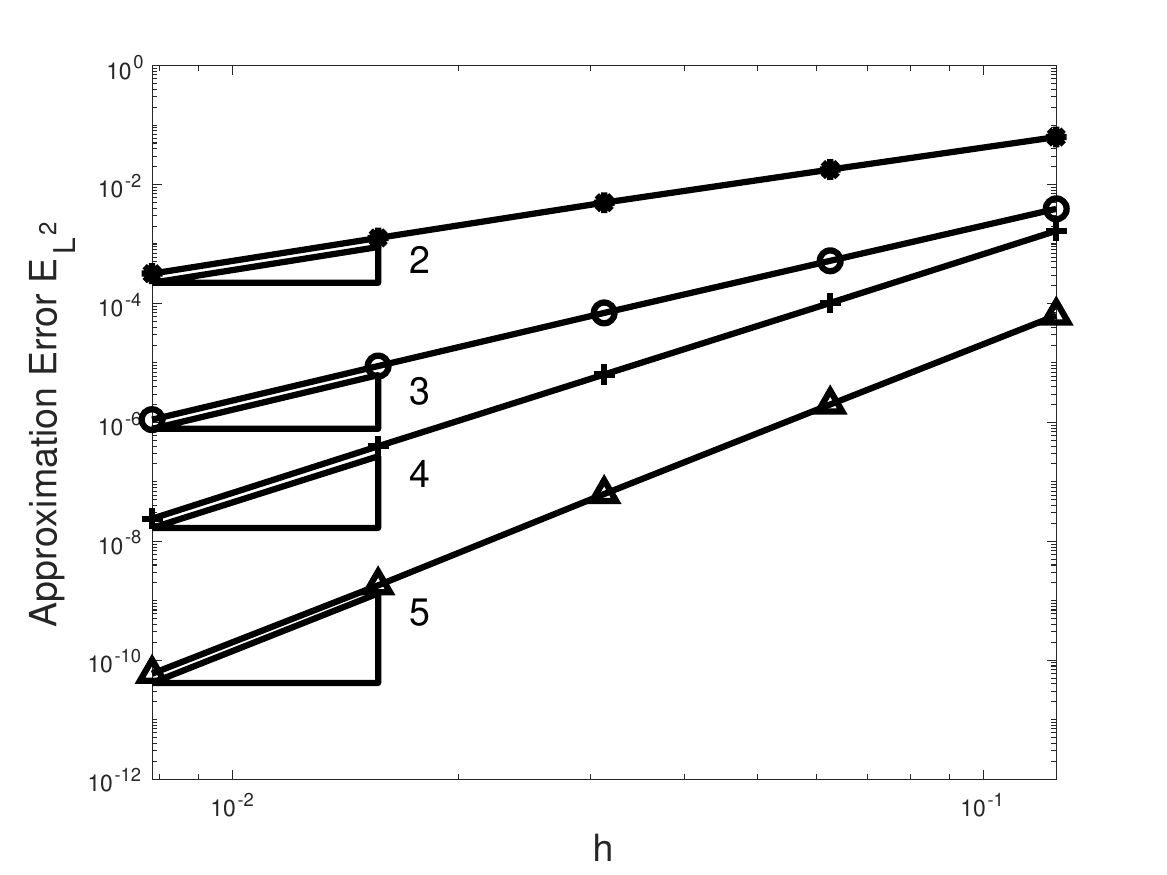}

\includegraphics[width=2.5in]{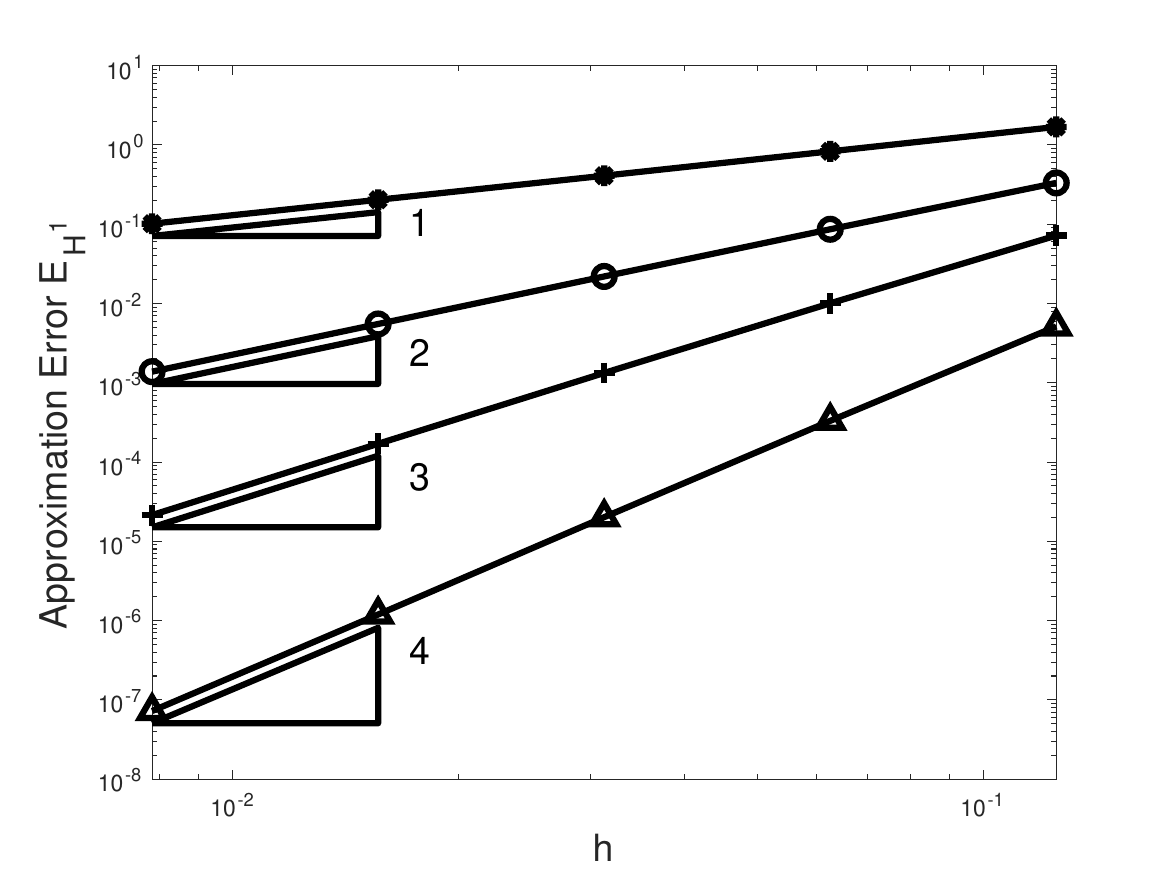}\includegraphics[width=2.5in]{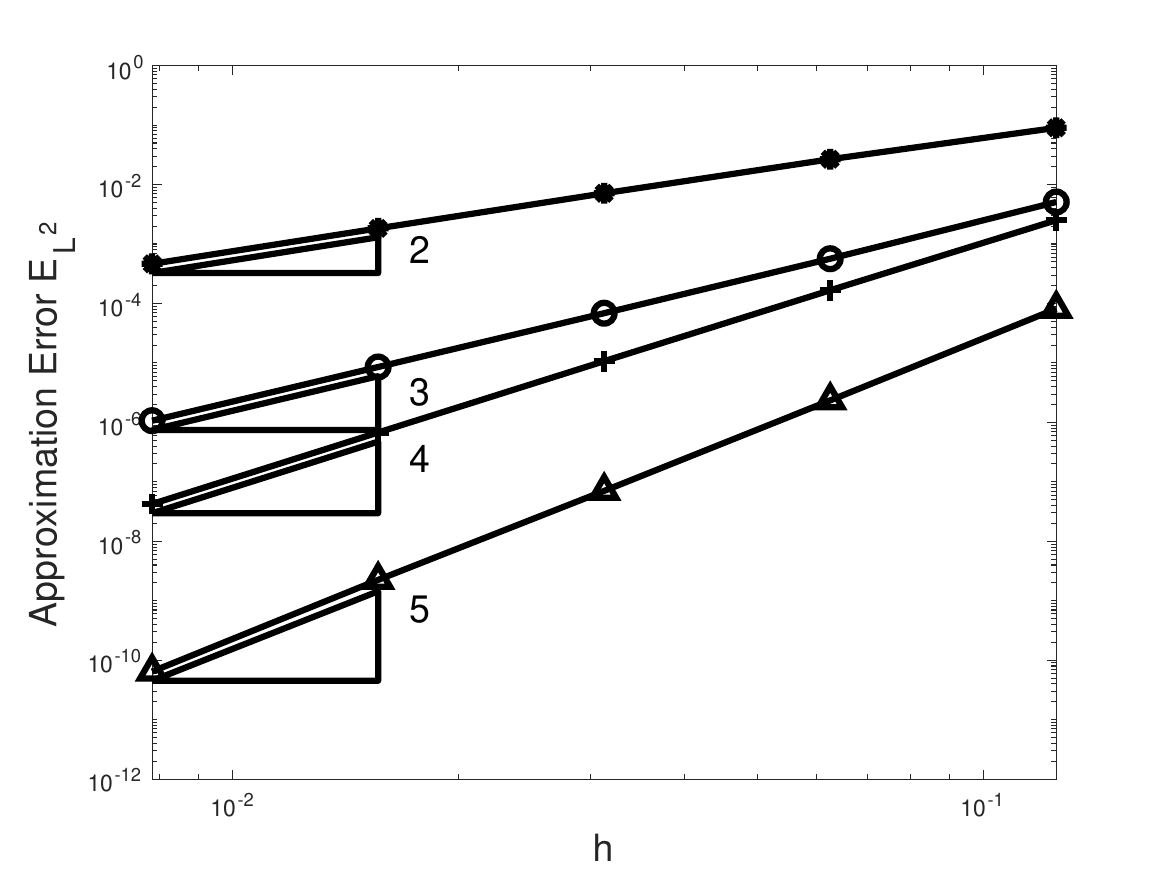}
    \caption{Left-panel: the convergence of~$E_{H^1}$. Right-panel: the convergence of~$E_{L^2}$. The exact solution is ~$u_1$.
We employ sequences of Voronoi meshes (first row)
and concave meshes (second row)
with decreasing mesh size are employed. The “orders” of the virtual element spaces are
~$k =1$,~$2$,~$3$, and~$4$.}
\label{figure:convergence-test-case-1}
\end{figure}
\subsection{Case 2: model problem on curved quadrilateral domain}

We consider  variable coefficients given by~\eqref{coefficients} of the other equation on a curved quadrilateral domain presented in~\cite{BeiraodaVeiga-Russo-Vacca:2019}
and defined as
\begin{equation} \label{eqn:sindomain}
  \Omega:=\{(x, y) ~\text{s.t}~ 0 < x < 1,~\text{and}~ g_1(x) < y < g_2(x)\},    
\end{equation}
where
\[
g_1(x):= \frac{1}{20}\sin(\pi x)\quad \text{and}\quad g_2(x):= 1 + \frac{1}{20}\sin(3\pi x).
\]

We represent the domain in Figure~\ref{figure:test-case-2domain}.
On such an~$\Omega$, we consider
the exact (analytic) solution
\[
u_2(x,y) =-(y-g_1(x))(y-g_2(x))(1-x)x(3+\sin(5x)\sin(7y)).
\]
The function~$u_2$ has homogeneous Dirichlet boundary conditions over~$\partial \Omega$.

\begin{figure}[H]
\begin{center}
\includegraphics[width=2.5in]{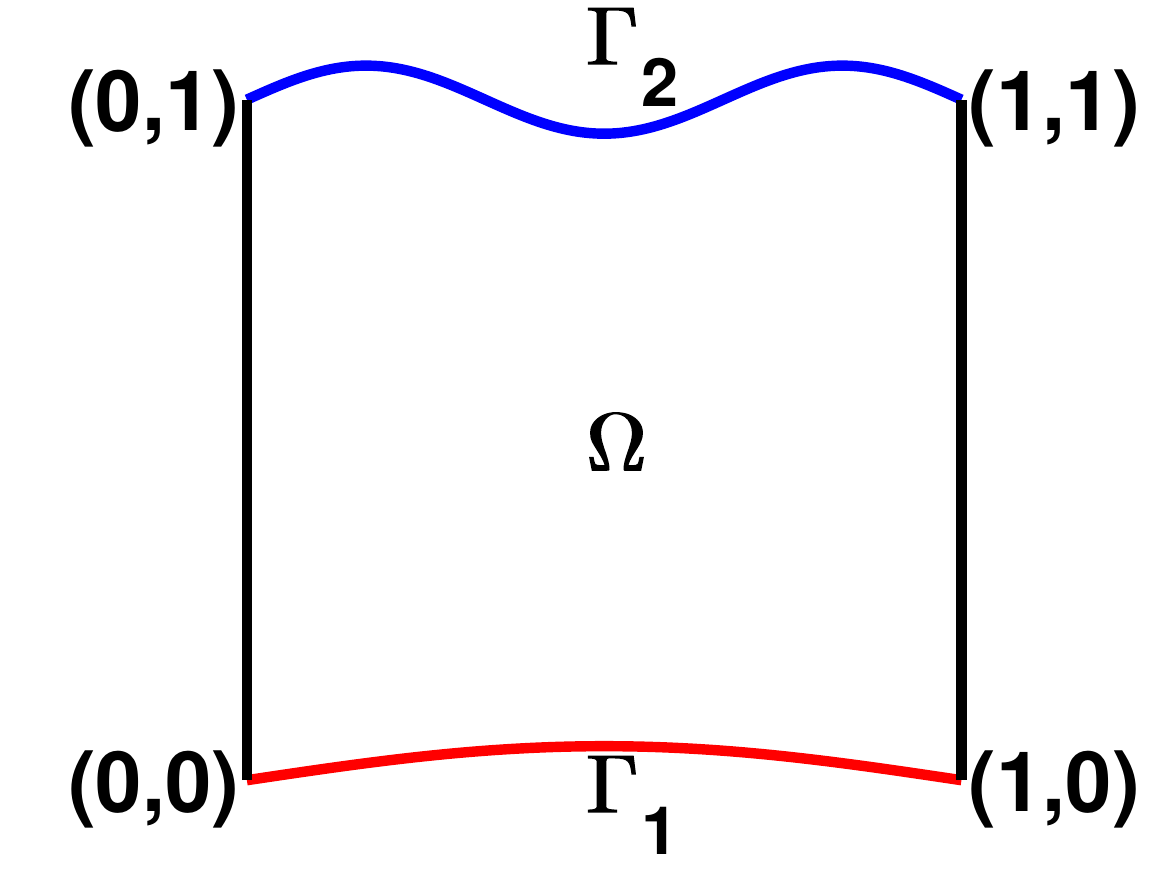}
\caption{Domain~$\Omega$ described in \eqref{eqn:sindomain}.}
\label{figure:test-case-2domain}
\end{center}
\end{figure}
\begin{figure}[H]
\begin{center}
\includegraphics[width=2.5in]{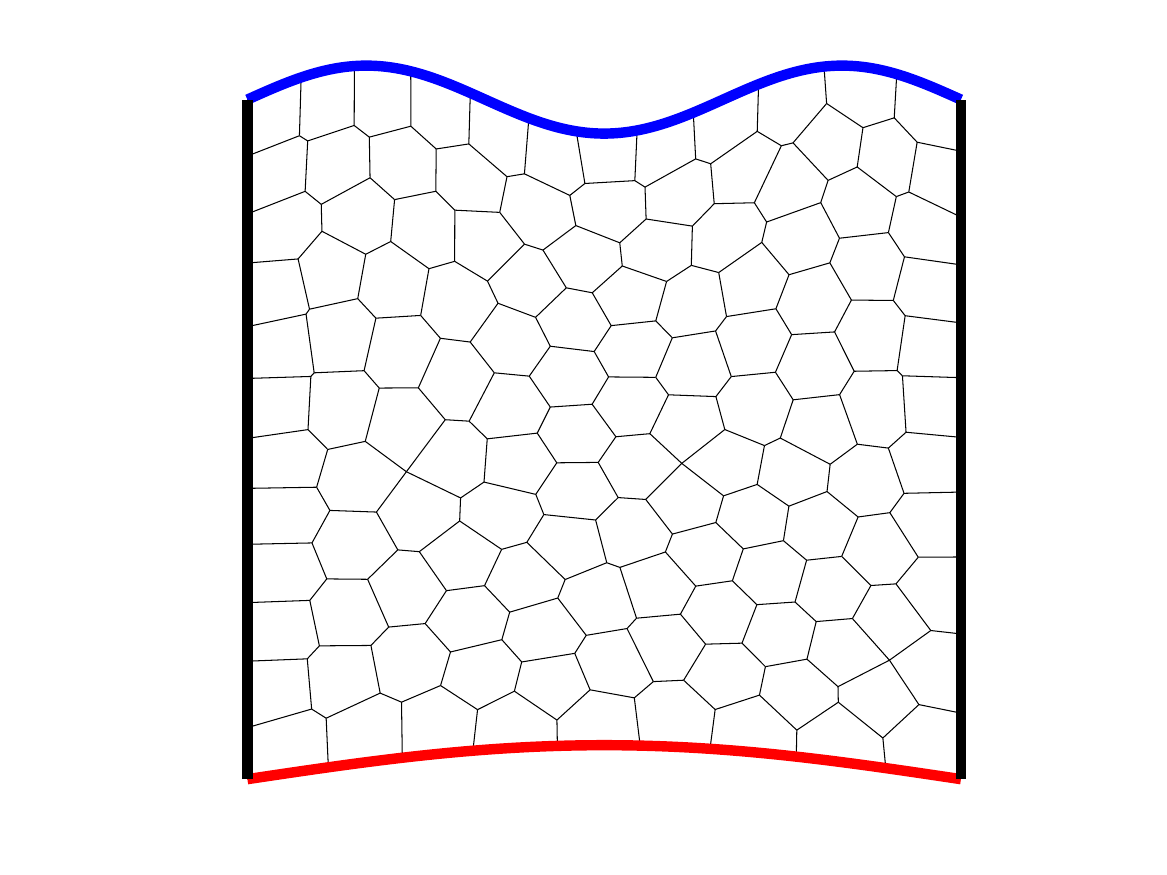}
\includegraphics[width=2.5in]{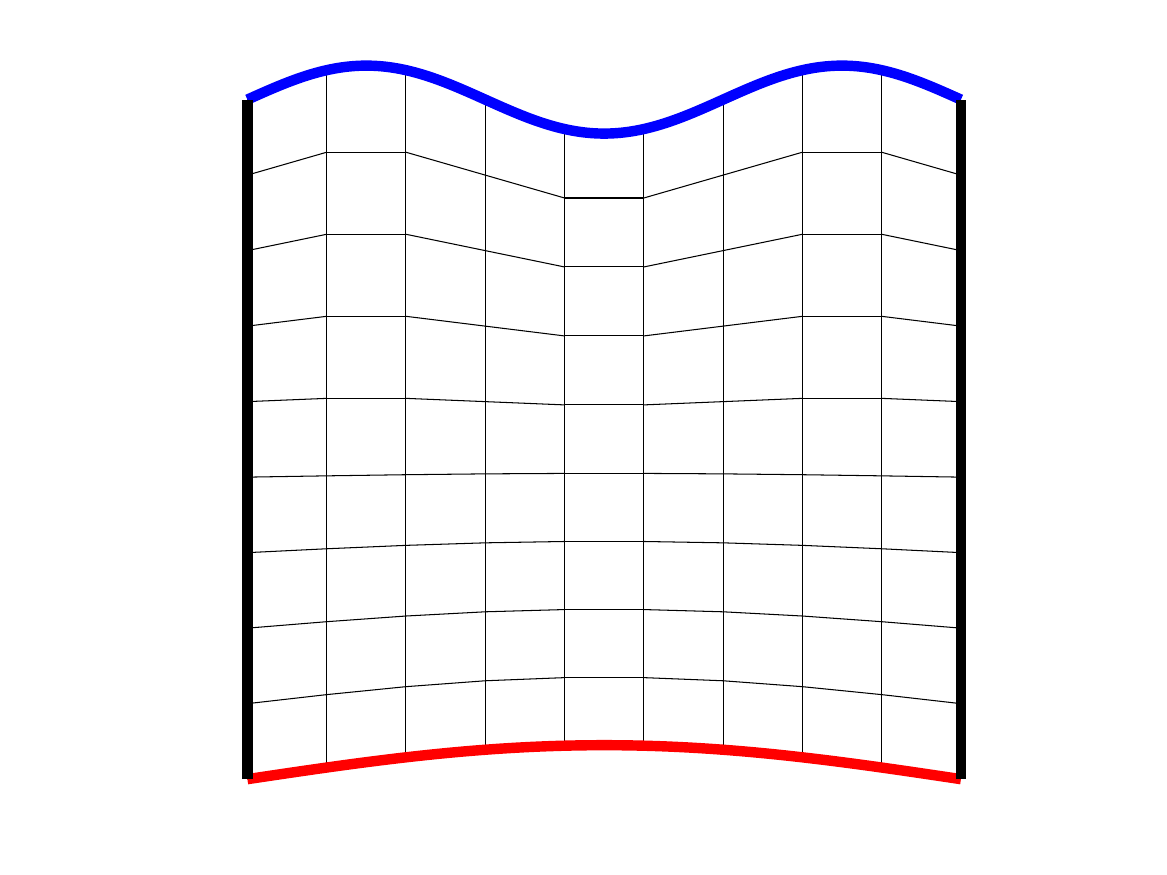}
\caption{Left-panel: an example of (curved) Voronoi mesh over~$\Omega$.
Right-panel: an example of (curved) quadrilateral mesh over~$\Omega$.}
\label{figure:test-case-2}
\end{center}
\end{figure}

The finite element partition on the curved domain~$\Omega$
is constructed starting from a mesh for the square $(0,1)^2$ and mapping the nodes accordingly to the following rule:

\[
(x_{\Omega},y_\Omega)=
\begin{cases}
(x_s,y_s+g_1(x_s)(1-2y_s)),   &\text{if}~ y_s \leq \frac12 ,\\
(x_s, 1-y_s+g_2(x_s)(2y_s-1), & \text{if}~ y_s \geq \frac12.
\end{cases}
\]
Above, $(x_s, y_s)$
denotes the mesh generic node on the square domain~$(0,1)^2$, and~$(x_\Omega,y_\Omega)$
denotes the associated node in the curved domain~$\Omega$.
The edges on the curved boundary consist of
an arc of~$\Gamma_1$ or~$\Gamma_2$, while all the internal edges are straight. 
In Figure~\ref{figure:test-case-2},
we display two examples of meshes,
namely, a (curved) Voronoi and a (curved) square mesh.

In Figure~\ref{figure:convergence-test-case-2},
we show the convergence of the two error quantities in~\eqref{eqn:computable-quantities}
on the given sequences of meshes under uniform mesh refinements
for ``orders'' $k=1$, $2$, $3$, and~$4$.

\begin{figure}[H]
\begin{center}
\includegraphics[width=2.5in]{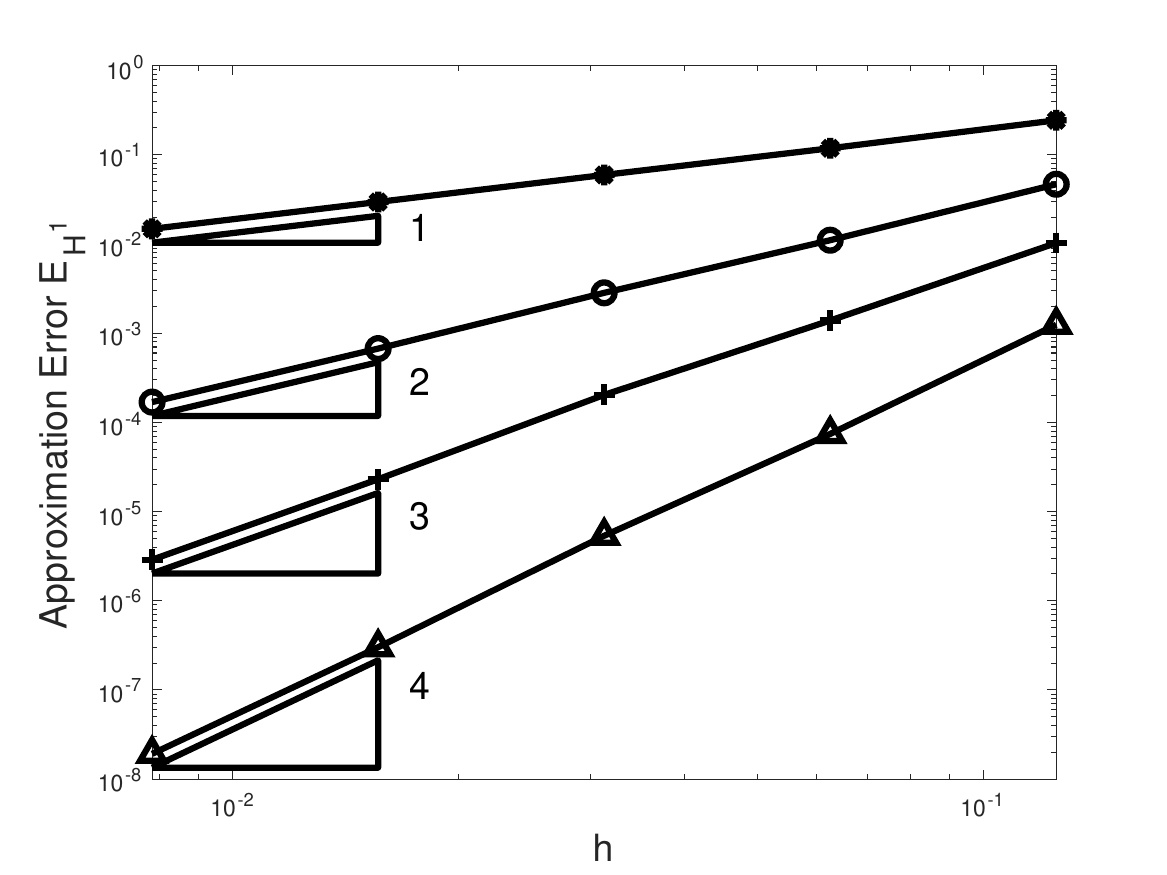} 
\includegraphics[width=2.5in]{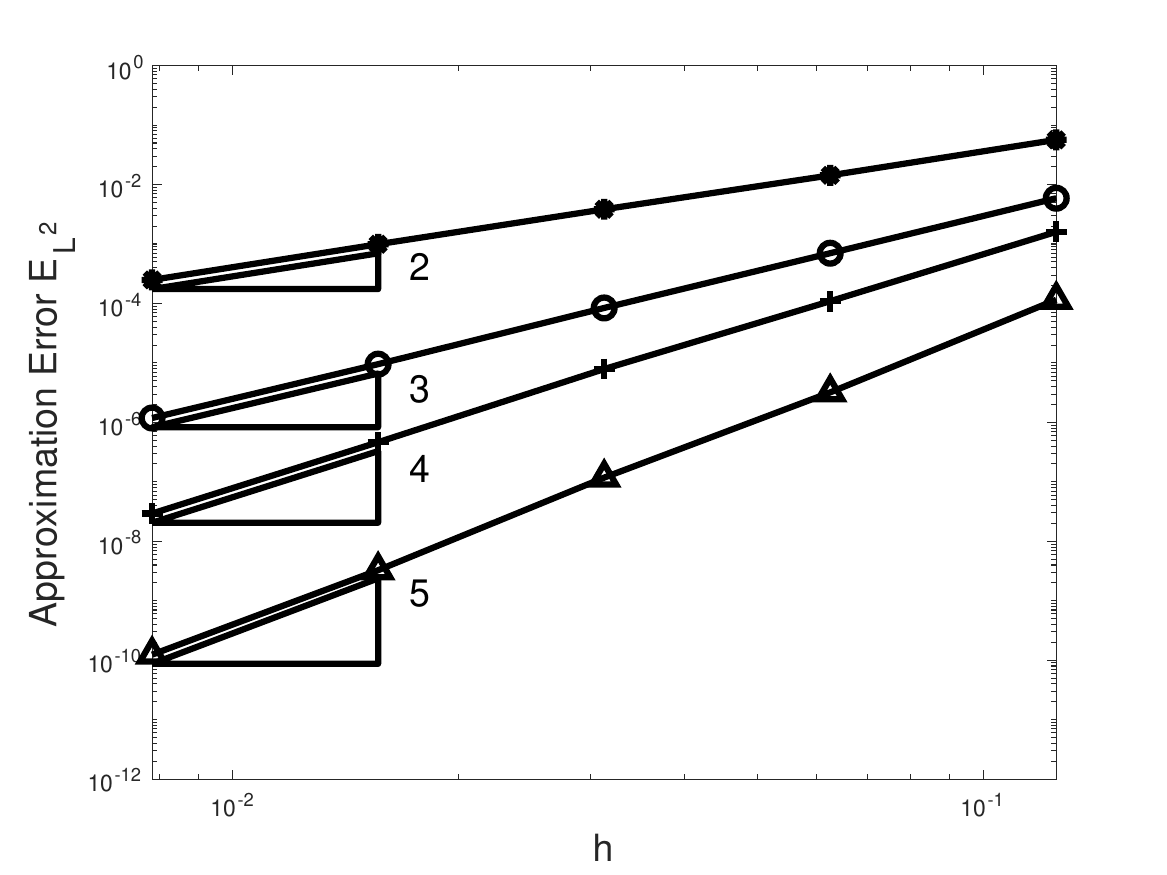}
\includegraphics[width=2.5in]{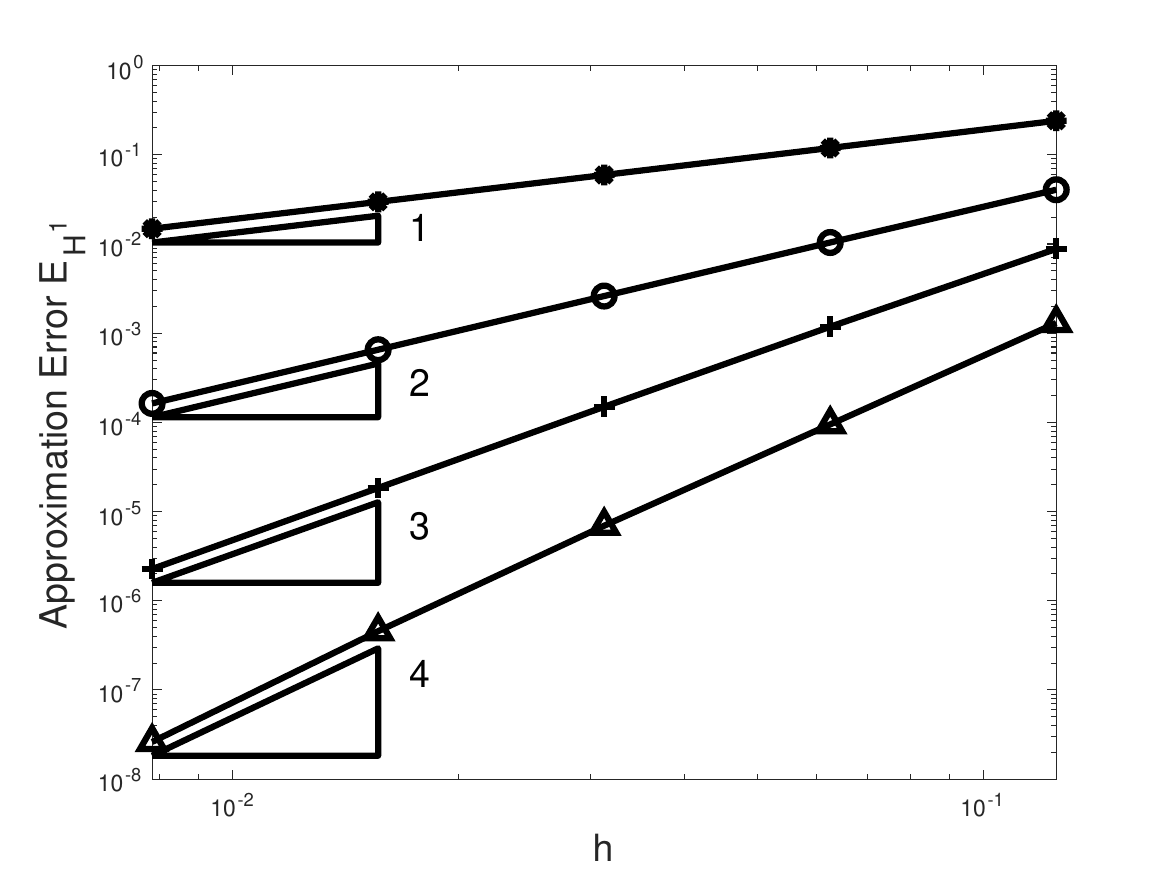} 
\includegraphics[width=2.5in]{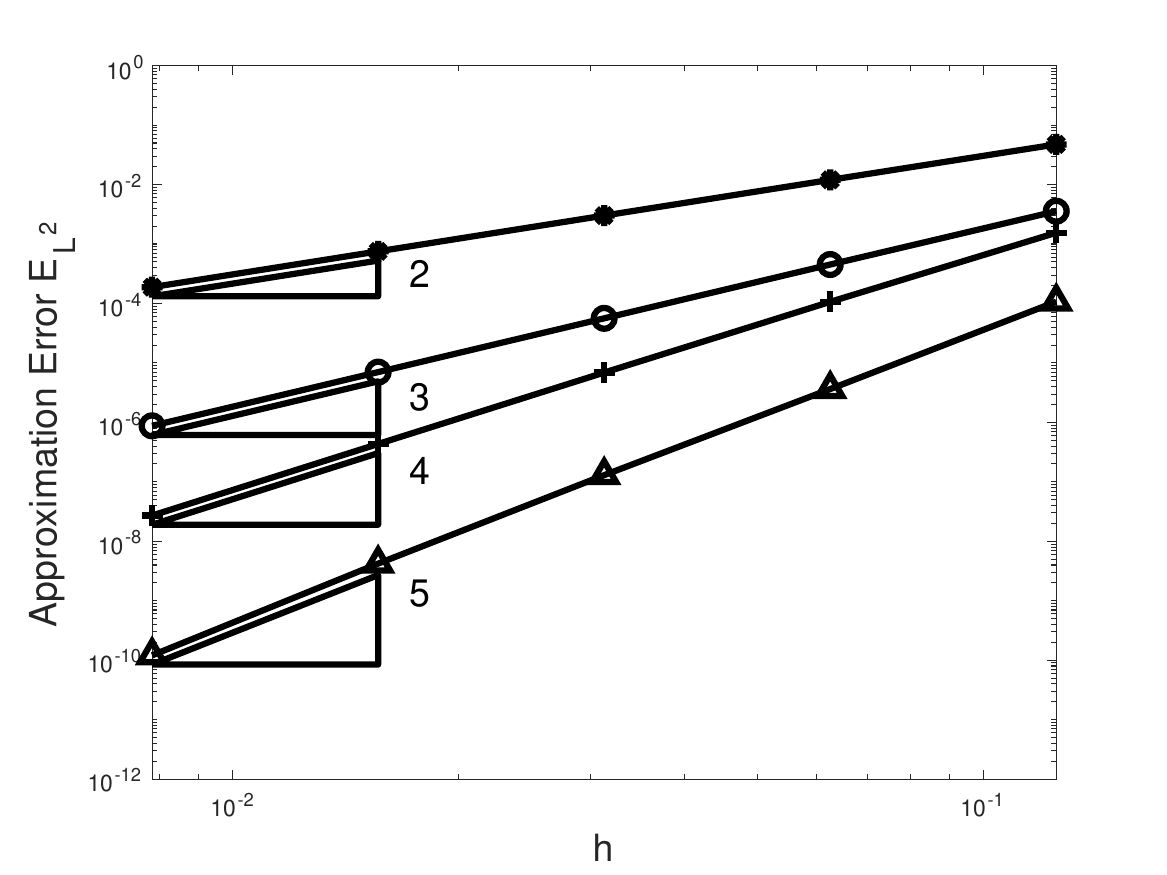}
\caption{\small{Left-panels: the convergence of~$E_{H^1}$.
Right-panels: the convergence of~$E_{L^2}$.
The exact solution is~$u_2$.
We employ sequences of (curved)} Voronoi meshes (first row)
and (curved) square meshes (second row)
with decreasing mesh size are employed.
The ``orders'' of the virtual element spaces are
$k=1$, $2$, $3$, and~$4$.}
\label{figure:convergence-test-case-2}
\end{center}
\end{figure}

\subsection{Case 3: model problem on domian with curved interface}
Here, we consider the general elliptic equation defined on a square~$(0,1)\times(-\frac12,\frac12)$ with  interface~$\Gamma$ given as~
$g_3(x): = \frac{1}{20}\sin(3\pi x)$ (Figure \ref{figure:test-case-3}).
 And then, the exact solution is designed to be
\[
 u_3 =\left\{\begin{aligned}
    &\frac{x(1-x)(y-g_3(x))(3+\sin(5x)\sin(7y))}{\kappa_1},\quad\forall x\in\Omega_1,\\
    &\frac{x(1-x)(y-g_3(x))(3+\sin(5x)\sin(7y))}{\kappa_2},\quad\forall x\in\Omega_2.
 \end{aligned}\right.
 \]
with the coefficients~$\bbm_3 = \bbm_1$,~$c_3 = c_1$~ and 
\[
 a_3(x,y)= \left\{ \begin{aligned}
    \kappa_1 a_1(x,y),\quad\forall x\in\Omega_1,\\
    \kappa_2 a_1(x,y),\quad\forall x\in\Omega_2.
\end{aligned}\right.
\]
\begin{figure}[H]
\begin{center}
\includegraphics[width=2.5in]{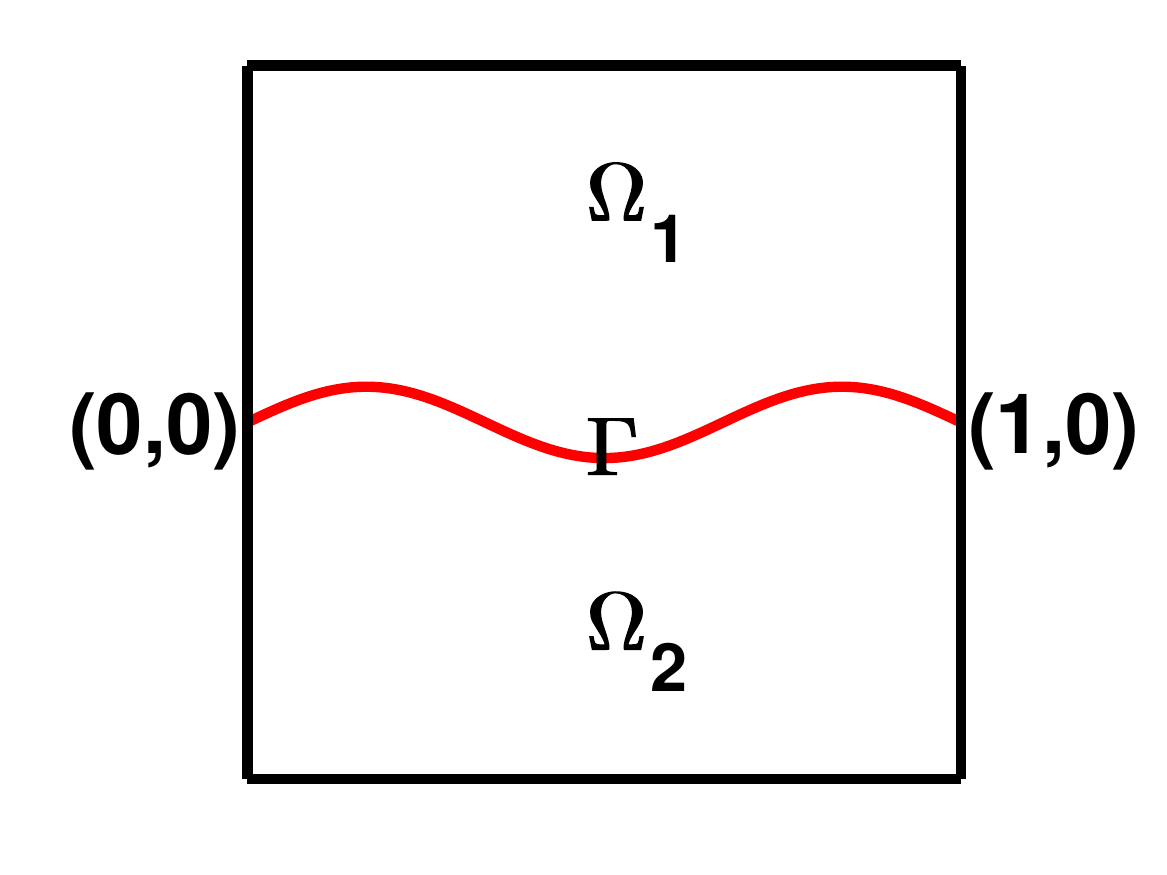}
\includegraphics[width=2.5in]{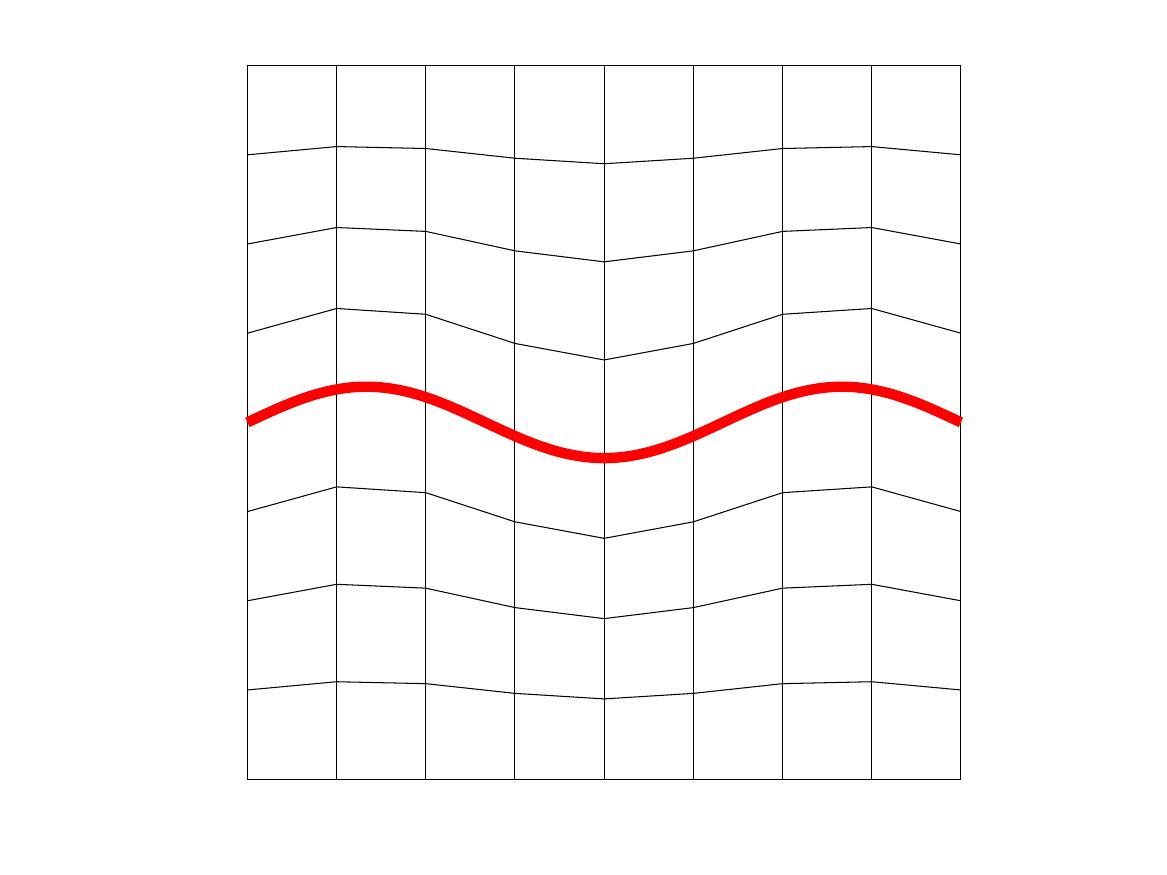}
\caption{Left-panel: the square with curved interface ~$\Gamma$.
Right-panel: an example of (curved) quadrilateral mesh over~$\Omega$.}
\label{figure:test-case-3}
\end{center}
\end{figure}

In Figure~\ref{figure:convergence-test-case-3},
we show the convergence of the two error quantities in~\eqref{eqn:computable-quantities}
on the given sequences of meshes under uniform mesh refinements
for ``orders'' $k=1$, $2$, $3$, and~$4$.

\begin{figure}[H]
\begin{center}
\includegraphics[width=2.5in]{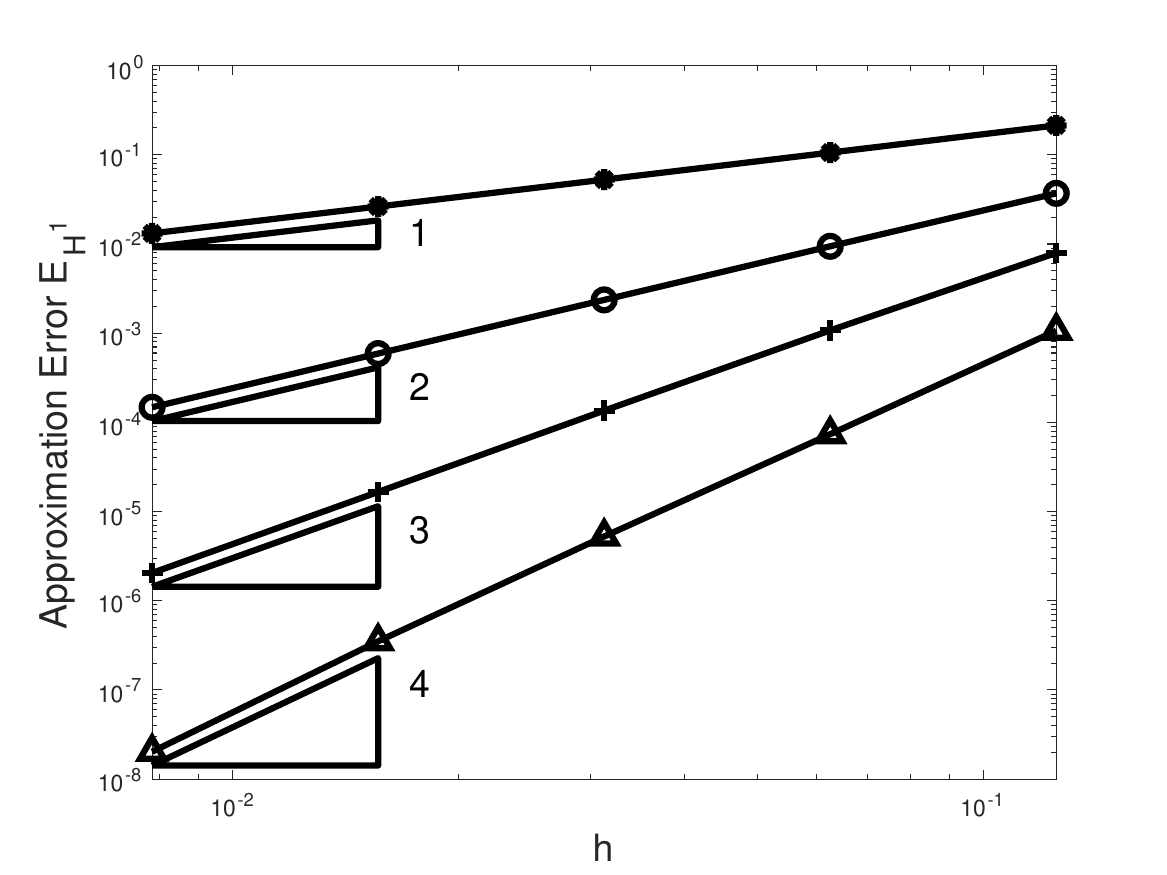} 
\includegraphics[width=2.5in]{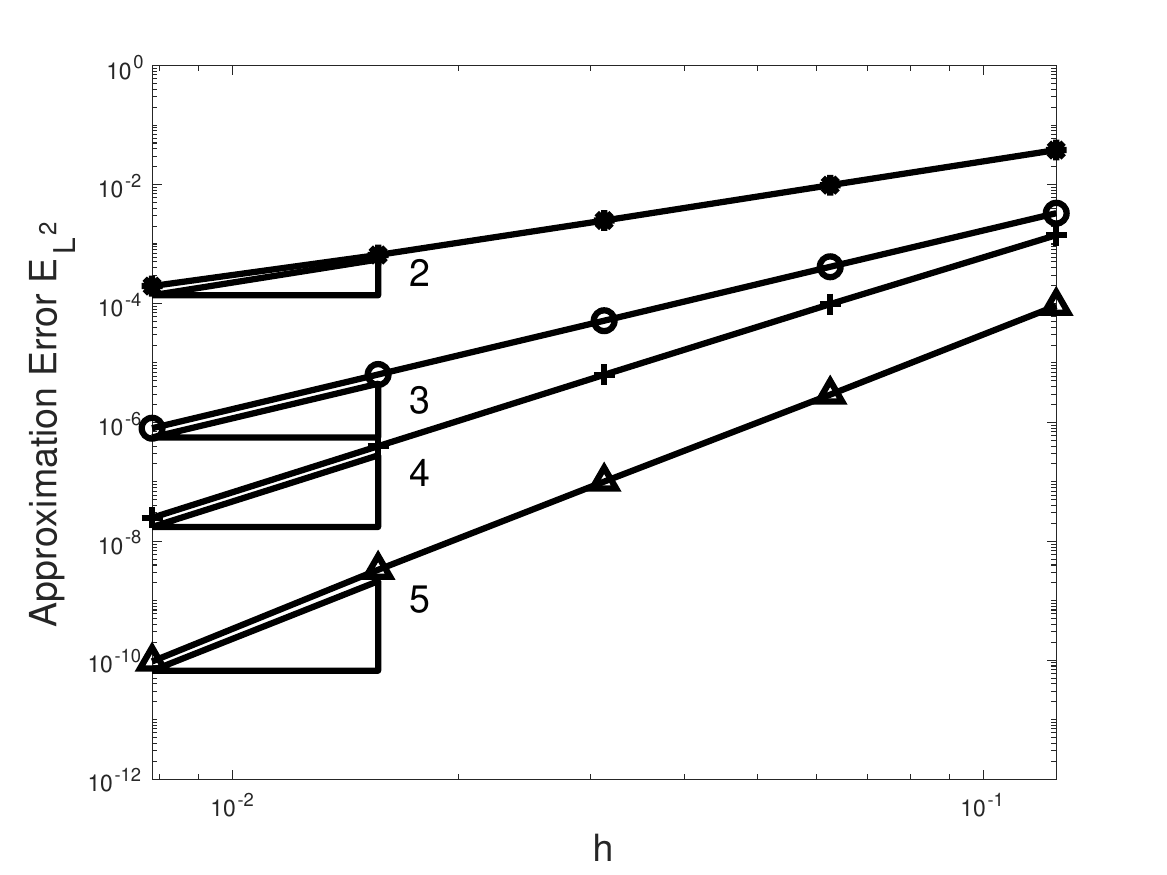}
\includegraphics[width=2.5in]{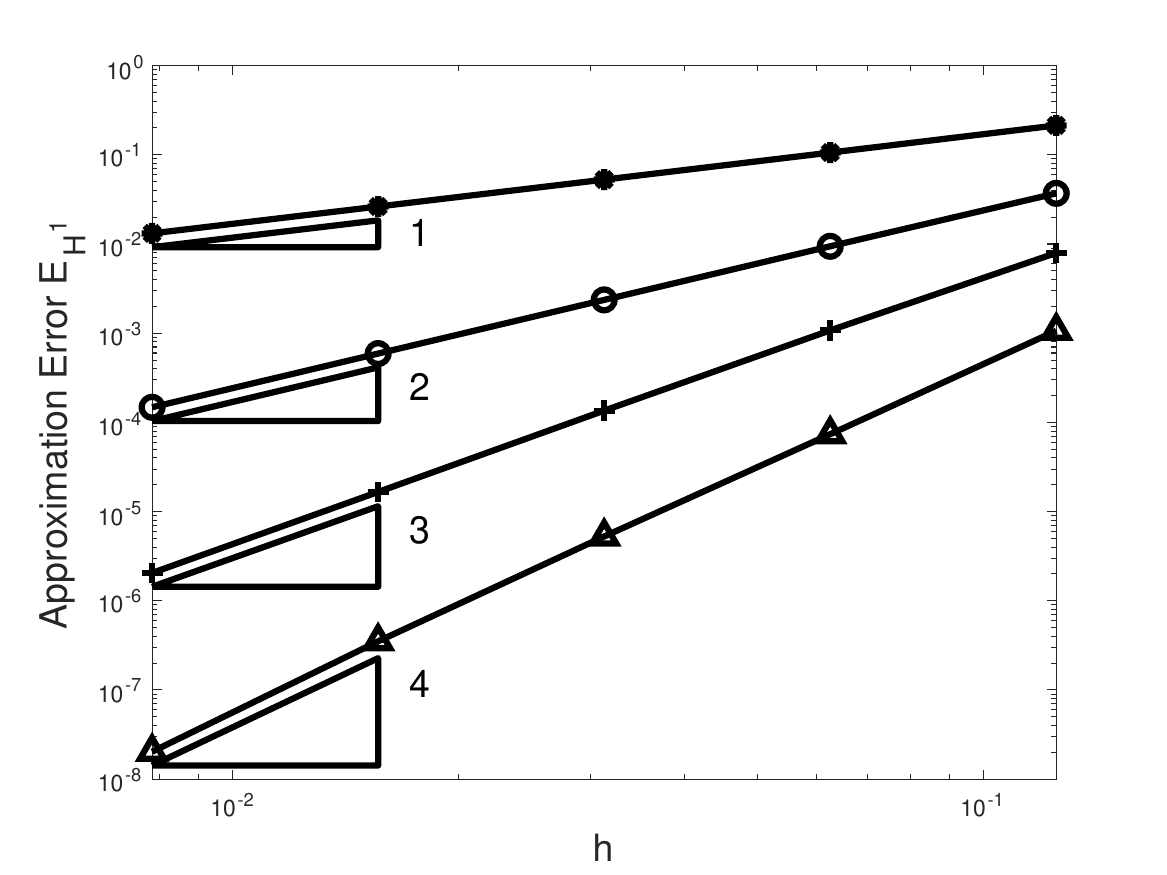} 
\includegraphics[width=2.5in]{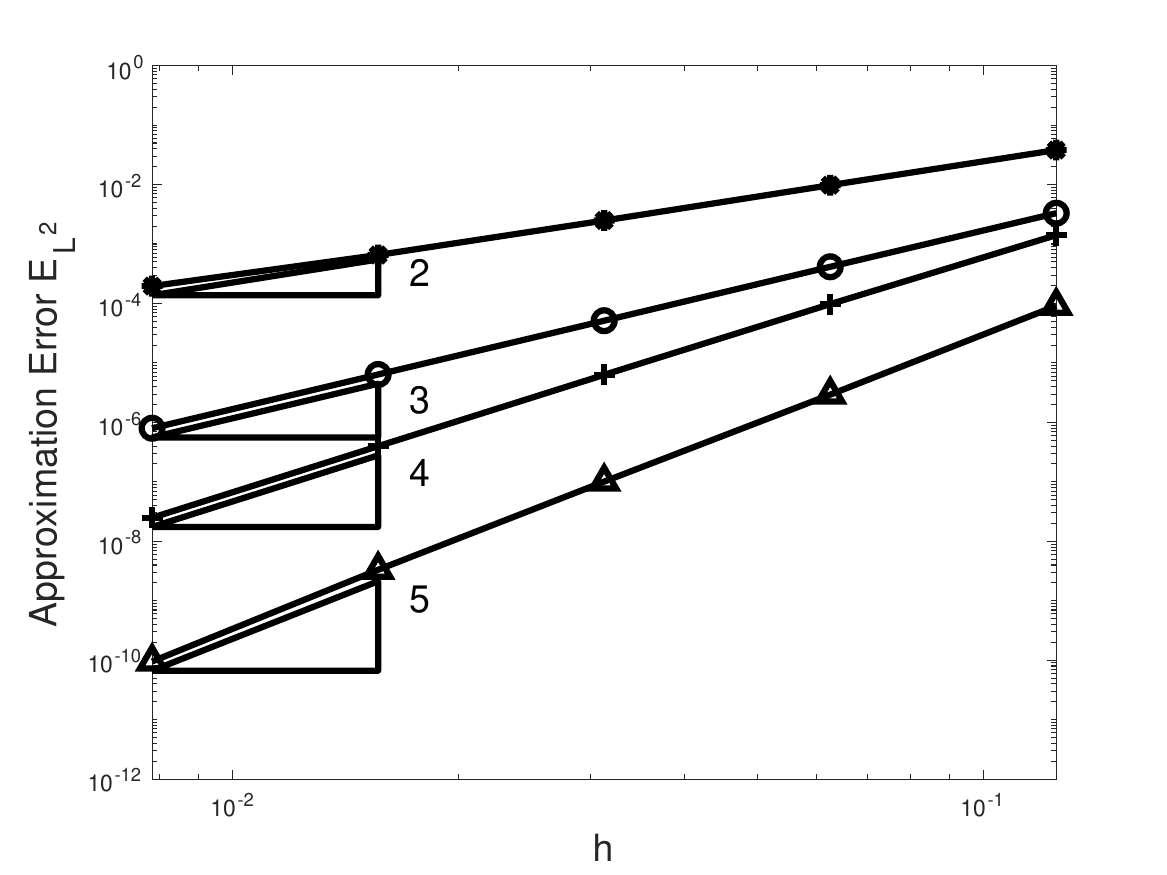}
\caption{Left-panels: the convergence of~$E_{H^1}$.
Right-panels: the convergence of~$E_{L^2}$.
The exact solution is~$u_2$.
We employ the coefficients~$\kappa_1 = 1$ and $\kappa_2 = 10^5$ (first row)
and ~$\kappa_1 = 10^5$ and $\kappa_2 = 1$ (second row)
on the quare meshes with decreasing mesh size are employed.
The ``orders'' of the virtual element spaces are
$k=1$, $2$, $3$, and~$4$.}
\label{figure:convergence-test-case-3}

\end{center}
\end{figure}
All numerical results are in excellent agreement with the theory established in this paper.
\bibliographystyle{plain}
{\footnotesize\bibliography{bibliography.bib}}

\begin{thebibliography}{10}

\bibitem{Adams:2003}
Robert~A Adams and John~JF Fournier.
\newblock {\em Sobolev spaces}.
\newblock Elsevier, 2003.

\bibitem{A-A-B-M-R:2013}
B.~Ahmad, F.~Alsaedi, A.and~Brezzi, L.~D. Marini, and A.~Russo.
\newblock Equivalent projectors for virtual element methods.
\newblock {\em Computers \& Mathematics with Applications}, 66(3):376--391,
  2013.

\bibitem{Anand-Ovall-Reynolds-Weisser:2020}
A.~Anand, J.~S. Ovall, S.~E. Reynolds, and S.~Wei{\ss}er.
\newblock Trefftz finite elements on curvilinear polygons.
\newblock {\em SIAM J. Sci. Comput.}, 42(2):A1289--A1316, 2020.

\bibitem{AyusodeDios-Lipnikov-Manzini:2016}
B.~Ayuso~de Dios, K.~Lipnikov, and G.~Manzini.
\newblock The nonconforming virtual element method.
\newblock {\em ESAIM Math. Model. Numer. Anal.}, 50(3):879--904, 2016.

\bibitem{BeiradaVeiga-Brezzi-Marini-Russo:2014}
L.~Beir\~ao~da Veiga, F.~Brezzi, L.~D. Marini, and A.~Russo.
\newblock The hitchhiker's guide to the virtual element method.
\newblock {\em Math. Models Methods Appl. Sci.}, 24(08):1541--1573, 2014.

\bibitem{Beirao-Liu-Mascotto-Russo:2023}
L.~Beir\~{a}o~da Veiga, L.~Yi, L.~Mascotto, and A.~Russo.
\newblock The nonconforming virtual element method with curved edges.
\newblock {\em J. Sci. Comput.}, 99(1), 2024.

\bibitem{BeiradaVeiga-Brezzi-Cangiani-Manzini-Marini-Russo:2013}
L.~Beir{\~a}o~da Veiga, F.~Brezzi, A.~Cangiani, G.~Manzini, L.~D. Marini, and
  A.~Russo.
\newblock Basic principles of virtual element methods.
\newblock {\em Math. Models Methods Appl. Sci.}, 23(01):199--214, 2013.

\bibitem{Beirao-Brezzi-Marini-Russo:2016}
L.~Beir{\~a}o~da Veiga, F.~Brezzi, L.~D. Marini, and A.~Russo.
\newblock Virtual element method for general second-order elliptic problems on
  polygonal meshes.
\newblock {\em Mathematical Models and Methods in Applied Sciences},
  26(04):729--750, 2016.

\bibitem{Beirao-Brezzi-Marini-Russo:2020}
L.~Beir{\~a}o~da Veiga, F.~Brezzi, L.~D. Marini, and A.~Russo.
\newblock Polynomial preserving virtual elements with curved edges.
\newblock {\em Mathematical Models and Methods in Applied Sciences},
  30(08):1555--1590, 2020.

\bibitem{BeiraodaVeiga-Russo-Vacca:2019}
L.~Beir{\~a}o~da Veiga, A.~Russo, and G.~Vacca.
\newblock The virtual element method with curved edges.
\newblock {\em ESAIM Math. Model. Numer. Anal.}, 53(2):375--404, 2019.

\bibitem{Bertoluzza-Pennacchio-Prada:2019}
S.~Bertoluzza, M.~Pennacchio, and D.~Prada.
\newblock High order {VEM} on curved domains.
\newblock {\em Atti Accad. Naz. Lincei Rend. Lincei Mat. Appl.},
  30(2):391--412, 2019.

\bibitem{Botti-DiPietro:2018}
L.~Botti and D.A. Di~Pietro.
\newblock Assessment of hybrid high-order methods on curved meshes and
  comparison with discontinuous {G}alerkin methods.
\newblock {\em J. Comput. Phys.}, 370:58--84, 2018.

\bibitem{Bramble-Dupont-Thomee:1972}
J.~H. Bramble, T.~Dupont, and V.~Thom{\'e}e.
\newblock Projection methods for {D}irichlet’s problem in approximating
  polygonal domains with boundary-value corrections.
\newblock {\em Math. Comp.}, 26(120):869--879, 1972.

\bibitem{Brenner-Scott:2008}
S.~C. Brenner and L.~R. Scott.
\newblock {\em The mathematical theory of finite element methods}, volume~3.
\newblock Springer, 2008.

\bibitem{Burman-Cicuttin-Delay-Ern:2021}
E.~Burman, M.~Cicuttin, G.~Delay, and A.~Ern.
\newblock An unfitted hybrid high-order method with cell agglomeration for
  elliptic interface problems.
\newblock {\em SIAM J. Sci. Comput.}, 43(2):A859--A882, 2021.

\bibitem{Burman-Ern:2019}
E.~Burman and A.~Ern.
\newblock A cut cell hybrid high-order method for elliptic problems with curved
  boundaries.
\newblock In {\em European Conference on Numerical Mathematics and Advanced
  Applications}, pages 173--181. Springer, 2019.

\bibitem{cangiani-Manzini-Sutton:2017}
A.~Cangiani, G.~Manzini, and O.~J. Sutton.
\newblock Conforming and nonconforming virtual element methods for elliptic
  problems.
\newblock {\em IMA Journal of Numerical Analysis}, 37(3):1317--1354, 2017.

\bibitem{Cottrell-Hughes-Bazilevs:2009}
J.~A. Cottrell, T.~J.~R. Hughes, and Y.~Bazilevs.
\newblock {\em Isogeometric analysis: toward integration of {CAD} and {FEA}}.
\newblock John Wiley \& Sons, 2009.

\bibitem{Dassi-Fumagalli-Losapio-Scialo-Scotti-Vacca:2021}
F.~Dassi, A.~Fumagalli, D.~Losapio, S.~Scial{\`o}, A.~Scotti, and G.~Vacca.
\newblock The mixed virtual element method on curved edges in two dimensions.
\newblock {\em Comput. Methods Appl. Mech. Engrg.}, 386:114098, 2021.

\bibitem{Dassi-Fumagalli-Scotti-Vacca:2022}
F.~Dassi, A.~Fumagalli, A.~Scotti, and G.~Vacca.
\newblock Bend {3D} mixed virtual element method for {D}arcy problems.
\newblock {\em Comput. Math. Appl.}, 119:1--12, 2022.

\bibitem{Dong-Ern:2022}
Z.~Dong and A.~Ern.
\newblock Hybrid high-order and weak {G}alerkin methods for the biharmonic
  problem.
\newblock {\em SIAM J. Numer. Anal.}, 60(5):2626--2656, 2022.

\bibitem{Ergatoudis-Irons-Zienkiewicz:1968}
I.~Ergatoudis, B.~M. Irons, and O.~C. Zienkiewicz.
\newblock Curved, isoparametric, ``quadrilateral'' elements for finite element
  analysis.
\newblock {\em Int. J. Solids Struct.}, 4(1):31--42, 1968.

\bibitem{Frittelli-Madzvamuse-Sgura:2021}
M.~Frittelli, A.~Madzvamuse, and I.~Sgura.
\newblock Bulk-surface virtual element method for systems of {PDE}s in
  two-space dimensions.
\newblock {\em Numer. Math.}, 147(2):305--348, 2021.

\bibitem{Frittelli-Sgura:2018}
M.~Frittelli and I.~Sgura.
\newblock Virtual element method for the {L}aplace-{B}eltrami equation on
  surfaces.
\newblock {\em ESAIM Math. Model. Numer. Anal.}, 52(3):965--993, 2018.

\bibitem{Gurkan-SalaLardies-Kronbichler-FernandezMandez:2016}
C.~G{\"u}rkan, E.~Sala-Lardies, M.~Kronbichler, and
  S.~Fern{\'a}ndez-M{\'e}ndez.
\newblock e{X}tended {H}ybridizable {D}iscontinous {G}alerkin ({X-HDG}) for
  void problems.
\newblock {\em J. Sci. Comput.}, 66(3):1313--1333, 2016.

\bibitem{Lenoir:1986}
M.~Lenoir.
\newblock Optimal isoparametric finite elements and error estimates for domains
  involving curved boundaries.
\newblock {\em SIAM J. Numer. Anal.}, 23(3):562--580, 1986.

\bibitem{liu-chen-wang:2022}
Y.~Liu, W.~Chen, and Y.~Wang.
\newblock A weak {G}alerkin mixed finite element method for second order
  elliptic equations on 2{D} curved domains.
\newblock {\em Commun. Comput. Phys.}, 32(4):1094--1128, 2022.

\bibitem{Schatz:1974}
A.~H. Schatz.
\newblock An observation concerning {R}itz-{G}alerkin methods with indefinite
  bilinear forms.
\newblock {\em Mathematics of Computation}, 28(128):959--962, 1974.

\bibitem{Stein:1970}
E.M. Stein.
\newblock {\em Singular integrals and differentiability properties of
  functions}, volume~2.
\newblock Princeton University Press, 1970.

\bibitem{Strang-Berger:1973}
G.~Strang and A.~E. Berger.
\newblock The change in solution due to change in domain.
\newblock In {\em Partial differential equations ({P}roc. {S}ympos. {P}ure
  {M}ath., {V}ol. {XXIII}, {U}niv. {C}alifornia, {B}erkeley, {C}alif., 1971)},
  pages 199--205. Amer. Math. Soc., Providence, R.I., 1973.

\bibitem{Thomee:1973}
V.~Thom{\'e}e.
\newblock Polygonal domain approximation in {D}irichlet's problem.
\newblock {\em IMA J. Appl. Math.}, 11(1):33--44, 1973.

\bibitem{Yemm:2023}
L.~Yemm.
\newblock A new approach to handle curved meshes in the hybrid high-order
  method.
\newblock {\em Foundations of Computational Mathematics}, pages 1615--3383,
  2023.

\end{thebibliography}

\end{document}